\newcommand{\N}{\ensuremath{\mathbb N}}
\newcommand{\Z}{\ensuremath{\mathbb Z}}
\newcommand{\R}{\ensuremath{\mathbb R}}
\newcommand{\Q}{\ensuremath{\mathbb Q}}
\newcommand{\C}{\ensuremath{\mathbb C}}
\newcommand{\xbar}{\overline{x}}
\newcommand{\ybar}{\overline{y}}
\theoremstyle{plain}		
	\newtheorem{theorem}{Theorem}[section]
	\newtheorem{prop}[theorem]{Proposition}
	\newtheorem{cor}[theorem]{Corollary}
     \newtheorem{lemma}[theorem]{Lemma}
	\newtheorem{definition}[theorem]{Definition}
\theoremstyle{remark}		
	\newtheorem*{remark}{Remark}
\numberwithin{equation}{section}
\begin{document}

\title{Beyond endoscopy for the Rankin-Selberg L-function}
\author{P. Edward Herman} 
\address{American Institute of Mathematics, 360 Portage Ave.,
Palo Alto, CA 94306-2244}
\email{peherman@aimath.org}
\thanks{This research was supported by an NSF Mathematical Sciences Research Institutes Post-Doc and by the American Institute of Mathematics.}
\begin{abstract}
We try to understand the poles of L-functions via taking a limit in a trace formula. This technique avoids endoscopic and Kim-Shahidi methods. In particular, we investigate the poles of the Rankin-Selberg L-function. Using analytic number theory techniques to take this limit, we essentially get a new proof of the analyticity of the Rankin-Selberg L-function at $s=1.$ Along the way we discover the convolution operation for Bessel transforms. \end{abstract}
\maketitle

\section{Introduction}

In this paper we present further calculations using Langlands' beyond
endoscopy idea. We roughly describe this concept here. Take a
 cuspidal holomorphic or Maass form
$\phi$ and an associated L-function $L(s)=\sum_{n=1}^\infty
\frac{b_n(\phi)}{n^s},$
 where $b_n(\phi)$ are associated complex parameters defined by the dual group. The focus is on $$\lim_{X \to \infty} \frac{1}{X}
 \sum_{n \leq X} b_n(\phi).$$
The limit is the residue of the pole of the associated L-function at
$s=1.$ As it is difficult to study just one form in this way, we
rather study this limit as we sum over all modular forms. This
allows us to use the trace formula. Summing then over the spectrum
$\phi$ along with the averaging of $b_n(\phi)$ will ``detect" the
associated L-functions that have poles.

    We provide motivation for our work with a summary of Langlands
    original idea in \cite{Lan04}.

\section{Langlands' Beyond Endoscopy}

        Let $\mathbb{A_Q}$ be the ring of adeles of $\Q,$ and $\pi$ be
    an automorphic cuspidal representation of
    $GL_2(\mathbb{A_Q}).$ We define $m(\pi,\rho)$ to be the order of the
    pole at $s=1$ of $L(s,\pi,\rho),$ where $\rho$ is a
    representation of the dual group $GL_2(\C).$

        Langlands proposes the study of \begin{equation} \label{p_sum} \lim_{X \to \infty} \sum_\pi \frac{1}{X}
        tr(\pi)(f)\sum_{p\leq X} \log (p) a(p,\pi,\rho).\end{equation} Here $f$
        is a nice test function on $GL_2(\mathbb{A_Q}),$ and
        $tr(\pi)(f)$ is the trace of the operator defined by $f$
        on $\pi.$ $a(p,\pi,\rho)$ is the $p-$th Dirichlet
        coefficient of $L(s,\pi,\rho).$ The quantity $$\lim_{X \rightarrow
        \infty}\frac{1}{X}\sum_{p\leq X} \log (p) a(p,\pi,\rho),$$
        is equal to $m(\pi,\rho).$

            Therefore, summing over the range of representations
            $\pi$  will "detect" the ones which have nontrivial multiplicity. The tool used to study this sum
            over the spectrum of forms $\pi$ is the trace formula.
            Ultimately, one gets from use of the trace formula a sum over
            primes and conjugacy classes, and hopes by analytic
            number theory techniques to take the limit. One hopes
            that after getting the limit, one can decipher and
            construct the L-functions having non-trivial
            multiplicity of the pole at $s=1.$ 
            Sarnak addresses (\ref{p_sum}) in \cite{Sar} for $\rho$
            the standard representation. He points out that such a
            computation can be done, but the tools used for the
            study of sums of primes is limited, and this problem
            is perhaps more tractable if rather studied over the
            sum of integers.

For the standard $L$-function the idea then is to evaluate \begin{equation} \label{nsum} \lim_{X
\to \infty} \sum_\pi \frac{1}{X}
        tr(\pi)(f)\sum_{n\leq X}  a(n,\pi,\rho).\end{equation}
        This should "detect," rather than the multiplicities of the
        poles, the residue of the poles of the associated
        L-functions. We do this because the trace formula with the
        easiest analytic application for $GL_2$ is the Kuznetsov trace
        formula, and the sum over integers compliments such a
        limit.

        Rather than use the adelic language, we use the classic
        Petersson-Kuznetsov trace formula. Then for the standard $L$-function, Sarnak \cite{Sar} showed, up to some weight factors needed in the trace formula,  $$ \sum_{n \leq X}g(n/X) \sum_f a_n(f)=O(X^{-A})$$ for any $A>0.$ This is equivalent to $L(s,f)=\sum_{n=1}^{\infty}\frac{a_n(f)}{n^s}$ being entire. Here $g\in C_0^{\infty}(\R^{+})$ and $a_n(f)$ are normalized Fourier coefficients of the cusp form $f,$ and the spectral sum ranges over an orthonormal
basis of holomorphic and Maass forms of a certain level and
nebentypus.
     Further work was done by Venkatesh (\cite{Venk1},\cite{Venk2}) for the symmetric square $L$-function. There the focus was taking the limit for $$\lim_{X \to \infty} \frac{1}{X} \sum_{n \leq X} g(n/X) \sum_f a_{n^2}(f).$$ He showed the symmetric square $L$-function of a cusp form has a pole if it is induced from a Hecke character over a quadratic field. 
     We go further and compute  $$\lim_{X \to \infty} \frac{1}{X} \sum_{n \leq X}  \sum_g \sum_f a_{n}(f)a_n (g).$$ This would be inspecting the poles of the Rankin-Selberg $L$-function. From the Rankin-Selberg
theory we expect some L-functions to have a pole at $s=1.$ Namely,
we expect the L-functions $L(s,\phi \times \bar{\phi})$, where
$\bar{\phi}$ is the modular form with conjugate Fourier
coefficients to $\phi$ to have a pole.

\section{Beyond endoscopy for the Rankin-Selberg L-function}
        Using a beyond endoscopic
approach, we want to show only those $L(s,\phi \times \bar{\phi})$ associated
to forms $\phi$ remain.

At the heart of our study is the limit of a product of Kuznetzov
trace formulas. Given a smooth function $V$ on $\R^{+}$ of compact
support, and positive integers $n, l$, we consider the Kuznetzov
trace formula
\begin{equation}\label{eq:kuzdef}
K_{n,l}(V): =S_{n,l}(V) +C_{n,l}(V),\end{equation}where $$S_{n,l}(V):=\sum_{\phi}
h(V,\lambda_{\phi})a_n(\phi)\overline{a_l(\phi)},$$
$$C_{n,l}(V):=\frac{1}{4\pi}
\int_{-\infty}^\infty h(V,t)\eta(n,1/2
+it)\overline{\eta(l,1/2+it)}dt.$$  
Here $h(V,\lambda)$ is a certain transform of $V$, and $a_n(\phi)$
are normalized Fourier coefficients of a form $\phi,$ which is
either a holomorphic or Maass form. The term $\eta(l,1/2+it)$ is a normalized divisor function as in \cite{Iw}. These are normalized Fourier coefficients of the Eisenstein series. We will define the technical
details of this sum in more detail in section 4.

Suppose that $W$ is a second function of the same type as $V$, and
let $g\in C_0^{\infty}(\R^{+})$ be a function satisfying
$\int_0^\infty g(t)dt=1$.

We shall study the following limit:
\begin{equation}\tag{L}\lim_{X \to \infty} \frac{1}{X} \sum_{n \in \Z}
g(n/X) K_{n,l}(V)  K_{n,l'}(W).
\end{equation}
 Define
\begin{equation} V*W(z):=  \int_{-\infty}^{\infty}
\int_{-\infty}^{\infty}
  \exp \left(z\frac{i}{2}(
 \frac{x}{y}+\frac{y}{x})\right) \exp \left( (\frac{1}{z})\frac{8\pi^2i}{xy}\right)\times \end{equation} $$V(\frac{4\pi
}{x})  W(\frac{4\pi}{ y})\frac{dx}{x} \frac{dy}{y},
$$
then we prove that $V*W$ is the convolution operation for Bessel transforms. That is, $\lambda_{\phi}$ is the archimedean
parameter associated to a form $\phi,$ and
\begin{equation}
 h(V,\lambda)
 := \left\{ \begin{array}{ll}
         i^{k}\int_0^\infty V(x)J_{\lambda -1}(x)x^{-1}dx & \text{if } \lambda  \in  2 \Z; \medskip \\
         \int_0^\infty
V(x)B_{2i\lambda}(x)x^{-1}dx & \text{if } \lambda \in
\R-2\Z.\end{array} \right.  \end{equation} Here, $B_{2it}(x) = (
\text{2 sin}(\pi it))^{-1}(J_{-2it}(x) - J_{2it}(x)),$ where
$J_\mu(x)$ is the standard $J$-Bessel function of index $\mu$ (See
\cite{IK} and \cite{Wat}).We call it the $B$-Bessel function.
\begin{theorem} \label{main theorem} For all $V, W$ as above, $h(V*W,t)=C_{t} h(V,t)h(W,t),$ where $C_{t}=2\pi$ for $t$ an even integer, and $C_{t}=\pi$ for $t$ purely imaginary.
\end{theorem}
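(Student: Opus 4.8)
The plan is to reduce the claimed identity to a single pointwise product formula for the Bessel kernel. Writing out $h(V*W,t)=\int_0^\infty (V*W)(z)\,B_{2it}(z)\,\frac{dz}{z}$ (with the obvious substitution $B_{2it}\mapsto i^{k}J_{t-1}$ in the even-integer case), inserting the definition of $V*W$, and interchanging the order of integration, I would first pass to the variables $u=4\pi/x$, $v=4\pi/y$. Since $V$ and $W$ are supported on $\R^{+}$ this confines $x,y$ to the positive reals and simplifies the two phases to $\frac{iz}{2}\cdot\frac{u^{2}+v^{2}}{uv}$ and $\frac{i}{2z}uv$. The theorem then follows once one shows that the inner integral
\[
I(u,v):=\int_0^\infty \exp\!\left(\frac{iz}{2}\,\frac{u^{2}+v^{2}}{uv}+\frac{i\,uv}{2z}\right)B_{2it}(z)\,\frac{dz}{z}
\]
equals $C_t\,B_{2it}(u)\,B_{2it}(v)$, because the surviving $u,v$ integrations then reassemble exactly into $C_t\,h(V,t)h(W,t)$.

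The heart of the argument is the evaluation of $I(u,v)$. I would insert a Fourier/Mehler--Sonine integral representation of the Bessel kernel, expressing $B_{2it}(z)$ as an angular average of $e^{iz\sin\theta}$. The $z$-integral then becomes the elementary Weber--Sonine exponential integral $\int_0^\infty \exp(iaz+ib/z)\frac{dz}{z}=i\pi H_0^{(1)}(2\sqrt{ab})$, with $a=\frac{1}{2}\frac{u^{2}+v^{2}}{uv}+\sin\theta$ and $b=\frac{uv}{2}$; a short computation gives $2\sqrt{ab}=\sqrt{u^{2}+v^{2}+2uv\sin\theta}$, the law-of-cosines distance. I would then apply Graf's (Gegenbauer's) addition theorem to expand $H_0^{(1)}\big(\sqrt{u^{2}+v^{2}+2uv\sin\theta}\big)$ as $\sum_m \mathcal{C}_m(u)J_m(v)\,i^{m}e^{im\theta}$, and carry out the $\theta$-integration: its orthogonality collapses the sum to the single index matching $2it$, leaving a product of one Bessel function in $u$ and one in $v$.

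The two cases then separate, and the imaginary case is the main obstacle. For the even-integer (holomorphic) case the index is integral, the classical representation $J_\nu(z)=\frac{1}{2\pi}\int_{-\pi}^{\pi}e^{i(z\sin\theta-\nu\theta)}\,d\theta$ applies verbatim, and the angular orthogonality is an exact $2\pi\delta$-selection, producing $C_t=2\pi$. In the purely imaginary case the index of $B_{2it}$ is non-integral, the naive angular representation fails, and one must use the full Mehler--Sonine formula including its correction term $\int_0^\infty e^{-z\sinh\xi}\,d\xi$; the angular integral no longer truncates to a single term, and I expect it to be precisely the antisymmetric combination $J_{-2it}-J_{2it}$ defining $B_{2it}$ that forces the spurious contributions (and the $Y$-parts hidden in $H_0^{(1)}$) to cancel, collapsing the expansion to the symmetric product $B_{2it}(u)B_{2it}(v)$ with the halved constant $C_t=\pi$. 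Tracking those cancellations and pinning down the exact constant, together with justifying the interchange of the conditionally convergent oscillatory integrals (using the compact support of $V,W$ as a regulator, and a contour deformation to secure absolute convergence where needed), is where the real work will lie.
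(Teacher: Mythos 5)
Your reduction to the inner kernel integral is the right skeleton, and your evaluation tools (Mehler--Sonine, Weber--Sonine, Graf) are a legitimate alternative to the paper's, but the statement you reduce to is false, and the failure is precisely the content of the theorem. The paper's $V*W$ is not the one-sided integral you use: in the course of its proof the paper sets $V*W(z):=G(z)=F(z)+F(-z)$, where $F$ is the displayed double integral (the symmetrization is forced because the Poisson-summation variable $n$ runs over all of $\Z$, so both $\pm n$ contribute). Your $I(u,v)$ is the transform of $F$ alone, and it does not equal $C_t B_{2it}(u)B_{2it}(v)$. You can see this by carrying out your own plan in the even case: after Mehler--Sonine, the formula $\int_0^\infty e^{iaz+ib/z}\,\frac{dz}{z}=i\pi H_0^{(1)}(2\sqrt{ab})$, Graf's expansion $H_0^{(1)}\big(\sqrt{u^2+v^2+2uv\sin\theta}\big)=\sum_m H_m^{(1)}(u)J_m(v)\,i^m e^{im\theta}$, and angular orthogonality do collapse the sum to a single term --- but that term is (up to the $i^k$ normalizations) $\pi\,H^{(1)}_{k-1}(u)J_{k-1}(v)$, a Hankel function times a Bessel function, carrying a nonzero $Y_{k-1}(u)$ component and the constant $\pi$, not $2\pi\,J_{k-1}(u)J_{k-1}(v)$. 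So even in the case you call an ``exact $2\pi\delta$-selection,'' orthogonality alone produces neither the right function nor the right constant.

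Your proposed rescue in the imaginary case --- that the antisymmetric combination $J_{-2it}-J_{2it}$ forces the spurious $Y$-parts to cancel --- also fails. Writing out the one-sided contribution as the paper does, it equals, up to the prefactor $\pi i/(2\sin(\pi i t))$, the combination $-e^{-\pi t}H^{(2)}_{-2it}(u)J_{-2it}(v)+e^{\pi t}H^{(2)}_{2it}(u)J_{2it}(v)$; expanding the Hankel functions, the four products $J_{\pm 2it}(u)J_{\pm 2it}(v)$ appear with the unequal weights $e^{\pm\pi t}$, whereas $B_{2it}(u)B_{2it}(v)$ requires equal weights, and $e^{\pm\pi t}/\cosh(\pi t)\neq 1$. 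The cancellation that actually proves the theorem comes from pairing the two kernels: $F(z)$ produces $H^{(2)}$-type products, $F(-z)$ produces the matching $H^{(1)}$-type products, and only their sum can invoke $H^{(1)}_\nu+H^{(2)}_\nu=2J_\nu$ --- equivalently, Watson's two contour formulas for $H^{(i)}_\nu(x)J_\nu(y)$, which is exactly how the paper proceeds after the substitution $w\mapsto xy/(-iw)$. This pairing is simultaneously the source of the factor $2$ in $C_k=2\pi$ and of the clean product $\pi B_{2it}(u)B_{2it}(v)$ in the imaginary case. Once you replace $I(u,v)$ by its symmetrized version, your machinery should go through and is essentially parallel to the paper's argument.
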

  Such results are valuable for inverting test functions in the trace formula, and are highly sought after for higher rank trace formulae. This beyond endoscopic approach could possibly help.

 The main theorem proved in the paper is
\begin{theorem}\label{main theorem1} Let $l,l'$ be positive integers, then 
  $$\lim_{X \to \infty} \frac{1}{X} \sum_{n}
g(n/X) K_{n,l}(V)  K_{n,l'}(W) =  \frac{12}{\pi}
K_{l,l'}(V*W)$$

\end{theorem}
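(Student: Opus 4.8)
The plan is to pass from the spectral side to the geometric (Kloosterman) side by means of the Kuznetsov trace formula, carry out the average over $n$ entirely on the geometric side, and then reassemble the result as a single Kuznetsov sum for the pair $(l,l')$ with the convolved test function $V*W$. First I would apply the Kuznetsov formula to each factor separately. Since the transform $h(V,\lambda)$ has been normalized precisely so that $V$ is the geometric kernel, this gives
$$K_{n,l}(V) = \delta_{n,l}\,\mathcal{D}(V) + \sum_{c\ge 1}\frac{S(n,l;c)}{c}\,V\!\left(\frac{4\pi\sqrt{nl}}{c}\right),$$
with $S(m,n;c)$ the Kloosterman sum and $\mathcal{D}(V)$ the diagonal contribution, and likewise for $K_{n,l'}(W)$. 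Multiplying the two expansions produces four terms. The three terms carrying at least one factor $\delta_{n,l}$ or $\delta_{n,l'}$ are each supported on a single value of $n$ (namely $n=l$ or $n=l'$), so after applying $\frac1X\sum_n g(n/X)$ they are $O(1/X)$ and vanish in the limit. The entire limit is therefore governed by the product of the two Kloosterman terms.

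For this main term I would open both Kloosterman sums into additive characters and apply Poisson summation in $n$. Because $g(n/X)$ forces $n\asymp X$ while the supports of $V$ and $W$ force $c\asymp\sqrt{Xl}$ and $c'\asymp\sqrt{Xl'}$, the resulting exponential integral oscillates and is negligible unless the total additive frequency $\tfrac{a}{c}+\tfrac{b}{c'}$ lies within $O(1/X)$ of an integer. As $X\to\infty$ only these near-integer frequencies survive; they pin down a Diophantine relation $ac'+bc\equiv 0$ modulo a new modulus $c''$, and the surviving sum over residue classes collapses the pair of Kloosterman sums $S(n,l;c)$, $S(n,l';c')$ into the single sum $S(l,l';c'')$.

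To reassemble, I would change variables by $x=c/\sqrt{nl}$ and $y=c'/\sqrt{nl'}$, so that the arguments of $V$ and $W$ become $4\pi/x$ and $4\pi/y$. The two Bessel oscillations produced by the Poisson/stationary-phase step then combine into exactly the two exponential factors in the definition of $V*W$, evaluated at $z=4\pi\sqrt{ll'}/c''$, while the residual sum over $c,c'$ subject to the coprimality constraints evaluates to $1/\zeta(2)=6/\pi^2$; together with the $2\pi$ from the Poisson normalization this produces the constant $\tfrac{12}{\pi}$. The outcome is precisely
$$\frac{12}{\pi}\left(\delta_{l,l'}\,\mathcal{D}(V*W) + \sum_{c''\ge 1}\frac{S(l,l';c'')}{c''}\,(V*W)\!\left(\frac{4\pi\sqrt{ll'}}{c''}\right)\right),$$
which is $\tfrac{12}{\pi}$ times the geometric side of the Kuznetsov formula for $(l,l')$ with test function $V*W$. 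A final application of Kuznetsov in reverse, together with the convolution identity $h(V*W,t)=C_t\,h(V,t)h(W,t)$ of Theorem \ref{main theorem} (which absorbs the $t$-dependent factor $C_t$ into the spectral expansion of $K_{l,l'}(V*W)$), re-expresses this geometric side as $\tfrac{12}{\pi}K_{l,l'}(V*W)$, giving the claim.

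The hard part will be the rigorous extraction of this main term and the control of everything discarded after Poisson summation. One must show that the off-diagonal dual frequencies and the higher integer shifts $m$ contribute negligibly, which requires estimating sums of products of Kloosterman sums via the Weil bound — plausibly with extra cancellation on average — uniformly over the long ranges $c,c'\asymp\sqrt X$. Equally delicate is justifying that the limit $X\to\infty$ commutes with the infinite summations over $c$ and $c'$, and verifying that the stationary-phase analysis of the combined Bessel kernels reproduces exactly the two-phase integrand of $V*W$ with the correct constant. This convergence-and-main-term analysis, rather than the formal algebra of the reduction, is where the genuine difficulty lies.
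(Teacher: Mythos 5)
Your skeleton is the same as the paper's: apply Kuznetsov to both factors, work on the geometric side, Poisson summation in $n$, collapse the product of Kloosterman sums into a single Kloosterman sum attached to $V*W$, then convert back to the spectral side. But the step you defer as ``the hard part'' is essentially the whole proof, and the mechanism you forecast for it is not the one that works. The collapse is not achieved by near-integrality plus stationary phase plus Weil-bound estimates on average over $c,c'\asymp\sqrt{X}$; no Weil bound appears anywhere, and there is no cancellation in Kloosterman sums to exploit. After Poisson summation the surviving integer is $n=c_2x+c_1y-mc_1c_2$, and the modulus of the collapsed Kloosterman sum is this dual variable itself, not a modulus $c''$ determined by a congruence among $c,c'$. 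The collapse is an exact arithmetic fact (Propositions \ref{inverse} and \ref{bijection}): for solutions of $c_2x+c_1y=n$ one has $\overline{x}=(c_2+c_1r)/n$, $\overline{y}=(c_1+c_2\overline{r})/n$ with $r\overline{r}\equiv 1 \pmod{n}$, so the phases $e(\overline{x}l/c_1+\overline{y}l'/c_2)$ become $e((rl+\overline{r}l')/n)$ --- whence $S(l,l',n)$ after summing over $r$ --- times the smooth phases $e(lc_2/(c_1n)+l'c_1/(c_2n))$ that build the kernel of $V*W$. The main term over the pairs $(c_1,c_2)$ compatible with a given $r$ is then extracted by Mellin transforms and contour shifts: the constrained $c_1$-sum is essentially $L(s,\chi_0)/L(s+1,\chi_0)$ times Ramanujan-sum factors, whose residue at $s=1$ yields $\frac{6}{\pi^2}$ times constants $R(n,d)$ with $\sum_{d|n}R(n,d)=1$ (Proposition \ref{Calc}), with errors controlled by integration by parts (giving $F_n=O(n^{-k})$), a contour at $\Re(s)=\sigma_0<1$, and a uniform-convergence lemma to justify interchanging $\lim_{X\to\infty}$ with the sum over the dual variable $n$. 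None of these ideas appears in your proposal, and they are where the theorem lives.

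The reassembly is also subtler than ``Kuznetsov in reverse.'' The zero-frequency term ($c_2x+c_1y-mc_1c_2=0$, which forces $c_1=c_2$ and $x=-y$) produces $\frac{6}{\pi^2}\delta_{l,l'}\int_0^\infty V(y)W(y)\frac{dy}{y}$, a diagonal in the \emph{original} test functions, not a diagonal contribution $\mathcal{D}(V*W)$ of the convolved one; indeed, in the paper's normalization \eqref{eq:kuznet} the geometric side of the trace formula has no diagonal term at all, so the display you claim as the outcome is not the object the computation produces. To finish one needs, besides the convolution identity of Theorem \ref{main theorem}, also Proposition \ref{pr5} (Sears--Titchmarsh inversion applied to the product, expressing $\int_0^\infty V(x)W(x)\frac{dx}{x}$ through $M(t)=h(V,t)h(W,t)$), and then the Kuznetsov and Petersson formulas run in the forward direction with spectral weight $M$ so that this diagonal matches their diagonal terms; it is these normalizations --- not ``the $2\pi$ from the Poisson normalization'' --- that convert the geometric constant $\frac{6}{\pi^2}$ into the spectral constant $\frac{12}{\pi}$.
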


 We find it
extremely interesting that if one looks at Theorem \ref{main
theorem1} strictly from the geometric sides of the trace formula,
one has
\begin{cor}\label{thm:lowerbound} \begin{equation*}  \lim_{X \to \infty} \frac{1}{X} \sum_{n
\in \Z}g(n/X) \left( \sum_{c_1=1}^\infty \frac{1}{c_1}
S(l,n,c_1)V(4\pi\sqrt{nl}/c_1)\right) \left(\sum_{c_2=1}^\infty
\frac{1}{c_2} S(l',n,c_2)W(4\pi\sqrt{nl'}/c_2)\right)=\end{equation*}
$$\sum_{d=1}^\infty \frac{1}{d}
S(l,l',d)(V*W)(4\pi\sqrt{ll'}/d),$$ where $$
S(r,s,d):=\sum_{x \in (\Z/ d\Z)*} e(\frac{r \overline{x}+s
x}{c})$$ is the Kloosterman sum.
\end{cor}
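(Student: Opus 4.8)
\emph{Proof plan for Corollary \ref{thm:lowerbound}.} The plan is to read the identity of Theorem \ref{main theorem1} off through the geometric side of the Kuznetsov formula rather than through the spectral data defining $K_{n,l}$. The Petersson--Kuznetsov formula expresses each factor as a diagonal term plus a Kloosterman series, $K_{n,l}(V)=\delta_{n,l}\,D(V)+\sum_{c\ge1}c^{-1}S(n,l,c)\,V(4\pi\sqrt{nl}/c)$, and I would substitute this for every occurrence of $K$ in the statement. Because $V,W$ and $g$ all have compact support in $\R^{+}$, every sum here is finite for each fixed $X$: the factor $g(n/X)$ restricts $n$ to a range $\asymp X$, and the support of $V,W$ restricts $c_1\asymp\sqrt{nl}$ and $c_2\asymp\sqrt{nl'}$, so the manipulations below are entirely formal and no convergence question arises.

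First I would expand the product $K_{n,l}(V)K_{n,l'}(W)$ into four pieces according to $K=(\text{diagonal})+(\text{Kloosterman})$. The three pieces carrying a factor $\delta_{n,l}$ or $\delta_{n,l'}$ are supported on $n=l$ or $n=l'$; since $g$ is supported away from $0$ and $l,l'$ are fixed, one has $g(l/X)=g(l'/X)=0$ for all large $X$, so these pieces vanish identically and not merely in the limit. Hence $\frac1X\sum_n g(n/X)K_{n,l}(V)K_{n,l'}(W)$ agrees, for large $X$, with the same average of the pure $(\text{Kloosterman})\times(\text{Kloosterman})$ term, which by the symmetry $S(l,n,c)=S(n,l,c)$ is exactly the left-hand side of Corollary \ref{thm:lowerbound}. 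Theorem \ref{main theorem1} now identifies this limit with $\tfrac{12}{\pi}K_{l,l'}(V*W)$.

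It remains to match $\tfrac{12}{\pi}K_{l,l'}(V*W)$ with the single Kloosterman series on the right of Corollary \ref{thm:lowerbound}, and this reconciliation is where the real work sits. Applying the geometric expansion once more writes $K_{l,l'}(V*W)=\delta_{l,l'}D(V*W)+\sum_{d\ge1}d^{-1}S(l,l',d)(V*W)(4\pi\sqrt{ll'}/d)$, so I must account for both the diagonal coefficient $D(V*W)$ and the constant $\tfrac{12}{\pi}$ --- the latter being the spectral density produced by the pole of the Rankin--Selberg $L$-function (equivalently $1/\mathrm{vol}(\gammamodh)$ together with the normalizing factors $C_t$ of Theorem \ref{main theorem}). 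I would compute $D(V*W)$ and this constant explicitly from the definitions and from Theorem \ref{main theorem}, and verify that what survives is precisely $\sum_{d\ge1}d^{-1}S(l,l',d)(V*W)(4\pi\sqrt{ll'}/d)$. This constant-and-diagonal bookkeeping is the delicate step; the passage from Theorem \ref{main theorem1} to the purely geometric statement is otherwise immediate.
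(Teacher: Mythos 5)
Your first two paragraphs do follow the paper's intended route: Corollary \ref{thm:lowerbound} is presented in the paper as nothing more than Theorem \ref{main theorem1} reread through the geometric sides of the trace formula, and your treatment of the left-hand side is correct (indeed over-careful). But there is a genuine gap, and it sits exactly in the step you postpone. First, the phantom diagonal: the Kuznetsov formula this paper uses, \eqref{eq:kuznet}, places the test function $V$ on the Kloosterman side, and in that direction the formula has \emph{no} diagonal term at all --- $K_{n,l}(V)$, defined in \eqref{eq:kuzdef} as the full spectral side, equals exactly $\sum_{c\ge1}c^{-1}S(l,n,c)V(4\pi\sqrt{nl}/c)$. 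The diagonal term $\delta_{l,l'}G_0$ belongs only to the dual form of the formula (the Kuznetsov and Petersson formulas quoted in subsection \ref{sears}, where the test function sits on the spectral side). So on the left-hand side your $\delta_{n,l}D(V)$ terms are simply not present (your vanishing argument is harmless but moot), and on the right-hand side there is no quantity $D(V*W)$ to compute: applying \eqref{eq:kuznet} with test function $V*W$ gives, verbatim, the single Kloosterman series and nothing else.

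Second, and more seriously, the constant. Once the phantom diagonal is removed, your own argument yields
\begin{equation*}
\lim_{X\to\infty}\frac1X\sum_{n}g(n/X)\,K_{n,l}(V)K_{n,l'}(W)
=\frac{12}{\pi}K_{l,l'}(V*W)
=\frac{12}{\pi}\sum_{d=1}^\infty\frac1d S(l,l',d)\,(V*W)(4\pi\sqrt{ll'}/d),
\end{equation*}
that is, the stated right-hand side multiplied by $\tfrac{12}{\pi}$. No bookkeeping of diagonal terms can remove that factor, because there are no diagonal terms left to trade against it; your promise that ``what survives is precisely'' the bare Kloosterman series is therefore unsupported by your own setup. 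Reconciling the constant is not a formal exercise: it requires retracing the normalization of $V*W$ itself --- the constants $C_t$ of Theorem \ref{main theorem}, which differ between the holomorphic case ($2\pi$) and the Maass case ($\pi$) and hence obstruct a single clean application of \eqref{eq:kuznet} with test function $V*W$, together with the replacement of $F$ by $G=F(z)+F(-z)$ and the Sears--Titchmarsh/Proposition \ref{pr5} analysis. That is precisely the work the paper does in its later sections (compare \eqref{gf}, where the purely geometric evaluation of the limit still carries the constant $\tfrac{6}{\pi^2}$ and a diagonal term $\delta_{l,l'}\int_0^\infty V(y)W(y)\tfrac{dy}{y}$), and it is the content of the Corollary; a proof that defers it has deferred everything.
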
 The average of
a product of sums of Kloosterman sums is another sum of
Kloosterman sums. There is perhaps further application in this
statement.

We also prove the cuspidal (resp. continuous) parts of the limit $(L)$ match with themselves, and the cuspidal and continuous parts are orthogonal.  

\begin{theorem}\label{cupcon1}
$\lim_{X \to \infty} \frac{1}{X} \sum_{n}
g(n/X) S_{n,l}(V)  S_{n,l'}(W)=\frac{12}{\pi}S_{l,l'}(V*W).$
\end{theorem}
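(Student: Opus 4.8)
The plan is to expand the product of cuspidal sums into a double spectral sum and reduce the whole limit to a single arithmetic average in $n$ controlled by Rankin--Selberg theory. Writing out the two cuspidal terms,
$$S_{n,l}(V)\,S_{n,l'}(W)=\sum_{\phi,\psi}h(V,\lambda_\phi)\,h(W,\lambda_\psi)\,\overline{a_l(\phi)}\;\overline{a_{l'}(\psi)}\;a_n(\phi)\,a_n(\psi),$$
where $\phi$ and $\psi$ each run over the fixed orthonormal basis of cusp forms. Substituting into $(L)$ and (granting the interchange discussed below) carrying out the $n$-sum first, everything rests on the average
$$A_X(\phi,\psi):=\frac1X\sum_{n\ge 1}g(n/X)\,a_n(\phi)\,a_n(\psi).$$
I would show that $\lim_{X\to\infty}A_X(\phi,\psi)$ vanishes unless $\psi=\bar\phi$, and on that diagonal equals the residue at $s=1$ of the associated Rankin--Selberg Dirichlet series.

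To evaluate $A_X$ I would pass to Mellin transforms. Setting $\hat g(s)=\int_0^\infty g(t)t^{s-1}\,dt$ and $D_{\phi,\psi}(s):=\sum_{n\ge1}a_n(\phi)a_n(\psi)\,n^{-s}$, Mellin inversion of $g$ gives
$$A_X(\phi,\psi)=\frac{1}{2\pi i}\int_{(\sigma)}\hat g(s)\,X^{s-1}\,D_{\phi,\psi}(s)\,ds,\qquad \sigma>1.$$
Because $a_n(\bar\psi)=\overline{a_n(\psi)}$, the series $D_{\phi,\psi}$ is the Rankin--Selberg convolution of $\phi$ against $\bar\psi$; the standard unfolding against the Eisenstein series analytically continues it and produces a \emph{simple} pole at $s=1$ with residue proportional to $\langle\phi,\bar\psi\rangle$, which is nonzero only for $\psi=\bar\phi$. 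Shifting the contour to $\Re s=1-\delta$, the shifted integral is killed by $X^{s-1}\to0$ as $X\to\infty$, while the pole at $s=1$ contributes its residue times $\hat g(1)=\int_0^\infty g(t)\,dt=1$. Hence $\lim_{X\to\infty}A_X(\phi,\psi)=\delta_{\psi=\bar\phi}\,r(\lambda_\phi)$ with $r(\lambda_\phi):=\mathrm{Res}_{s=1}D_{\phi,\bar\phi}(s)$ depending only on the archimedean parameter.

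Assembling the diagonal then finishes the proof. Only $\psi=\bar\phi$ contributes, and there $\lambda_\psi=\lambda_\phi$ and $\overline{a_{l'}(\psi)}=a_{l'}(\phi)$, so the limit equals $\sum_\phi h(V,\lambda_\phi)h(W,\lambda_\phi)\,\overline{a_l(\phi)}\,a_{l'}(\phi)\,r(\lambda_\phi)$. Reindexing $\phi\mapsto\bar\phi$ (the basis is closed under conjugation, and $h$, $\lambda_\phi$, $r$ are conjugation-invariant) converts $\overline{a_l(\phi)}\,a_{l'}(\phi)$ into $a_l(\phi)\,\overline{a_{l'}(\phi)}$. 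Finally I invoke Theorem \ref{main theorem} to write $h(V,\lambda_\phi)h(W,\lambda_\phi)=C_{\lambda_\phi}^{-1}h(V*W,\lambda_\phi)$; the explicit Rankin--Selberg residue computation, which gives $r(\lambda_\phi)=\tfrac{12}{\pi}\,C_{\lambda_\phi}$ (the $\lambda$-dependence of the residue exactly matching the constant $C_{\lambda_\phi}$ from Theorem \ref{main theorem}, with $3/\pi$ the reciprocal volume of $SL_2(\Z)\backslash\mh$), makes the spectral-parameter dependence cancel and yields $\tfrac{12}{\pi}\sum_\phi h(V*W,\lambda_\phi)a_l(\phi)\overline{a_{l'}(\phi)}=\tfrac{12}{\pi}S_{l,l'}(V*W)$.

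The hard part will be justifying the interchange of the limit with the infinite double spectral sum and controlling the off-diagonal terms uniformly. This requires two ingredients: sufficient decay of the Bessel transforms $h(V,\lambda_\phi),h(W,\lambda_\psi)$ in the spectral parameters to outpace the Weyl-law growth of the basis and render the double sum absolutely convergent; and uniform-in-$(\phi,\psi)$ estimates in the contour shift, i.e.\ polynomial bounds (convexity/subconvexity together with Rankin--Selberg or large-sieve bounds for the Fourier coefficients) for $D_{\phi,\psi}(s)$ on the line $\Re s=1-\delta$, so that the total off-diagonal contribution is $o(1)$ with a rate good enough to survive the spectral summation. Verifying that these bounds are strong enough to commute the limit past the sum is the crux of the argument.
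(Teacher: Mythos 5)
You have not reproduced the paper's argument, and your proposal contains both a genuine gap and a concrete error. The paper proves Theorem \ref{cupcon1} by subtraction, not by spectral expansion: it first establishes Theorem \ref{main theorem1} entirely on the geometric side (Kloosterman sums, Poisson summation in $n$, the bijection of Proposition \ref{bijection}, contour shifts for the Dirichlet series built from Ramanujan sums, and finally Sears--Titchmarsh inversion to recognize the limit as the geometric side of a Kuznetsov/Petersson formula with test function $V*W$), then proves Theorems \ref{cupcon2} and \ref{cupcon3} directly, and obtains Theorem \ref{cupcon1} from the decomposition $S_{n,l}(V)S_{n,l'}(W)=K_{n,l}(V)K_{n,l'}(W)-C_{n,l}(V)C_{n,l'}(W)-S_{n,l}(V)C_{n,l'}(W)-C_{n,l}(V)S_{n,l'}(W)$, each of whose limits is now known. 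No Rankin--Selberg theory enters anywhere; that is the whole point, since the paper advertises the result as a new proof of the analytic behavior of $L(s,\phi\times\bar{\phi})$ at $s=1$. Your proposal is instead the ``expectation'' computation of Section 5 of the paper, promoted to a proof by assuming classical Rankin--Selberg theory (unfolding, analytic continuation, residue proportional to $\langle\phi,\bar{\psi}\rangle$, and vertical-strip bounds). Logically this is admissible for the bare statement, but it trivializes the theorem relative to the paper's aim, and --- more seriously as a matter of proof --- the step you yourself call the crux, interchanging the limit with the infinite double spectral sum via uniform bounds on $D_{\phi,\psi}(s)$ at $\Re s=1-\delta$, is exactly the obstruction the paper identifies (``we cannot freely interchange the spectral sum and the limit''), and you flag it rather than carry it out.

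There is also a concrete error in your constant matching. With the paper's normalization of Fourier coefficients (Section 4), the residue at $s=1$ of $\sum_n a_n(\phi)\overline{a_n(\phi)}\,n^{-s}$ for an $L^2$-normalized form is exactly $\frac{12}{\pi}$, independent of the weight or eigenvalue; the paper states this explicitly in Section 5. Your claim that $r(\lambda_\phi)=\frac{12}{\pi}C_{\lambda_\phi}$, with a $\lambda$-dependence ``exactly matching'' the dichotomy $C_t=2\pi$ (holomorphic) versus $C_t=\pi$ (Maass) of Theorem \ref{main theorem}, is false: nothing in the Rankin--Selberg residue distinguishes holomorphic from Maass forms after this normalization. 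The honest output of your computation is $\frac{12}{\pi}\sum_\phi h(V,\lambda_\phi)h(W,\lambda_\phi)a_l(\phi)\overline{a_{l'}(\phi)}$, i.e.\ the cuspidal sum with multiplier $M(t)=h(V,t)h(W,t)$ rather than with $h(V*W,t)=C_tM(t)$. You have in fact stumbled on a normalization tension inside the paper itself --- the paper's own proof of Theorem \ref{main theorem1} also terminates in spectral sums weighted by $M$, in the display following (\ref{yoy}) --- but the way to address that is to sort out the normalization of $V*W$ in the spectral sums, not to invent a spectral-parameter dependence for the Rankin--Selberg residue.
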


\begin{theorem}\label{cupcon2}
$\lim_{X \to \infty} \frac{1}{X} \sum_{n}
g(n/X) C_{n,l}(V)  C_{n,l'}(W)=\frac{12}{\pi}C_{l,l'}(V*W).$
\end{theorem}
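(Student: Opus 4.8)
The plan is to follow the architecture of the proof of Theorem \ref{cupcon1}, replacing the cuspidal Rankin--Selberg sum by its Eisenstein analogue. First I would expand the product of the two continuous terms into a double spectral integral,
$$C_{n,l}(V)\,C_{n,l'}(W)=\frac{1}{16\pi^2}\int_{-\infty}^{\infty}\!\!\int_{-\infty}^{\infty} h(V,t_1)h(W,t_2)\,\eta(n,\tfrac12+it_1)\eta(n,\tfrac12+it_2)\,\overline{\eta(l,\tfrac12+it_1)}\,\overline{\eta(l',\tfrac12+it_2)}\,dt_1\,dt_2.$$
Because $g\in C_0^\infty(\R^+)$ forces $n\ge 1$ (so the sum in $(L)$ is really over positive $n$), and because $h(V,\cdot),h(W,\cdot)$ decay rapidly since $V,W$ are smooth of compact support, for each fixed $X$ the finite $n$-sum may be interchanged with the $t$-integrations. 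Everything then reduces to the inner average
$$D_X(t_1,t_2):=\frac{1}{X}\sum_{n\ge1}g(n/X)\,\eta(n,\tfrac12+it_1)\,\eta(n,\tfrac12+it_2),$$
which is the continuous-spectrum counterpart of the sum $\frac1X\sum_n g(n/X)a_n(\phi)a_n(\psi)$ that drives Theorem \ref{cupcon1}.

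To evaluate $D_X$ I would apply Mellin inversion, $g(n/X)=\frac{1}{2\pi i}\int_{(\sigma)}\tilde g(s)(X/n)^s\,ds$ with $\tilde g(s)=\int_0^\infty g(y)y^{s-1}dy$ and $\sigma>1$, together with Ramanujan's identity for a product of two shifted divisor functions. Writing $\eta(n,\tfrac12+it)=n^{-it}\sigma_{2it}(n)$, this produces the clean Dirichlet series
$$\sum_{n\ge1}\frac{\eta(n,\frac12+it_1)\eta(n,\frac12+it_2)}{n^s}=Z(s,t_1,t_2):=\frac{\zeta(s+i(t_1+t_2))\zeta(s-i(t_1+t_2))\zeta(s+i(t_1-t_2))\zeta(s-i(t_1-t_2))}{\zeta(2s)},$$
so that $D_X(t_1,t_2)=\frac{1}{2\pi i}\int_{(\sigma)}\tilde g(s)X^{s-1}Z(s,t_1,t_2)\,ds$. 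The four numerator factors have simple poles on the line $\Re s=1$, at $s=1\mp i(t_1+t_2)$ and $s=1\mp i(t_1-t_2)$; for generic $(t_1,t_2)$ none of these sits at $s=1$, which is exactly the new feature compared with the discrete spectrum.

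The heart of the argument is to push the contour to $\Re s=\sigma'<1$. The shifted integral is $O(X^{\sigma'-1})\to 0$, so the limit is carried entirely by the four residues. Each residue contributes an oscillatory factor such as $X^{-i(t_1+t_2)}$ whose coefficient contains a further zeta factor; for the pole at $s=1-i(t_1+t_2)$ this factor is $\zeta(1-2i(t_1+t_2))$, itself singular on the diagonal $t_1+t_2=0$. When a residue is inserted back into the $(t_1,t_2)$-integral, the oscillation kills it by Riemann--Lebesgue \emph{except} where its coefficient is singular, i.e.\ along a diagonal; there, after the substitution $u=t_1+t_2$ (resp.\ $u=t_1-t_2$), the interaction of the pole with the oscillation is governed by the principal-value limit $\int_\R \frac{e^{-iu\log X}}{u}\,\psi(u)\,du\to -i\pi\,\psi(0)$ as $X\to\infty$. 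This localizes the whole contribution to the diagonals $t_1=-t_2$ and $t_1=t_2$. This pinch is the genuinely new step relative to Theorem \ref{cupcon1}, where the spectrum is discrete and a single residue is extracted at an isolated point; making it rigorous — justifying the principal-value evaluation, proving uniformity in the surviving variable, and controlling the off-diagonal residues — is the main obstacle.

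Finally I would assemble the two diagonal contributions. By the evenness in $t$ of $\eta(\cdot,\tfrac12+it)$ and of the Bessel transforms, the diagonals $t_1=t_2$ and $t_1=-t_2$ contribute equally. On each diagonal the surviving zeta factors combine with $\zeta(2s)|_{s=1}=\zeta(2)=\pi^2/6$ and with the normalization implicit in $\eta$, while $\tilde g(1)=\int_0^\infty g=1$; the principal value supplies the factor $-i\pi$, and the two diagonals together with these constants assemble into the numerical factor $\frac{12}{\pi}$. What remains of the integrand on the diagonal is precisely $h(V,t)h(W,t)\,\eta(l,\tfrac12+it)\,\overline{\eta(l',\tfrac12+it)}$, and invoking Theorem \ref{main theorem} in the purely imaginary case, $h(V*W,t)=\pi\,h(V,t)h(W,t)$, turns this into the integrand of $C_{l,l'}(V*W)$. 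Collecting everything gives
$$\lim_{X\to\infty}\frac{1}{X}\sum_n g(n/X)\,C_{n,l}(V)\,C_{n,l'}(W)=\frac{12}{\pi}\,C_{l,l'}(V*W),$$
as claimed.
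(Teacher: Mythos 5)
Your proposal is correct and follows essentially the same route as the paper: expand the product of continuous-spectrum terms into a double $t$-integral, use Mellin inversion plus Ramanujan's identity to get the ratio of four shifted zeta factors over $\zeta(2s)$, shift the contour past $\Re s=1$, and let the oscillating residues localize onto the diagonals via the principal-value limit (the paper handles this exact step by a change of variables $T\to T-t$ and Venkatesh's principal-value lemma with $k=-\log X$), before reassembling the constants and using $h(V*W,t)=\pi\,h(V,t)h(W,t)$.
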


\begin{theorem}\label{cupcon3}
$ \lim_{X \to \infty} \frac{1}{X} \sum_{n}
g(n/X) S_{n,l}(V)  C_{n,l'}(W)=0.$

\end{theorem}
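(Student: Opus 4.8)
The plan is to show the cross term vanishes by interchanging the spectral sum and integral with the sum over $n$, and then reducing to the fact that the Rankin--Selberg convolution of a cusp form against an Eisenstein series has no pole at $s=1$. First I would expand both factors and formally interchange the order of summation. Writing out $S_{n,l}(V)$ and $C_{n,l'}(W)$ from \eqref{eq:kuzdef}, the quantity inside the limit becomes
\[
\frac{1}{4\pi}\sum_\phi h(V,\lambda_\phi)\,\overline{a_l(\phi)}\int_{-\infty}^\infty h(W,t)\,\overline{\eta(l',\tfrac12+it)}\,\Sigma_X(\phi,t)\,dt,
\]
where $\Sigma_X(\phi,t):=\frac{1}{X}\sum_n g(n/X)\,a_n(\phi)\,\eta(n,\tfrac12+it)$ is the decisive inner sum over $n$.

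Next I would evaluate $\Sigma_X(\phi,t)$ by Mellin inversion. Using $g(n/X)=\frac{1}{2\pi i}\int_{(\sigma)}\tilde g(s)(n/X)^{-s}\,ds$, where $\tilde g(s)=\int_0^\infty g(u)u^{s-1}\,du$ is of rapid decay in vertical strips, and taking $\sigma=1+\delta$ so that the Dirichlet series converges, one meets $D(s,\phi,t):=\sum_n a_n(\phi)\eta(n,\tfrac12+it)n^{-s}$. By Hecke multiplicativity this is, up to the normalizing factor $\zeta(2s)$, the Rankin--Selberg $L$-function $L(s,\phi\times E_t)=L(s+it,\phi)L(s-it,\phi)$. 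Since $\phi$ is \emph{cuspidal}, both factors $L(s\pm it,\phi)$ are entire, while $\zeta(2s)\neq 0$ for $\Re s>\tfrac12$; hence $D(s,\phi,t)$ is holomorphic for $\Re s>\tfrac12$. Shifting the contour from $\Re s=1+\delta$ past the line $\Re s=1$ down to $\Re s=\tfrac12+\delta'$ therefore crosses \emph{no} pole, giving $\Sigma_X(\phi,t)=\frac{1}{2\pi i}\int_{(1/2+\delta')}\tilde g(s)X^{s-1}D(s,\phi,t)\,ds=O(X^{-1/2+\delta'})$. This absence of a residue is the arithmetic heart of the statement: the limit $(L)$ detects poles of Rankin--Selberg $L$-functions at $s=1$, and a cusp form paired against an Eisenstein series produces none, in contrast to the diagonal poles that drive Theorems \ref{cupcon1} and \ref{cupcon2}.

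The main obstacle is justifying the interchange of the limit with the infinite spectral sum over $\phi$ and the integral over $t$. For this I would make the contour-shift bound explicit, showing $\Sigma_X(\phi,t)=O\!\big(X^{-1/2+\delta'}P(\lambda_\phi,|t|)\big)$ for a fixed polynomial $P$, using convexity bounds for $L(s\pm it,\phi)$ along the shifted line together with the rapid vertical decay of $\tilde g$. Since $X^{-1/2+\delta'}\le 1$ for $X\ge 1$, this one estimate delivers both the pointwise vanishing $\Sigma_X(\phi,t)\to 0$ and a domination $|\Sigma_X(\phi,t)|\le P(\lambda_\phi,|t|)$ uniform in $X$. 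Dominated convergence then applies once $P(\lambda_\phi,|t|)$ is absorbed by the rapid decay of the Bessel transforms $h(V,\lambda_\phi)$ and $h(W,t)$—which follows from the smoothness and compact support of $V,W$—against the polynomial growth of $\overline{a_l(\phi)}$ and $\overline{\eta(l',\tfrac12+it)}$ and the spectral density furnished by Weyl's law. Granting this, every term tends to $0$, so the limit is $0$.

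An alternative route, matching the bookkeeping of Theorems \ref{cupcon1} and \ref{cupcon2}, is to write $S_{n,l}(V)=K_{n,l}(V)-C_{n,l}(V)$ and to show that $\lim_{X\to\infty}\frac{1}{X}\sum_n g(n/X)K_{n,l}(V)C_{n,l'}(W)$ equals the continuous contribution $\frac{12}{\pi}C_{l,l'}(V*W)$ already computed in Theorem \ref{cupcon2}, so that the difference cancels. This trades the spectral interchange above for a geometric-side computation with the Kloosterman expansion of $K_{n,l}(V)$ paired against the explicit divisor-function form of $C_{n,l'}(W)$, but reaches the same conclusion.
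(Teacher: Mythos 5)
Your proposal is correct and follows essentially the same route as the paper's proof: Mellin inversion of the $n$-sum, identification of the Dirichlet series $\sum_n a_n(\phi)\eta(n,1/2+it)n^{-s}$ with (up to harmless normalization factors) $L(s+it,\phi)L(s-it,\phi)/\zeta(2s)$, which has no pole at $s=1$ because $\phi$ is cuspidal, and a contour shift past $s=1$ producing a negative power of $X$ that kills the limit. Your explicit dominated-convergence justification for interchanging the limit with the spectral sum and $t$-integral is in fact more careful than the paper, which performs that interchange implicitly and simply records the termwise bound $O_{t,\phi}(X^{-1/4})\to 0$.
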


In a forthcoming paper we plan to add Hecke operators into our trace formula to obtain the analytic continuation of the Rankin-Selberg L-function.

Very generally these theorems say if we apply the trace formula to the
spectral sums $\phi,\psi$ in $(L)$ to get the geometric sides of
the formula, and take the limit as $X\rightarrow \infty, (L)$ is
equal to just a single spectral sum.

{\bf Acknowledgements.} The author would like to thank his advisor Jonathan Rogawski for proposing this problem as part of his thesis, as well as his very helpful ideas. The author also would like to acknowledge the useful conversations with Brian Conrey, Eric Ryckman, and Peter Sarnak. Finally, the author appreciates all the helpful points from the referee.

\section{Preliminaries}

        We start by defining the Kuznetsov trace formula used in this
    Chapter and its normalization. We refer to  \cite{Iw} book on
    it's derivation. Let $S(\Gamma_0(N))$ be the space of
    holomorphic cusp forms of weight $k$ for the group $\Gamma_0(N).$ For each
    form $\phi \in S_k(\Gamma_0(N)),$ let $c_n(\phi)$ be the $n$-th
    Fourier coefficient, then define $$a_n(\phi):=
    \sqrt{\frac{\pi^{-k} \Gamma(k)} {(4n)^{k-1}}}c_n(\phi).$$
Likewise, for Maass cusp forms we define
        $$a_n(\phi):=(\frac{4\pi |n|}{\cosh(\pi
        s)})^{1/2}\rho(n),$$ where $\phi$ has $L^2$ norm one and
        eigenvalue $1/4+s^2$ with Fourier expansion $$
        \phi(z)=\sum_{n \neq 0} \rho(n)W_s(nz).$$ Here $W_s(nz)=2\sqrt{y}
        K_{s-1/2}(ny)e(x).$
 The continuous spectrum coefficients are defined as $$\eta(l,1/2+it):=2\pi^{1+it}{\cosh(\pi t)}^{-1/2} \frac{\tau_{it}(n)}{\Gamma(1/2+it)\zeta(1+2it)},$$ 
where $\tau_{it}(n)=\sum_{ab=n}(a/b)^{it}.$

    The Kuznetsov formula states
\begin{equation}\label{eq:kuznet}\sum_{\phi}
h(V,\lambda_{\phi})a_n(\phi)\overline{a_l(\phi)}+\frac{1}{4\pi}
\int_{-\infty}^\infty h(V,t)\eta(n,1/2
+it)\overline{\eta(l,1/2+it)}dt\end{equation} $$=
\sum_{c=1}^\infty \frac{1}{c} S(l,n,c)V(4\pi \sqrt{ln}/c)$$
 where the sum $\phi$ is
over an orthonormal basis for $S_k(\Gamma_0), k \in 2 \Z$ and
Maass forms w.r.t. the Petersson inner product, and $V \in
C_0^{\infty}(\R-\{0\}).$

\section{Expectation of poles of Rankin-Selberg L-function}

    We focus on the holomorphic forms, the Maass forms are analogous.
Classically, the Rankin-Selberg L-function is defined as
$$L(s,\phi \times \psi)=\zeta(2s) \sum_{n=1}^\infty c_n(\phi)
d_n(\psi)n^{-s},$$ for cuspidal Hecke eigenforms $\phi,\psi \in
S_k(\Gamma_0),$ with Fourier (unnormalized) coefficients
$c_n(\phi), d_n(\psi),$ respectively. The work of Rankin and
Selberg show this L-function has much of the same good analytic
properties of Hecke and automorphic L-functions: analytic
continuation, a functional equation, and an Euler product. They
show further if $\phi=\bar{\psi},$ then $$ \text{Res}_{s=k}
\sum_{n=1}^\infty c_n(\phi) d_n(\psi)n^{-s}=\frac{3}{\pi}
\frac{(4\pi)^k}{\Gamma(k)}<\phi,\phi>,$$ and the L-function is
entire else. Now taking into consideration the normalization from
the previous section, this same statement about the poles of the
Rankin-Selberg L-function is
$$ \langle \phi,\phi\rangle^{-1} \text{Res}_{s=1} \sum_{n=1}^\infty  a_n(\phi)
b_n(\psi)n^{-s}=\frac{12}{\pi}.$$

    We ask now what do we expect from a beyond endoscopy
    calculation for a product of two Kuznetsov formulas. Now assuming the basis of automorphic forms is orthonormal, one studies \begin{equation}\label{eq:nsum} \lim_{X \to \infty} \frac{1}{X} \sum_{n \in \Z}g(n/X)
\left(\sum_{\phi} h(V,\lambda_{\phi})a_n(\phi)\overline{a_l(\phi)}
+ \{C.S.C.\}_{n,l}\right)\end{equation}
$$\left(\sum_{\psi}
h(W,\lambda_{\psi})b_n(\phi)\overline{b_{l'}(\phi)} + \{C.S.C.\}_{n,l'}\right),$$ where $\{C.S.C.\}_{i,j}$ stands for the continuous spectrum contribution with Fourier coefficient parameters $i,j$ as in \eqref{eq:kuznet}.  If we are free to interchange sums and
limits, the heart of the calculation boils down to investigating
the smooth sum over $n,$ \begin{equation}\label{eq:ini}
\frac{1}{X} \sum_{n \in
\Z}g(n/X)a_n(\phi)b_n(\psi).\end{equation} Via
Mellin inversion, \eqref{eq:ini} equals 
\begin{equation}\label{eq:ini1}
\frac{1}{2\pi i} \int_{\sigma -i\infty}^{\sigma+i\infty} G(s)\frac{L(s,\phi \times \psi)}{\zeta(2s)}X^s ds, 
\end{equation}
where $G(s)=\int_0^\infty g(x)x^{s-1}dx$ is the Mellin transform, and $\sigma >2$ to ensure the convergence of the integral. Assuming Rankin-Selberg theory, we make a contour shift to $\sigma_1=1-\epsilon,$ with $\epsilon>0$ sufficiently small. Then \eqref{eq:ini1} equals $$\frac{12\delta_{\phi,\psi}}{\pi} + O(X^{-\epsilon}),$$ where  \begin{equation*} \delta_{\phi,\psi}
 := \left\{ \begin{array}{ll}
         1 & \text{if } \phi=\bar{\psi},  \medskip \\
       0 & \text{if } \phi\neq \bar{\psi}. \end{array} \right. \end{equation*}

 Therefore, we expect this sum over $n$ to be non-trivial when
$\phi=\bar{\psi}$ with residue $\frac{12}{\pi},$ or analogously \eqref{eq:nsum} equals $$\frac{12}{\pi}\sum_{\phi} h(V,\lambda_{\phi})h(W,\lambda_{\phi})a_l(\phi)b_{l'}(\phi) + \{C.S.C.\}_{l,l'}.$$

 This is precisely
the statement of Theorem \ref{main theorem1}. The problem is we
cannot freely interchange the spectral sum and the limit in
\eqref{eq:nsum}. However, after using the Kuznetsov trace formula
for both spectral sums and some analysis we can take this limit.

\begin{remark}
Theorem \ref{main theorem1} can be proved using Voronoi summation,
very similar to Chapter 3 in \cite{Venk1}. The author focuses on
using two Kuznetsov formulas instead, because the Voronoi
summation argument does not work in the Asai L-function case which was the focus of the author's thesis. The author has notes proving the result using
Voronoi summation as well, but chose not to incorporate them into the
paper.
\end{remark}

\section{Number-theoretic lemmas}
We prove some number-theoretic lemmas that are crucial to our
calculation. Using standard terminology, let $x$ be a representative class modulo $c$ such that $(x,c)=1.$ We then denote $\overline{x} \mod c$ as the element such that $\overline{x}x\equiv 1(c).$ 

\begin{definition}\label{equiv}
Let $\overline{X}(c_1,c_2,n)$ denote the equivalence classes of
pairs $(x,y)$ with $x,y\in\mathbf{Z}$ such that $(x,c_1)=1$,
$(y,c_2)=1$, and
$$
c_2x+c_1y = n.
$$
Here we say that $(x,y)$ is equivalent to $(x',y')$ if $x\equiv
x'\pmod{c_1}$ and $y\equiv y'\pmod{c_2}$. Let $X(c_1,c_2,n)$ be a
set of representatives for the classes in
$\overline{X}(c_1,c_2,n)$.
\end{definition}

\begin{prop}\label{lem:n00}
Let $(x,y)\in X(c_1,c_2,0),$ then $x=-y, c_1=c_2.$
\end{prop}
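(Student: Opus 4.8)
The plan is to work directly with the defining linear relation in the case $n=0$ and to extract the conclusion from two divisibility facts. By Definition~\ref{equiv}, a pair $(x,y)\in X(c_1,c_2,0)$ satisfies $c_2 x + c_1 y = 0$, equivalently $c_2 x = -c_1 y$, together with the coprimality conditions $(x,c_1)=1$ and $(y,c_2)=1$. The moduli $c_1,c_2$ are positive integers throughout (they arise as the denominators of Kloosterman sums), so the whole argument is an elementary coprimality computation; I expect no serious obstacle here.

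First I would establish $c_1=c_2$. From $c_2 x = -c_1 y$ we read off $c_1 \mid c_2 x$; since $(x,c_1)=1$, this forces $c_1 \mid c_2$. Symmetrically, the same relation gives $c_2 \mid c_1 y$, and $(y,c_2)=1$ forces $c_2 \mid c_1$. Two positive integers each dividing the other must coincide, so $c_1 = c_2$; write $c$ for this common value.

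Finally, substituting $c_1=c_2=c$ back into $c_2 x + c_1 y = 0$ yields $c(x+y)=0$, and since $c>0$ we conclude $x=-y$. I would remark that this conclusion in fact holds for \emph{every} pair satisfying the constraints, not merely for a chosen representative, so it descends to the representative classes of $\overline{X}(c_1,c_2,0)$ with no further work; in particular $X(c_1,c_2,0)$ is empty unless $c_1=c_2$. The only point warranting a word of justification is the positivity of $c_1,c_2$, which is built into their role as Kloosterman-sum moduli in the trace formula.
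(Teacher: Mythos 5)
Your proof is correct and follows essentially the same route as the paper: reduce the relation $c_2x+c_1y=0$ modulo $c_1$ and modulo $c_2$, use the coprimality hypotheses $(x,c_1)=1$ and $(y,c_2)=1$ to obtain the mutual divisibilities $c_1\mid c_2$ and $c_2\mid c_1$, conclude $c_1=c_2$ from positivity, and then read off $x=-y$ from $c_1(x+y)=0$. Your phrasing via divisibility is in fact cleaner than the paper's formulation (which writes $c_2=\overline{x}\gamma c_1$), but the argument is the same.
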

\begin{proof}
 It is sufficient to study
$$c_2x \equiv 0 (c_1). $$ Since $(x,c_1)=1,$ we
have $c_2=\overline{x}\gamma c_1,\gamma \in \Z.$ Likewise, $$c_1y
\equiv 0 (c_2). $$  implies $c_1=\overline{y}\gamma' c_2.$ This
implies $c_1=c_2.$ Certainly then $$ c_1(x+y)= 0,$$ or $x=-y.$
\end{proof}

It is assumed, unless stated otherwise, $n \neq 0.$

\begin{prop}\label{inverse} Let $(x,y)\in X(c_1,c_2,n)$ and $\xbar \in\mathbf{Z} $ be an inverse of $x$ modulo $c_1$ and $\ybar \in\mathbf{Z}$ be an inverse of $y$ modulo
$c_2$. Then there exists a pair $(r_1,r_2)$ such that
$r_1r_2\equiv 1\pmod{n}$ and
\begin{equation}\label{a}
\xbar = \frac{c_2+c_1r_1}{n},\qquad \ybar = \frac{c_1+c_2r_2}{n}
\end{equation}
The pair $(r_1,r_2)$ is uniquely determined modulo $n$ by the
equivalence class of the pair $(x,y)$, and the map from
$X(c_1,c_2,n)$ to the set of pairs $(r_1,r_2)$ modulo $n$ is
injective.
\end{prop}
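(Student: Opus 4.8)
The plan is to produce the pair $(r_1,r_2)$ explicitly, verify the multiplicative relation, and then read off both uniqueness and injectivity directly from the formulas \eqref{a}. First I would solve \eqref{a}, writing $c_1 r_1 = n\xbar - c_2$ and $c_2 r_2 = n\ybar - c_1$, and check integrality: since $c_2 x + c_1 y = n$ and $x\xbar \equiv 1 \pmod{c_1}$, we have $n\xbar - c_2 = c_2(x\xbar - 1) + c_1 y \xbar \equiv 0 \pmod{c_1}$, so $r_1\in\Z$, and symmetrically $r_2\in\Z$. Reducing these two identities modulo $n$ gives the basic congruences $c_1 r_1 \equiv -c_2$ and $c_2 r_2 \equiv -c_1 \pmod n$, which I will use repeatedly.

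The heart of the matter is the congruence $r_1 r_2 \equiv 1 \pmod n$; set $w := r_1 r_2 - 1$. Multiplying $c_1 r_1 \equiv -c_2$ by $r_2$ and substituting $c_2 r_2 \equiv -c_1$ gives $c_1 w \equiv 0 \pmod n$, and symmetrically $c_2 w \equiv 0 \pmod n$. I expect the main obstacle to be that these two relations alone only force $w \equiv 0 \pmod{n/\gcd(c_1,c_2)}$, which is strictly weaker than what is wanted whenever $\gcd(c_1,c_2)>1$; the coprimality hypotheses $(x,c_1)=(y,c_2)=1$ must therefore enter. To bring them in, I would multiply $n\xbar = c_2 + c_1 r_1$ by $x$ and use $x\xbar \equiv 1 \pmod{c_1}$ together with $c_2 x = n - c_1 y$ to obtain the cleaner relation $x r_1 \equiv y \pmod n$, and symmetrically $y r_2 \equiv x \pmod n$. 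Multiplying the first of these by $r_2$ and invoking the second yields $x w \equiv 0 \pmod n$. Since $\gcd(x,c_1)=1$, Bezout produces $\lambda,\mu$ with $\lambda x + \mu c_1 = 1$, and combining $x w \equiv 0$ with $c_1 w \equiv 0$ gives $w = (\lambda x + \mu c_1)w \equiv 0 \pmod n$, i.e. $r_1 r_2 \equiv 1 \pmod n$. The relation $x r_1 \equiv y \pmod n$ is the key device that upgrades the $\gcd(c_1,c_2)$-deficient congruence to the full one.

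Finally, uniqueness modulo $n$ and injectivity both fall out of \eqref{a}. Replacing $\xbar$ by another inverse $\xbar + c_1$ changes $r_1 = (n\xbar - c_2)/c_1$ by exactly $n$, so $r_1 \bmod n$ depends only on $\xbar \bmod c_1$, hence only on the class of $x$ modulo $c_1$; likewise $r_2 \bmod n$ depends only on the class of $y$ modulo $c_2$, so $(r_1,r_2) \bmod n$ is well defined on $\overline{X}(c_1,c_2,n)$. For injectivity I would invert the construction: from $r_1 \bmod n$ the quantity $\xbar = (c_2 + c_1 r_1)/n$ is determined modulo $c_1$, and hence so is $x \bmod c_1$ as its inverse; symmetrically $r_2 \bmod n$ determines $y \bmod c_2$. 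Thus the class of $(x,y)$ in $\overline{X}(c_1,c_2,n)$ is recovered from $(r_1,r_2) \bmod n$, which gives the asserted injectivity.
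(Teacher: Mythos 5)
Your proposal is correct, and the construction, integrality check, uniqueness, and injectivity arguments coincide with the paper's. The one place you genuinely diverge is the core congruence $r_1r_2\equiv 1\pmod n$. The paper proves it by a direct algebraic identity: it expands
\begin{equation*}
r_1r_2 = 1 + n\,\frac{n\xbar\ybar-\xbar c_1-\ybar c_2}{c_1c_2},
\end{equation*}
substitutes $n\xbar\ybar=(c_2x+c_1y)\xbar\ybar$, and rewrites the numerator as $c_2(x\xbar-1)\ybar+c_1(y\ybar-1)\xbar$, which is visibly divisible by $c_1c_2$; thus $r_1r_2-1$ is exhibited as $n$ times an explicit integer in one stroke. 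You instead work purely with congruences: from $c_1r_1\equiv-c_2$ and $c_2r_2\equiv-c_1 \pmod n$ you correctly observe that one only gets $(r_1r_2-1)\equiv 0$ modulo $n/\gcd(c_1,c_2)$, and you repair this with the auxiliary relations $xr_1\equiv y$ and $yr_2\equiv x \pmod n$ (which use $(x,c_1)=1$) plus a Bezout combination $\lambda x+\mu c_1=1$. Both arguments are valid; the paper's is shorter and entirely computational, while yours is more structural, makes explicit \emph{why} the coprimality hypotheses are indispensable, and isolates the relation $y\equiv r_1x\pmod n$ --- which, as it happens, the paper rederives independently later when proving surjectivity in Proposition \ref{bijection}, so your route anticipates a tool the paper needs anyway.
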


\begin{proof} Set
$$
r_1 = \frac{n\xbar-c_2}{c_1},\qquad r_2 = \frac{n\ybar-c_1}{c_2}
$$
Note that $r_1$ is an integer because
$$
n\xbar-c_2 = (c_2x+c_1y)\xbar - c_2 = c_2(x\xbar -
1)+c_1y\xbar\equiv 0 \pmod{c_1}
$$
Similarly, $r_2$ is an integer.

It is clear that $(r_1,r_2)$ is determined by the pair
$(\xbar,\ybar)$.  If we replace $\xbar$ by $\xbar' = \xbar+\mu
c_1$,  $r_1$ is replaced by
$$
r_1' =  r_1+\mu n
$$
Therefore $r'_1\equiv r_1\pmod{n}$ as claimed. Similarly, $(x,y)$
determines $r_2$  modulo $n$.

If two pairs $(x,y)$ and $(x',y')$   in $X(c_1,c_2,n)$ are both
associated to $(r_1,r_2)$, then $\xbar=\overline{x'}$ and $\ybar =
\overline{y'}$. Therefore $x\equiv x'\pmod{c_1}$ and $y\equiv
y'\pmod{c_2}$.

 Finally,
\begin{align*}
r_1r_2 & =  \left(\frac{n\xbar -
c_2}{c_1}\right)\left(\frac{n\ybar - c_1}{c_2}\right)  = 1 +
\frac{n^2\xbar\ybar-n\xbar c_1-n\ybar c_2}{c_1c_2} = 1 +
n\frac{n\xbar\ybar-\xbar c_1-\ybar c_2}{c_1c_2} \end{align*}
 But
 $$
 n\xbar\ybar = (c_2x+c_1y)\xbar\ybar = c_2x\xbar\ybar +  c_1\xbar y\ybar
 $$
 so we have
 \begin{align*}
r_1r_2 &=  1 + n\frac{c_2x\xbar\ybar +  c_1\xbar y\ybar-\xbar c_1-\ybar c_2}{c_1c_2}\\
&=  1 + n\left[\frac{c_2(x\xbar -1)\ybar + c_1(y\ybar - 1)\xbar}{c_1c_2}\right]\\
\end{align*}
The expression in brackets is an integer, so   $r_1r_2\equiv
1\pmod{n}$.
\end{proof}

 \begin{definition} Let $c_1$, $c_2$ be positive integers. Set  $d = (c_1,c_2)$. Assume that $d|n$.
 Let $Y(c_1,c_2,n)$ be the set of  classes $r\in(\Z/n)^*$ such that
\begin{enumerate}
\item[(a')] $(c_1/d)r+(c_2/d) \equiv 0\pmod{\frac{n}{d}}$
\item[(b')] $(c_1/d)r+(c_2/d) \not\equiv 0 \pmod{\frac{n}{d'}}$
if  $d'|d$ and  $d'<d$.
\end{enumerate}
 \end{definition}

 \begin{prop}\label{bijection} The map $i:(x,y)\to r_1$ defines a bijection between $X(c_1,c_2,n)$
 and $Y(c_1, c_2,n)$.
 \end{prop}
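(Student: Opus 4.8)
The plan is to establish that the map $i:(x,y)\mapsto r_1$ lands in $Y(c_1,c_2,n)$, and then build an explicit inverse by reconstructing a pair $(x,y)\in X(c_1,c_2,n)$ from a class $r\in Y(c_1,c_2,n)$. Injectivity of $i$ is already handed to us: by Proposition \ref{inverse} the full map $(x,y)\mapsto(r_1,r_2)$ into $(\Z/n)\times(\Z/n)$ is injective, and since $r_2$ is determined by $r_1$ via the relation $r_1r_2\equiv 1\pmod n$, the single coordinate $r_1$ already separates classes. So the real content is surjectivity onto the correctly-defined target set $Y$, which is where the divisibility conditions (a$'$) and (b$'$) must be matched exactly.

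First I would verify $i$ maps into $Y(c_1,c_2,n)$. Starting from $\xbar=(c_2+c_1r_1)/n$ in \eqref{a}, clearing denominators gives $n\xbar = c_2+c_1r_1$, so $c_1r_1+c_2\equiv 0\pmod n$. Writing $d=(c_1,c_2)$ and dividing through by $d$ should yield condition (a$'$): $(c_1/d)r_1+(c_2/d)\equiv 0\pmod{n/d}$ — here I must confirm that $d\mid n$ is forced, which follows since $c_2x+c_1y=n$ makes $d\mid n$ automatic, matching the standing hypothesis on $Y$. Condition (b$'$) is the delicate one: it asserts that $d$ is the \emph{exact} gcd-type modulus at which the congruence first holds. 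I expect this to come from the coprimality constraints $(x,c_1)=1$ and $(y,c_2)=1$ built into the definition of $X(c_1,c_2,n)$; if the congruence held modulo $n/d'$ for some proper divisor $d'<d$ of $d$, one should be able to derive a common factor of $x$ with $c_1$ (or $y$ with $c_2$), contradicting coprimality. Pinning this implication down precisely is the step I would treat most carefully.

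For surjectivity I would run the construction in reverse. Given $r\in Y(c_1,c_2,n)$, condition (a$'$) guarantees that $c_2+c_1r\equiv 0\pmod n$, so $\xbar:=(c_2+c_1r)/n$ is a well-defined integer; I then let $x$ be the inverse of $\xbar$ modulo $c_1$. The partner $y$ is recovered from the defining equation $c_2x+c_1y=n$, which determines $y\pmod{c_2}$ once $x$ is fixed, and one checks $\ybar$ agrees with $(c_1+c_2r_2)/n$ where $r_2\equiv r^{-1}\pmod n$ (this inverse exists precisely because $r\in(\Z/n)^*$). The pair $(x,y)$ so produced satisfies $(x,c_1)=1$ by construction; the remaining verification $(y,c_2)=1$ is exactly where condition (b$'$) earns its keep, ruling out a nontrivial common divisor.

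The main obstacle, as flagged above, is the two-way correspondence between the exactness clause (b$'$) and the coprimality conditions defining $X(c_1,c_2,n)$. Conditions (a$'$) and the injectivity are essentially bookkeeping on top of Proposition \ref{inverse}, but (b$'$) encodes the arithmetic of how the gcd $d$ interacts with the modulus $n$, and getting the implication to run cleanly in both directions — coprimality $\Rightarrow$ exact modulus, and exact modulus $\Rightarrow$ coprimality — is the crux. I would isolate this as a short divisibility lemma and prove the two directions separately before assembling the bijection.
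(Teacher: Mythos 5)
Your skeleton is the same as the paper's: show $i$ lands in $Y(c_1,c_2,n)$, get injectivity from Proposition \ref{inverse} (your observation that $r_1$ alone separates classes because $r_2\equiv r_1^{-1}\pmod n$ is correct), and prove surjectivity by an explicit reverse construction. Your sketch of the forward direction of (b') is also the paper's argument in spirit: if $(c_1/d)r_1+(c_2/d)\equiv 0\pmod{n/d'}$ for a proper divisor $d'=d/m$ of $d$, then $\overline{x} = \frac{(c_2/d)+(c_1/d)r_1}{n/d}$ is divisible by $m$, and since $m\mid d\mid c_1$ this contradicts the fact that $\overline{x}$ is a unit modulo $c_1$ (equivalently, that $(x,c_1)=1$).

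The genuine gap is in your surjectivity construction, at its very first step. Given $r\in Y$, you set $\overline{x}:=(c_2+c_1r)/n$ and then ``let $x$ be the inverse of $\overline{x}$ modulo $c_1$'' --- but nothing you have established at that point shows this inverse exists, i.e.\ that $(\overline{x},c_1)=1$, and with only (a') in hand it can fail: for $c_1=c_2=n=2$ and $r=1$, condition (a') holds, (b') fails, and $\overline{x}=2$ has no inverse modulo $c_1=2$. This is precisely where (b') must be spent, and where the paper spends it: (b') is equivalent to $(\overline{x},d)=1$ (a prime $q\mid(\overline{x},d)$ makes the congruence hold modulo $n/(d/q)$, and conversely), while $(\overline{x},c_1/d)=1$ comes for free, since a common prime factor would divide $c_2/d$ via $n\overline{x}=c_2+c_1r$, contradicting $(c_1/d,c_2/d)=1$; together these make $\overline{x}$ a unit modulo $c_1$, and only then can $x$ be chosen. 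By contrast, the check for which you reserve (b'), namely $(y,c_2)=1$, needs no appeal to it: writing $x\overline{x}=1+\mu c_1$ and $y=rx-n\mu$, any prime $p\mid(y,c_2)$ divides $n$ (by $c_2x+c_1y=n$), hence $p\nmid r$, hence $p\mid x$ from $p\mid y=rx-n\mu$; and $n\overline{x}=c_2+c_1r$ then forces $p\mid c_1$, contradicting $(x,c_1)=1$. (The paper leaves this last verification implicit.) So your proposed divisibility lemma is the right idea, but its direction ``exact modulus $\Rightarrow$ coprimality'' must be invoked to make $x$ exist at all; as organized, your construction halts before $y$ ever enters the picture.
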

 \begin{proof} Let $(x,y)\in X(c_1,c_2,n)$. We show that the associated $r_1$ belongs to $Y(c_1,c_2,n)$.
Then
$$
\xbar = \frac{ c_1r_1+c_2}{n} = \frac{(c_1/d)r_1+(c_2/d)}{(n/d)}
$$
Therefore, $\frac{c_1r_1}{d}+\frac{c_2}{d}\equiv 0\pmod{\frac{n}{d}}$ and (a') is
satisfied.  Suppose that $m$ is a proper divisor of  $d$
 and let $d'=d/m$. We claim that  $\frac{c_1}dr+\frac{c_2}d\not\equiv 0\pmod{\frac{n}{d'}}.$ If this were not the case,
we would have
$$
\xbar = \frac{(c_2/d)+(c_1/d)r_1}{(n/d)} =
m\frac{(c_2/d)+(c_1/d)r_1}{(n/d')}
$$
This would imply that $m$ divides $\xbar$, which contradicts the
fact that $\xbar$ is a unit modulo $c_1$. Therefore (b') is
satisfied and $r\in Y(c_1,c_2,n)$. Furthermore, the map $i$ is
injective on $X(c_1,c_2,n)$  by Proposition \ref{inverse}. Next,
assume that $Y(c_1,c_2,n)$ is non-empty.
 Let $r\in\Z$ be prime to $n$ and assume that $r\pmod{n}$ belongs to $Y(c_1,c_2,n)$.
Set
$$
\xi = \frac{(c_1/d)r+(c_2/d)}{n/d} = \frac{c_2 +  c_1r}n
$$
Then $\xi$ is relatively prime to $d$ because
$(c_1/d)r+(c_2/d)\not\equiv 0\pmod{n/d'}$ for all proper
divisors $d'$ of $d$. On the other hand, if $q$ is a common factor
of both $\xi$ and  $c_1/d$, then  $q|c_2/d$. But $(c_1/d,c_2/d)=1$
so $q=1$. This proves that $\xi$ is prime to both $d$ and $c_1/d$,
and hence is a unit modulo $c_1$. Now choose $x\in\Z$ such that
$x\xi\equiv 1\pmod{c_1}$ and set $\xbar = \xi$. Then
$$
x\xbar = 1 + \mu c_1
$$
for some $\mu\in\Z$. We claim that there exists $y\in\Z$ such that
$$
c_2 x +  c_1y = n
$$
In fact,
$$
y = \frac{n-c_2x}{c_1}
$$
To show that $y\in \Z$, observe that $c_2 = \xbar n -c_1r$ and so
$$
y = \frac{\left(n - (\xbar n-c_1r)x\right)}{c_1} =
\frac{\left(n(1-x\xbar) +  c_1r_1x\right)}{c_1} =   r_1x  -  n\mu
$$
Thus we have produced a pair $(x,y)\in X(c_1,c_2,n)$ that maps to
$r \pmod{n}$. This proves the surjectivity.
\end{proof}

\section{Rewriting the Geometric Side}
By the Kuznetsov trace formula, the limit (L) is equal to
\begin{equation} (L)  = \lim_{X \to \infty} \frac{1}{X} \sum_{n \in \Z}g(n/X)
\left( \sum_{c_1=1}^\infty \frac{1}{c_1}
S(l,n,c_1)V(4\pi\sqrt{nl}/c_1)\right)\times \end{equation}
$$\left(\sum_{c_2=1}^\infty \frac{1}{c_2}
S(l',n,c_2)W(4\pi\sqrt{nl'}/c_2)\right)$$ We first reorganize the
terms.  \begin{equation}\lim_{X \to \infty} \frac{1}{X} \sum_{n}
g(n/X)  \sum_{c_1,c_2} \frac{1}{c_1 c_2}
S(l,n,c_1)S(l',n,c_2)V(\frac{4\pi \sqrt{nl}}{c_1})W(\frac{4\pi
\sqrt{nl'}}{c_2}).\end{equation} We can do this because the
$c_1,c_2,$ sums are finite.

We now break up the Kloosterman sums and gather all the $n$-terms.

\begin{equation}\label{sm} \lim_{X \to \infty}\frac{1}{X} \sum_{c_1,c_2} \frac{1}{c_1 c_2}\sum_{x  (c_1)^{*}}
 \sum_{y  (c_2)^{*}} e(\frac{\overline{x}l}{c_1}+\frac{\overline{y}l'}{c_2}) \end{equation}
 $$\left\{
 \sum_{n \in \Z}
 e(n(\frac{x}{c_1}+\frac{y}{c_2}))g(n/X)
 V(\frac{4\pi
\sqrt{nl}}{c_1})  W(\frac{4\pi \sqrt{nl'}}{c_2})\right\},
$$
where $\overline{x}$ is the multiplicative inverse of $x$ $(c_1)$
(resp. $\overline{y}$ for $y (c_2)).$ This is allowed because the
support of $g$ is compact, and therefore the sum over $n$ is
finite.

As the term in brackets in \ref{sm} is a smooth function, we can
apply Poisson summation to the $n$-sum to get,

\begin{equation}\label{eq:poisson} \lim_{X \to \infty} \frac{1}{X} \sum_{c_1,c_2} \frac{1}{c_1 c_2}\sum_{x  (c_1)^{*}}
 \sum_{y  (c_2)^{*}} e(\frac{\overline{x}l}{c_1}+\frac{\overline{y}l'}{c_2}) \end{equation}
 $$\left\{
 \sum_m
 \int_{-\infty}^{\infty} e(t(\frac{x c_2 + y c_1}{c_1 c_2}) -tm)g(t/X)
 V(\frac{4\pi
\sqrt{tl}}{c_1})  W(\frac{4\pi \sqrt{tl'}}{c_2})dt\right\}$$

Change of variables $t\rightarrow X t$, gives

\begin{equation} \lim_{X \to \infty} \sum_{c_1,c_2} \frac{1}{c_1 c_2}\sum_{x (c_1)^{*}}
 \sum_{y  (c_2)^{*}} e(\frac{\overline{x}l}{c_1}+\frac{\overline{y}l'}{c_2})\end{equation}
 $$\left\{
 \sum_{m \in \Z}
 \int_{-\infty}^{\infty} e(\frac{Xt(c_2x + c_1y -mc_1c_2)}{c_1c_2})g(t)
 V(\frac{4\pi
\sqrt{Xtl}}{c_1})  W(\frac{4\pi \sqrt{Xtl'}}{c_2})dt\right\} .$$

As we have fixed $l$ and $l'$, we write
$$ I(n,c_1,c_2,X): = \int_{-\infty}^{\infty}
e(\frac{Xtn}{c_1c_2})g(t)
 V(\frac{4\pi
\sqrt{Xtl}}{c_1})  W(\frac{4\pi \sqrt{Xtl'}}{c_2})dt
$$

Then $(L)$ is equal to the limit as $X\to\infty$ of
\[
\sum_{c_1,c_2} \frac{1}{c_1 c_2} \sum_{x (c_1)^{*}}
 \sum_{y  (c_2)^{*}}
e(\frac{\overline{x}l}{c_1}+\frac{\overline{y}l}{c_2}) \sum_{m \in
\mathbf{Z}}
 I(c_2x+c_1y-c_1c_2m,c_1,c_2,X)
\]
Note that for   fixed $X$,  the sums over $c_1$ and $c_2$ sums are
finite. Let $X'(c_1,c_2,n)$ be the set of solutions $(x,y,m)$ of
the equation
$$
c_2x + c_1y -mc_1c_2 = n
$$
where $x$ and $y$ range over a fixed set of representatives of
$(\mathbf{Z}/c_1)^*$ and $(\mathbf{Z}/c_2)^*$, respectively, and
$m\in\mathbf{Z}$. Then $(L)$ is equal to the limit as $X\to\infty$
of
$$
 \sum_{n\in\mathbf{Z}} \sum_{c_1,c_2} \frac{1}{c_1 c_2}\sum_{(x,y,m)\in X'(c_1,c_2,n)}
 e(\frac{\overline{x}l}{c_1}+\frac{\overline{y}l'}{c_2})  I(n,c_1,c_2,X)
$$

Note   that
$$
c_2x + c_1y -mc_1c_2 = c_2(x-mc_1) +c_1y
$$
Therefore, there is a bijection between the set of triples
$(x,y,m)\in X'(c_1,c_2,n)$ and the set of equivalent classes of
pairs $(x',y')$ in $X(c_1,c_2,n)$ from Definition \ref{equiv}. Thus we may replace the sum
over $X'(c_1,c_2,n)$ with a sum over $X(c_1,c_2,n)$:
\[
 (L) = \lim_{X\to\infty}\sum_{n\in\mathbf{Z}} \sum_{c_1,c_2} \frac{1}{c_1 c_2}\sum_{(x,y)\in X(c_1,c_2,n)}
 e(\frac{\overline{x}l}{c_1}+\frac{\overline{y}l'}{c_2})  I(n,c_1,c_2,X)
\]
Finally, let
\begin{equation} A_{n,X}:=\sum_{c_1,c_2} \frac{1}{c_1 c_2}\sum_{(x,y)\in X(c_1,c_2,n)}
 e(\frac{\overline{x}l}{c_1}+\frac{\overline{y}l'}{c_2})  I(n,c_1,c_2,X)\end{equation}
Then
 \begin{equation} (L) = \lim_{X \to
 \infty} \sum_{n \in \mathbf{Z}} A_{n,X}\end{equation}

Now define the standard Ramanajuan sum as $$f_n(m):=\sum_{s(n)^{*}}e(\frac{sm}{n}).$$

For $n=0$ using Lemma \ref{lem:n00} we have
$\overline{x}=-\overline{y}$ and \begin{equation}\label{a0x}
A_{0,X}=\sum_{c_1}
\frac{f_{c_1}(l-l')}{c_1^2}I(0,c_1,c_2,X)\end{equation}

Now for $n \neq 0,$ we can use the bijection of Proposition
\ref{bijection} to rewrite $A_{n,X}$ as a sum over  $r\in
Y(c_1,c_2,n)$:

\[
A_{n,X}=\sum_{r\in(\Z/n)^*} e(\frac{rl+\overline{r}l'}{n})
\sum_{\begin{tabular}{c}$c_1,c_2$\\$r\in
Y(c_1,c_2,n)$\end{tabular}} \frac{1}{c_1 c_2}
 e(\frac{lc_2}{c_1n}+\frac{l'c_1}{c_2n})  I(n,c_1,c_2,X)
 \]

 \begin{definition}
 Let $X(r)$ be the set of pairs $(c_1,c_2)$ such that $r\in Y(c_1,c_2,n)$.
 \end{definition}
\begin{definition}
Let
\begin{equation}\label{fn} F_{n}(x,y):=
 \frac{1}{xy}
e\left(\frac{1}{n}\left(\frac{ly}{x}+\frac{l'x}{y}\right)\right)
\times
\end{equation}

$$\times \left\{
 \int_{-\infty}^{\infty} e(\frac{tn}{xy})g(t)
 V(\frac{4\pi
\sqrt{tl}}{x})  W(\frac{4\pi \sqrt{tl'}}{y})dt\right\} .
$$

\end{definition}

 Now assuming \begin{equation}\label{switch} (L) = \lim_{X \to
 \infty} \sum_{n \in \mathbf{Z}} A_{n,X}=\sum_{n \in \mathbf{Z}} \lim_{X \to
 \infty}A_{n,X}, \end{equation}
the main result of the calculations can be broken down into the
cases: $n=0,$ and $n\neq 0.$

In Section \ref{ann} we show \begin{equation}\label{a0c}
\lim_{X \to
 \infty}A_{0,X}= \frac{6\delta_{l,l'}}{\pi^2}
\int_0^\infty V(y)W(y)\frac{dy}{y},
\end{equation}
where $\delta_{l,l'}$ is the Kronecker delta function.

While for $n\neq 0,$ and for all $r\in(\Z/n)^*,$
\begin{equation}\label{maincalc}
 \lim_{X\to\infty}\sum_{(c_1,c_2)\in X(r)} \frac{1}{c_1 c_2}
 e(\frac{lc_2}{c_1n}+\frac{l'c_1}{c_2n})  I(n,c_1,c_2,X) = \frac{6}{\pi^2}\frac1n\int_{0}^{\infty}
\int_{0}^{\infty} F_n(x,y)dxdy
\end{equation}

Summing this result for $r\in(\Z/n)^*,$ we get $$A_{n,X}=
\frac{6}{\pi^2} \frac{S(l,l',n)}{n}\int_{0}^{\infty}
\int_{0}^{\infty} F_n(x,y)dxdy.$$

The results from Section \ref{ann} then show
$$ (L)= \frac{6}{\pi^2}  \left(\delta_{l,l'}\int_0^\infty
V(y)W(y)\frac{dy}{y} +\sum_{n=1}^\infty
 \frac{S(l,l',n)}{n}\int_{0}^{\infty}
\int_{0}^{\infty} F_n(x,y)dxdy\right).$$

In Section \ref{sears}, $(L)$ is shown to be the geometric side of
a Kuznetsov trace formula. Taking the spectral side of this trace
formula completes Theorem \ref{main theorem1}[i.]. Reducing this to Rankin-Selberg orthogonality for individual cusp forms then occupies Sections $9,10,$ and $11.$

\section{Calculation for $A_{n,X}$}\label{ann}

\subsection{Case 1: $n\neq 0$}

Now fix $r,$ then by Proposition \ref{bijection}, $X(r)$ is the
set of  $(c_1,c_2)$ such that, setting $d=(c_1,c_2)$, we have
 \begin{enumerate}
 \item  $\frac{c_1}{d}, \frac{c_2}{d}$ are both prime to $\frac{n}{d}.$

 \item $\frac{c_1r+c_2}{d}\equiv 0\pmod{\frac{n}d}$
 \item
 $\frac{c_1r+c_2}{d}\not\equiv 0\pmod{\frac{n}{d'}}$ if $d'$ is a proper divisor of $d$.
 \end{enumerate}

 Now for each divisor $d$ of $n$,  let $X(r,d)$ be the set of pairs $(c_1,c_2)$ in $X(r)$ such that $(c_1,c_2)=d$.
 We would like to prove that there is a constant $R(n,d)$ such that
 \begin{equation}\label{maincalc2}
 \lim_{X\to\infty}\sum_{(c_1,c_2)\in X(r,d)} \frac{1}{c_1 c_2}
 e(\frac{lc_2}{c_1n}+\frac{l'c_1}{c_2n})  I(n,c_1,c_2,X) = R(n,d)\frac{6}{\pi^2}\frac1n\int_{0}^{\infty}
\int_{0}^{\infty} F_n(x,y)dxdy
\end{equation}
 and $$\sum_{d|n}R(n,d)=1$$

 Let us describe $X(r,d)$ explicitly. If $(c_1,c_2)\in X(r,d)$, then
  \begin{equation}\label{congr}
  \frac{c_2}d = -\frac{c_1}d\,r + \lambda\frac{n}d
  \end{equation}

  \begin{lemma} Fix  $c_1$  such that $c_1/d$ is prime to $n/d$. Let $\lambda$ be a whole number and define
  $c_2$ by (\ref{congr}). Then $(c_1,c_2)\in X(r,d)$ if and only if $(\lambda,c_1)=1$.
  \end{lemma}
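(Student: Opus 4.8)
The plan is to reduce everything to elementary gcd computations after clearing the common factor $d$. Write $a=c_1/d$, $b=c_2/d$, and $N=n/d$, so that by hypothesis $(a,N)=1$, while (\ref{congr}) reads $b=-ar+\lambda N$, equivalently $c_2=-c_1 r+\lambda n$. The first observation is that
\[
\frac{c_1 r+c_2}{d}=\lambda N,
\]
which is divisible by $N=n/d$; hence condition (2) in the description of $X(r)$ holds automatically, and I never have to revisit it.

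Next I would dispose of the remaining ``automatic'' constraint. Since $b\equiv -ar\pmod{N}$ and $(a,N)=1$, we get $(b,N)=(r,N)$; because $r\in(\Z/n)^*$ and $N\mid n$ we have $(r,N)=1$, so $(b,N)=1$ and the second half of condition (1) is free as well. Thus membership in $X(r,d)$ comes down to exactly two genuine requirements: that the gcd $(c_1,c_2)$ equal $d$, and that condition (3) hold.

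The core of the argument is to turn each of these into a coprimality statement for $\lambda$. For the gcd, note $(c_1,c_2)=d\,(a,b)$ and $(a,b)=(a,-ar+\lambda N)=(a,\lambda N)=(a,\lambda)$, the last step again using $(a,N)=1$; hence $(c_1,c_2)=d$ if and only if $(c_1/d,\lambda)=1$. For condition (3), using $(c_1 r+c_2)/d=\lambda N$ one checks that $\lambda N\equiv 0\pmod{n/d'}$ for a proper divisor $d'\mid d$ is equivalent to $(d/d')\mid\lambda$; letting $d'$ range over proper divisors of $d$ makes $m=d/d'$ range over divisors of $d$ exceeding $1$, so condition (3) fails precisely when some $m>1$ with $m\mid d$ divides $\lambda$, i.e. exactly when $(\lambda,d)>1$. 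Therefore condition (3) is equivalent to $(\lambda,d)=1$.

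Combining the two, $(c_1,c_2)\in X(r,d)$ if and only if $(\lambda,c_1/d)=1$ and $(\lambda,d)=1$ simultaneously, and since $c_1=d\cdot(c_1/d)$ this is exactly $(\lambda,c_1)=1$, which proves both implications at once. The only step demanding care is the translation of the minimality condition (3): one must verify that the two derived constraints $(\lambda,c_1/d)=1$ and $(\lambda,d)=1$ are genuinely independent (neither implies the other, so both are needed) and that together they repackage cleanly as $(\lambda,c_1)=1$.
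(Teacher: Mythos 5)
Your proof is correct and follows essentially the same route as the paper's: condition (2) is automatic from (\ref{congr}), condition (1) together with $(c_1,c_2)=d$ is shown equivalent to $(\lambda,c_1/d)=1$, the minimality condition (3) is shown equivalent to $(\lambda,d)=1$, and the two coprimality constraints combine into $(\lambda,c_1)=1$. The only cosmetic difference is that you argue via gcd identities such as $(a,b)=(a,\lambda N)=(a,\lambda)$ and observe that $(c_2/d,n/d)=1$ holds independently of $\lambda$, whereas the paper runs prime-by-prime contradiction arguments; the substance is identical.
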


  \begin{proof}
We show first that $(\lambda,c_1/d)=1$ if and only if
$\frac{c_2}{d}$ is relatively prime to $\frac{n}{d}$ and
$\frac{c_1}{d}.$ Assume that  $(\lambda,c_1/d)=1$. If  $p$
divides both $c_1/d$ and $c_2/d$,  then (\ref{congr}) gives
$p|n/d$, which is a contradiction. And if $p$ divides $c_2/d$ and
$n/d$, then (\ref{congr}) gives $p|r(c_1/d)$. But $(r,n)=1$, so
this implies that $p$ divides $c_1/d$ -- again a contradiction.

On the other hand, if $q = (\lambda,c_1/d)>1$, then $q$ divides
$c_2/d$. In this case, $\frac{c_2}d$ is not relatively prime
$\frac{c_1}d.$

If $d=1$, the only requirement is $(\lambda,c_1/d)=1$, i.e.,
$(c_1,c_2)\in X(r,d)$ if and only if $(\lambda,c_1)=1$.

If $d>1$, we must also require that
\begin{equation}\label{repeat}
 \frac{c_1r+c_2}n\not\equiv 0 \pmod{p\frac{n}d}
 \end{equation}
 for all $p|d$. But
 $$
  \frac{c_1r+c_2}d = \lambda\frac{n}d
 $$
Therefore (\ref{repeat}) holds if and only if $\lambda\not\equiv
0\pmod{p}$ for all $p|d$. In other words, $\lambda$ must be
relatively prime to $d$ as well. But the two conditions
$(\lambda,c_1/d)=1$ and $(\lambda,d)=1$ are equivalent to
$(\lambda,c_1)=1$.

\end{proof}
Perhaps it is more convenient to replace the pair $(c_1,c_2)$ with
a pair $(dc_1,dc_2)$ where $c_1, c_2$ are relatively prime to each
other and to $n/d$. Then $X(r,d)$ is describe by pairs
$(c_1,\lambda)$ and the left-hand side of
  (\ref{maincalc2}) is equal to
 \begin{equation}\label{maincalc3}
 \lim_{X\to\infty}\sum_{c_1: (c_1,n/d)=1} \sum_{ (\lambda,dc_1)=1 } \frac{1}{d^2c_1 c_2}
 e(\frac{lc_2}{c_1n}+\frac{l'c_1}{c_2n})  I(n,dc_1,dc_2,X)
\end{equation}
where
$$
 c_2 = -c_1\,r + \lambda\frac{n}d
 $$

\begin{definition}
Let
$H_{n}(x,y):=\frac{1}{xy}e(\frac{xl'}{ny}+\frac{yl}{nx})I(n,x,y,1).$

\end{definition}

Then (\ref{maincalc3}) equals
$$\lim_{X\to\infty}\frac{1}{X} \sum_{c_1: (c_1,n/d)=1} \sum_{
(\lambda,dc_1)=1 } H_n(\frac{dc_1}{\sqrt{X}},
\frac{dc_2}{\sqrt{X}}).$$
 
We prove
 \begin{prop}\label{Calc}
There exists a $1/2< \sigma_0 < 1,$ such that
\begin{equation}\label{mc}\frac{1}{X} \sum_{c_1: (c_1,n/d)=1} \sum_{ (\lambda,dc_1)=1 }
H_n(\frac{dc_1}{\sqrt{X}},
\frac{dc_2}{\sqrt{X}})=R(n,d)\frac{6}{\pi^2}\frac1n\int_{0}^{\infty}
\int_{0}^{\infty} F_n(x,y)dxdy + O(\frac{1}{n^2
X^{(1-\sigma_0)/2}})
\end{equation}
 where $$
 c_2 = -c_1\,r + \lambda\frac{n}d.
 $$ and $$\sum_{d|n}R(n,d)=1$$

The implied constant is independent of $n$ and $X.$
\end{prop}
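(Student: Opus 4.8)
The plan is to read the double sum as a Riemann sum for a two-dimensional lattice-point count carrying a smooth, compactly supported weight, and to extract the density constant $\tfrac{6}{\pi^2}=\zeta(2)^{-1}$ from the coprimality conditions. First I would note that, after the relabeling in the definitions, the summand $H_n(dc_1/\sqrt X, dc_2/\sqrt X)$ is literally the function $F_n$, so the integral on the right is $\int\int H_n$. With $c_2=-c_1 r+\lambda(n/d)$ the pair $(dc_1,dc_2)$ runs over the lattice generated by $(d,-dr)$ and $(0,n)$, whose covolume is $dn$; after scaling by $1/\sqrt X$ the covolume is $dn/X$. Since $V,W$ are supported away from $0$ and $g$ has compact support, $H_n$ is $C_0^\infty$ in the scaled variables $(u,v)$ on a region bounded away from the axes, so the naive lattice heuristic already predicts $\tfrac1X\sum\approx \tfrac1{dn}\int\int H_n$, which has the correct shape $\propto \tfrac1n\int\int F_n$.

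To make this precise and to produce the arithmetic factor, I would strip the coprimality conditions by M\"obius inversion: write $\sum_{(\lambda,dc_1)=1}(\cdots)=\sum_{e\mid dc_1}\mu(e)\sum_{\lambda\equiv 0\,(e)}(\cdots)$, and treat $(c_1,n/d)=1$ either by a second M\"obius sum or as a fixed residue condition modulo $n/d$. For fixed $c_1$ and $e$ the inner sum over $\lambda$ is one-dimensional along the $v$-direction with spacing $en/\sqrt X$; Poisson summation (equivalently Euler--Maclaurin) replaces it by $\tfrac{\sqrt X}{en}\int H_n\,dv$ plus dual-frequency terms. Because $H_n\in C_0^\infty$, its partial Fourier transform is Schwartz, so the nonzero frequencies contribute a negative power of $X$. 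Summing the resulting one-dimensional integrals over $c_1$ coprime to $n/d$ is again a Riemann sum, now in the $u$-direction, which I would likewise convert to $\int\int H_n\,du\,dv$ with a further power-of-$X$ error.

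Collecting the local densities is then bookkeeping: the M\"obius sums $\sum_e \mu(e)/e$ together with the constraints at primes dividing $d$ and $n/d$ multiply out, by multiplicativity and the Euler product $\prod_p(1-p^{-2})=\tfrac{6}{\pi^2}$, into $\tfrac{6}{\pi^2}$ times an explicit arithmetic factor, which I would \emph{define} to be $R(n,d)$. The identity $\sum_{d\mid n}R(n,d)=1$ I would obtain as a consistency check: summing \eqref{maincalc2} over all $d\mid n$ must reconstruct the unrestricted sum over $X(r)$, whose limiting value is exactly $\tfrac{6}{\pi^2}\tfrac1n\int\int F_n$ with coefficient $1$; this forces the $R(n,d)$ to sum to $1$, and concretely reduces to a divisor-sum identity among the local factors.

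The hard part will be the error analysis, made uniform in $n$. The weight $H_n$ oscillates through $e(tn/(xy))$ inside $I(n,x,y,1)$ and through the prefactor $e(xl'/(ny)+yl/(nx))$, so differentiating $H_n$ in the scaled variables produces factors of $n$, while repeated integration by parts in $t$ — the phase $tn/(xy)$ having no stationary point on the compact support — makes $H_n$ and its integral decay in $n$. The Poisson dual-frequency terms and the Riemann-sum remainders are governed precisely by these high derivatives, so the delicate point is to choose the number of integrations by parts, in both the Poisson step and the $t$-integral, so that the competing powers of $n$ collapse to the stated $n^{-2}$ while a genuine saving $X^{-(1-\sigma_0)/2}$ with $1/2<\sigma_0<1$ survives, all with constants independent of $n$ and $X$. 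Controlling this balance, rather than the lattice-counting skeleton, is the real content of the proposition.
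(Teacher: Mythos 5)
Your skeleton---Poisson summation in the $\lambda$-direction, extraction of the zero frequency as the main term, arithmetic densities producing $\tfrac{6}{\pi^2}$ times local factors, and oscillation in $n$ to control everything else---is in substance the paper's own proof. The differences are implementational: the paper writes $\lambda=s+dc_1q$, applies Poisson in $q$, and the $s$-sum produces the Ramanujan sums $f_{dc_1}(m)$; the $c_1$-sum is then handled not as a Riemann sum but by Mellin inversion against the Dirichlet series $L(s)=\sum_{(c_1,n/d)=1}f_{dc_1}(m)(dc_1)^{-s-1}$, whose pole at $s=1$ (for the zero frequency $m=0$) yields the density $\tfrac{6}{\pi^2}\prod_{p|n}(1+\tfrac1p)^{-1}Z(d,1)=\tfrac{6}{\pi^2}R(n,d)$, and whose contour shift to $\Re(s)=\sigma_0$ produces the $X^{-(1-\sigma_0)/2}$ error; your M\"obius-plus-mean-value treatment of the weight $\phi(dc_1)/(dc_1)$ is the elementary counterpart and would give the same constants. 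One caution: your assertion that the nonzero dual frequencies contribute a negative power of $X$ "because the partial Fourier transform is Schwartz" is not true in the regime where your M\"obius divisor satisfies $en\gg\sqrt X$ (the dual spacing $\sqrt X/(en)$ is then $\ll 1$), and there the entire saving must come from the decay in $n$ obtained by integrating by parts in $t$; you do flag this balance at the end, and the paper executes it by playing \eqref{eq:Dec} (with $M=2$) against \eqref{eq:Zerr} (with $k=4$) to land on $O\bigl(\tfrac{1}{(nm)^2X^{(1-\sigma_0)/2}}\bigr)$.

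The genuine gap is your derivation of $\sum_{d|n}R(n,d)=1$. You propose to obtain it as a consistency check from the fact that the unrestricted sum over $X(r)$ has limiting value exactly $\tfrac{6}{\pi^2}\tfrac1n\int_0^\infty\int_0^\infty F_n(x,y)\,dx\,dy$ with coefficient $1$. But that fact is not available independently: it is precisely \eqref{maincalc}, which the paper can only prove by summing \eqref{maincalc2} over $d\mid n$ and then invoking $\sum_{d|n}R(n,d)=1$. There is no way to evaluate the unrestricted $X(r)$-sum without the stratification by $d$, because membership in $X(r)$ is itself defined through congruence conditions involving $d=(c_1,c_2)$; so "the coefficient must be $1$" is circular. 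The identity has to be verified directly from whatever explicit formula for $R(n,d)$ your density computation produces---the paper does this in a separate lemma, reducing by multiplicativity to $n=p^l$ and summing the geometric series $\sum_{j}\phi(p^j)p^{-2j}$ together with the tail $\sum_{k\ge0}\phi(p^{l+k})p^{-2l-2k}$ against the factor $(1+\tfrac1p)^{-1}$. Your closing phrase ("reduces to a divisor-sum identity among the local factors") names the right task, but it is a computation you must actually carry out, not a consequence of the reconstruction argument.
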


\begin{proof}
The LHS of \ref{mc} equals
\begin{equation}\label{mc1}\frac{1}{X} \sum_{c_1: (c_1,n/d)=1} \sum_{ (\lambda,dc_1)=1 }
H_n(\frac{dc_1}{\sqrt{X}}, \frac{d(-c_1r+\frac{\lambda
n}{d})}{\sqrt{X}})\end{equation}
 Now
  fix $c_1,$ and define $$G(m):=H_n(\frac{d
 c_1}{\sqrt{X}},m).$$ Then the condition $(\lambda,dc_1)=1,$ is
 equivalent to $\lambda=s+dc_1q,$ for $1\leq s < dc_1, (s,dc_1)=1,
 q \in \Z.$
 We now fix an $s,$ and perform Poisson summation for the sum over $q,$ $$\sum_{q \in  \Z } G(\frac{-dc_1r +ns +
 ndc_1q}{\sqrt{X}}).$$
We get $$ \sum_{m \in \Z} \int_{-\infty}^{\infty}G(\frac{-dc_1r
+ns + ndc_1t}{\sqrt{X}})e(-mt)dt.$$ With a change of variables we
are left with \begin{equation}   \frac{\sqrt{X}}{ndc_1 }\sum_{m
\in \Z} e\left(\frac{(ns-dc_1
r)m}{ndc_1}\right)\widehat{G}(\frac{m\sqrt{X}}{
ndc_1}),\end{equation} where $\widehat{G}$ is the Fourier
transform of $G.$ Here
$$\widehat{G}(\frac{m\sqrt{X}}{ndc_1})=  \int_{-\infty}^{\infty}
e\left(\frac{1}{n}\left(\frac{l\sqrt{X}y}{dc_1}+\frac{l'dc_1}{\sqrt{X}y}\right)\right)
\times$$
$$ \left\{
 \int_{-\infty}^{\infty} e(\frac{\sqrt{X}tn}{dc_1y})g(t)
 V(\frac{4\pi\sqrt{ltX}}{dc_1})  W(\frac{4\pi
\sqrt{l't}}{y})dt \right\} e(\frac{- m
\sqrt{X}y}{ndc_1})\frac{dy}{y}.$$

the sum over $s$ gives
$$\sum_{s(dc_1)^{*}}e\left(\frac{ms}{dc_1}\right)=\mu\left(\frac{dc_1}{(m,dc_1)}\right)\frac{\phi(dc_1)}{\phi(\frac{dc_1}{(m,dc_1)})}.$$
We denote $$f_l(n):=
\mu\left(\frac{l}{(l,n)}\right)\frac{\phi(l)}{\phi(\frac{l}{(l,n)})}.$$
See (\cite{IK}).
 Note if $m=0,$ we have $\phi(dc_1).$

Therefore, we have
\begin{equation}\label{big} \frac{1}{n\sqrt{X}} \sum_{c_1:
(c_1,n/d)=1}\frac{f_{dc_1}(m)}{dc_1}  \sum_{m \in \Z}
e\left(\frac{-rm}{n}\right)P_{m,n}(\frac{d
c_1}{\sqrt{X}}),\end{equation} where
$$P_{m,n}(x):=\frac{1}{x}\int_{-\infty}^{\infty}
e\left(\frac{1}{n}\left(\frac{ly}{x}+\frac{l'x}{y}\right)\right)
\times$$
$$ \left\{
 \int_{-\infty}^{\infty} e(\frac{tn}{xy})g(t)
 V(\frac{4\pi\sqrt{lt}}{x})  W(\frac{4\pi
\sqrt{l't}}{y})dt \right\}e(\frac{- m y}{nx}) \frac{dy}{y}.$$

We define the Mellin transform of a function $F$ as $$
\widetilde{F} (s)= \int_0^{\infty}
F(x)x^{s}\frac{dx}{x}.
$$
 Since $P_{m,n}$ is smooth of compact support, integration by parts $M$
 times implies
 \begin{equation}\label{eq:Dec}\widetilde{P}_{m,n}(s)=O_M\left(\left(\frac{n}{(m(1+|t|))}\right)^{M}\right),\end{equation} where $s=\sigma +it.$ We now
 interchange the $c_1$ and $m$ sum. This is ok because the $c_1$ sum is finite. We now fix $m$ and study   \begin{equation}   \frac{1}{n\sqrt{X}} e(\frac{-rm}{n})\sum_{c_1:
(c_1,n/d)=1}\frac{f_{dc_1}(m)}{dc_1} \frac{1}{2\pi
i}\int_{Re(s)=\sigma_1}\widetilde{P}_{m,n}(s)(\frac{\sqrt{X}}{dc_1})^{s}ds
,\end{equation} where $\sigma_1$ is taken large enough to ensure
convergence.

As the $c_1$ sum is finite, we can interchange it and the integral
to get
\begin{equation} \label{LS}  \frac{1}{2\pi
ni\sqrt{X}}e(\frac{-rm}{n})\int_{Re(s)=\sigma_1}\widetilde{P}_{m,n}(s)L(s)(\sqrt{X})^{s}ds
,\end{equation} where $$L(s)=\sum_{c_1: (c_1,n/d)=1}
\frac{f_{dc_1}(m)}{(dc_1)^{s+1}}.$$

We have 2 parts: $m=0,$ and $m\neq 0.$

\subsubsection{Part 1} For $m=0,$ \begin{equation}L(s)=\left(\sum_{\substack{d'=\prod_{p} p^j \\
p|d,p\nmid
\frac{n}{d}}}\frac{\phi(d'd)}{(d'd)^{1+s}}\right)\left(\sum_{c_1:(c_1,n)=1}
\frac{\phi(c_1)}{c_1^{1+s}}\right).\end{equation}

For simplicity, define $$Z(d,s):=\left(\sum_{\substack{d'=\prod_{p} p^j \\
p|d,p\nmid \frac{n}{d}}}\frac{\phi(d'd)}{(d'd)^{1+s}}\right).$$

Then $$\label{Lf} L(s)=Z(d,s)\frac{L(s,\chi_0)}{L(s+1,\chi_0)}
$$
where $\chi_0$ is the trivial Dirichlet character modulo $n.$ It
has a pole at $s=1.$

Now we shift the contour in (\ref{LS}) from
$Re(s)=\sigma_1\rightarrow 3/4.$ The pole at $s=1$ has residue
$\frac{12}{\pi} \prod_{p|n} \frac{1}{(1+\frac{1}{p})}Z(d,1),$ and
rewrite (\ref{LS}) in the case of $m=0$ as
$$ \frac{6}{n\pi^2} \prod_{p|n}
\frac{1}{(1+\frac1{p})}Z(d,1)\widetilde{P}_{0,n}(1) +
\frac{1}{2\pi ni\sqrt{X}}
\int_{Re(s)=3/4}\widetilde{P}_{0,n}(s)L(s)(\sqrt{X})^{s}ds.$$

Now remember
\begin{equation}\label{fn} F_{n}(x,y):=
 \frac{1}{xy}
e\left(\frac{1}{n}\left(\frac{ly}{x}+\frac{l'x}{y}\right)\right)
\times
\end{equation}

$$\times \left\{
 \int_{-\infty}^{\infty} e(\frac{tn}{xy})g(t)
 V(\frac{4\pi
\sqrt{tl}}{x})  W(\frac{4\pi \sqrt{tl'}}{y})dt\right\} .
$$

 Integration by parts $k-$times in
(\ref{fn}) gives \begin{equation}\label{eq:Zerr}
F_n(x,y)=O_{x,y}\Bigl(\frac{1}{n^k}\Bigl).\end{equation}

Using trivial bounds on the integral and the bound \eqref{eq:Zerr}, we have
\begin{equation}\label{eq:alfn} \frac{6}{n\pi^2} \prod_{p|n}
\frac{1}{(1+\frac1{p})}Z(d,1)\widetilde{P}_{0,n}(1) +
O_n(\frac{1}{n^2 X^{1/8}}).\end{equation}

\subsubsection{Part 2} For $m \neq 0,$ the arguments are similar, but the
L-function equals \begin{equation}L(s)=\left(\sum_{\substack{d'=\prod_{p} p^j \\
p|d,p\nmid
\frac{n}{d}}}\frac{\phi(d'd)}{(d'd)^{1+s}}\right)\left(\sum_{c_1:(c_1,n)=1}
\frac{f_{c_1}(m)}{c_1^{1+s}}\right).\end{equation} As everything is multiplicative, we can
rewrite it as $Z(d,s)M(s),$ where

$$M(s)=\frac{1}{\zeta(s+1)} \sum_{\ell |n} \frac{\mu^2(\ell)}{\ell^{1+s}} \sum_{d |m} \frac{\mu(d)}{d^{1+s}}.$$ 

Now $Z(d,s)$ is entire, and  $M(s)$ is analytic for $\Re(s)>0.$
There exists $\sigma_0 < 1$ such that $\zeta(1+\sigma_0 +it) \neq 0$ for all $t \in \R.$ 
We shift the contour of the
integral to $\Re(s)=\sigma_0$ and get
\begin{equation}\label{eq:mm}\frac{1}{2\pi ni\sqrt{X}}
\int_{\Re(s)=\sigma_0}\widetilde{P}_{m,n}(s)L(s)(\sqrt{X})^{s}ds\end{equation}

To bound \eqref{eq:mm}, we use the bounds \eqref{eq:Dec} and \eqref{eq:Zerr}. Specifically, we can choose $M=2$ for \eqref{eq:Dec} and $k=M+2=4$ for \eqref{eq:Zerr}. This gives the bound  \begin{equation}\label{eq:Dac} \frac{1}{2\pi ni\sqrt{X}}
\int_{\Re(s)=\sigma_0}\widetilde{P}_{m,n}(s)L(s)(\sqrt{X})^{s}ds=O(\frac{1}{(nm)^2X^{(1-\sigma_0)/2}}).\end{equation}

Now for both cases $m=0$ and $m\neq0$, we have estimates \eqref{eq:alfn} and \eqref{eq:Dac}, respectively, to get (\ref{big}) equaling
\begin{equation} \frac{6}{n\pi^2} \prod_{p|n}
\frac{1}{(1+\frac1{p})}Z(d,1)\widetilde{P}_{0,n}(1) +
O(\frac{1}{n^2 X^{(1-\sigma_0)/2}}),\end{equation} after executing the $m$-sum.

Finally, notice $\widetilde{P}_{0,n}$ is $F_{n},$ and we have
\begin{equation}\label{fin}\frac{6}{n\pi^2} \prod_{p|n}
\frac{1}{(1+\frac1{p})}Z(d,1)\int_{0}^{\infty} \int_{0}^{\infty}
F_{n}(x,y)dxdy + O(\frac{1}{n^2 X^{(1-\sigma_0)/2}}).\end{equation}

Now let $$R(n,d):=Z(d,1)\prod_{p|n} \frac{1}{(1+\frac1{p})}.$$

\begin{lemma}
$\sum_{d \mid n} R(n,d) = 1.$
\end{lemma}

\begin{proof}
It suffices to do this for $n=p^l,$ $p$ a prime, $l \in \N.$ We
note in this case $d=p^i, 0 \leq i \leq l.$ Here
$$R(p^l,1)=\frac{1}{(1+\frac1{p})},$$ and
$$R(p^l,p^j)=\frac{1}{(1+\frac1{p})}\left(
\frac{\phi(p^j)}{p^{2j}}\right),$$ for $1<j<l.$ Lastly,
$$R(p^l,p^l)=\frac{1}{(1+\frac1{p})}\left(\sum_{k=0}^{\infty}
\frac{\phi(p^{l+k})}{p^{2l+2k}}\right).$$ Thus we only have to
prove
 \begin{equation}\label{pp} 1+ \left(\sum_{j=1}^{l-1}
\frac{\phi(p^j)}{p^{2j}}\right) + \left(\sum_{k=0}^{\infty}
\frac{\phi(p^{l+k})}{p^{2l+2k}}\right)=1+\frac1{p}.\end{equation}

For the middle sum of (\ref{pp}), we get $$\left(\sum_{j=1}^{l-1}
\frac{\phi(p^j)}{p^{2j}}\right)=(1-\frac{1}{p})\sum_{j=1}^{l-1}
\frac{1}{p^j}=\frac{p^{l-1}-1}{p^l}.$$

For the last sum of (\ref{pp}). we have
$$\left(\sum_{k=0}^{\infty}
\frac{\phi(p^{l+k})}{p^{2l+2k}}\right)=\frac{(1-\frac{1}{p})}{p^l}\sum_{k=0}^\infty
\frac{1}{p^k}=\frac{(1-\frac{1}{p})}{p^l(1-\frac{1}{p})}=\frac{1}{p^l}.$$

Summing the 3 terms then gives $$1+\frac{p^{l-1}-1}{p^l} +
\frac{1}{p^l}=1+\frac{1}{p}.$$

\end{proof}
This completes Proposition \ref{Calc}.

\end{proof}

\subsection{Case 2: $n=0$}

From (\ref{a0x}) we have,
\begin{equation}  \sum_{c_1}
\frac{f_{c_1}(l-l')}{c_1^2}I(0,c_1,c_2,X)=\end{equation}
$$= \sum_{c_1} \frac{f_{c_1}(l-l')}{c_1^2} \left\{
\int_{-\infty}^{\infty} g(t)
 V(\frac{4\pi
\sqrt{Xlt}}{c_1})  W(\frac{4\pi \sqrt{Xl't}}{c_1})dt\right\} .$$

\begin{prop}\label{RS_0}

\begin{equation}\label{eq:ia}
 \sum_{c_1} \frac{f_{c_1}(l-l')}{c_1^2} \left\{
\int_{-\infty}^{\infty} g(t)
 V(\frac{4\pi
\sqrt{Xlt}}{c_1})  W(\frac{4\pi \sqrt{Xl't}}{c_1})dt\right\} =
\end{equation}

\begin{equation*}= \frac{6\delta_{l,l'}}{\pi^2} \int_0^{\infty}
\int_0^\infty V(y)W(y)\frac{dy}{y}+O(\frac{1}{X^{3/4}}).
\end{equation*}
The implied constant is independent of $X.$
\end{prop}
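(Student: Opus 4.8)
The plan is to collapse the $n=0$ term to a single Dirichlet series twisted by a Ramanujan sum and to read off the main term from the pole of $\zeta$. By Proposition~\ref{lem:n00} only the diagonal $c_1=c_2$ survives with $\overline{x}=-\overline{y}$, so the inner $x,y$ sum is exactly the Ramanujan sum $f_{c_1}(l-l')$; this is why \eqref{a0x} is the starting point and the left side of \eqref{eq:ia} is the object to analyze. The first observation is that the $t$-integral depends on $c_1$ and $X$ only through $\xi=c_1/\sqrt{X}$: since $4\pi\sqrt{Xlt}/c_1=4\pi\sqrt{lt}/\xi$, I set
$$\mathcal J(\xi):=\int_{-\infty}^\infty g(t)\,V\!\Big(\frac{4\pi\sqrt{lt}}\xi\Big)W\!\Big(\frac{4\pi\sqrt{l't}}\xi\Big)\,dt,$$
which is smooth and compactly supported on $\R^+$ because $V,W$ are. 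Thus the left side of \eqref{eq:ia} equals $S(X):=\sum_{c_1\ge1}f_{c_1}(l-l')\,c_1^{-2}\,\mathcal J(c_1/\sqrt{X})$.

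Next I would apply Mellin inversion to $\mathcal J$. With $\widetilde{\mathcal J}(s)=\int_0^\infty\mathcal J(\xi)\xi^{s}\frac{d\xi}\xi$ (entire and rapidly decaying in vertical strips) and $\xi^{-s}=c_1^{-s}X^{s/2}$, Fubini gives, for $\sigma=\Re(s)$ large,
$$S(X)=\frac1{2\pi i}\int_{(\sigma)}\widetilde{\mathcal J}(s)\,X^{s/2}\,D(s)\,ds,\qquad D(s):=\sum_{c_1\ge1}\frac{f_{c_1}(l-l')}{c_1^{2+s}}.$$
The key input is the classical evaluation of the Ramanujan-sum Dirichlet series: writing $\sigma_w(m)=\sum_{d|m}d^{w}$, one has $D(s)=\zeta(1+s)/\zeta(2+s)$ when $l=l'$ (because $f_c(0)=\phi(c)$), and $D(s)=\sigma_{-1-s}(l-l')/\zeta(2+s)$ when $l\ne l'$, which is holomorphic for $\Re(s)>-1$. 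The main term then arises by shifting the contour left and crossing the simple pole of $\zeta(1+s)$ at $s=0$, which is present only when $l=l'$. Its residue is $\widetilde{\mathcal J}(0)/\zeta(2)=\frac6{\pi^2}\widetilde{\mathcal J}(0)$, and a Fubini computation (substitute $y=4\pi\sqrt{lt}/\xi$, use $\int_0^\infty g(t)\,dt=1$) gives $\widetilde{\mathcal J}(0)=\int_0^\infty V(y)W(y)\frac{dy}y$; this is precisely the claimed main term $\frac{6\delta_{l,l'}}{\pi^2}\int_0^\infty V(y)W(y)\frac{dy}y$. The case $l\ne l'$ is identical but has no pole at $s=0$, so its whole contribution is pushed into the error term, producing the factor $\delta_{l,l'}$.

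The main obstacle is the error term. Because the residue carries no power of $X$ and the remaining integral is of size $X^{\sigma/2}$, reaching $O(X^{-3/4})$ forces $\sigma=-3/2$, i.e. $\Re(2+s)=1/2$, so one must move the contour of $1/\zeta(2+s)$ into the critical strip. The delicate point is therefore controlling $1/\zeta(2+s)$ there: I would shift to $\Re(s)=-\tfrac32-\epsilon$, where the nontrivial zeros $\rho$ of $\zeta(2+s)$ (located at $s=\rho-2$) are crossed, and bound both the shifted integral and the sum of residues using the rapid decay of $\widetilde{\mathcal J}$ together with standard bounds for $\zeta$. Each residue is of size $X^{\Re(\rho)/2-1}$, which is $O(X^{-3/4})$ on the Riemann Hypothesis and $O(X^{-1/2+\epsilon})$ by unconditional zero-free-region estimates, so the critical-strip input is exactly what pins down the exponent $3/4$. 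Uniformity in $X$ of the implied constant is automatic, since all $X$-dependence is carried by the explicit factor $X^{s/2}$.
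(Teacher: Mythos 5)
Your derivation of the main term is, up to a change of variable, identical to the paper's own proof: the paper sets $F(x)=\frac1x\mathcal J(x)$, writes the left-hand side of \eqref{eq:ia} as $\frac{1}{\sqrt X}\sum_{c_1}\frac{f_{c_1}(l-l')}{c_1}F(c_1/\sqrt X)$, applies Mellin inversion against $L(s)=\sum_{c}f_c(l-l')c^{-s-1}$ (your $D(s)$ with $s$ shifted by one), uses $f_n(m)=\sum_{r\mid(m,n)}\mu(n/r)r$ to get $L(s)=\zeta(s)/\zeta(s+1)$ when $l=l'$ and $\frac{1}{\zeta(s+1)}\sum_{r\mid(l-l')}r^{-s}$ otherwise, and extracts the pole (present only for $l=l'$) with residue $\frac{6}{\pi^2}\widetilde F(1)=\frac{6}{\pi^2}\int_0^\infty V(y)W(y)\frac{dy}{y}$ via the same substitution and $\int_0^\infty g(t)dt=1$. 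So the main term, including the mechanism producing $\delta_{l,l'}$, matches the paper exactly.

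The divergence is in the error term, and there your analysis is sharper than the paper's. The paper shifts only to $\Re(s)=3/4$ in its normalization, i.e.\ $\Re(s)=-1/4$ in yours; since the saving from a contour at abscissa $\sigma$ is $X^{(\sigma-1)/2}$ in the paper's coordinates (your $X^{\sigma/2}$), that shift yields $O(X^{-1/8})$, not the claimed $O(X^{-3/4})$ --- the stated exponent is not justified by the paper's own contour, and, as you correctly diagnose, it cannot be reached unconditionally: a genuine $X^{-3/4}$ forces the contour to $\Re(s)=-3/2$, where $1/\zeta(2+s)$ sits on the critical line. Two caveats on your conditional fix, though. Even under RH you cannot place the contour on $\Re(s)=-3/2$ itself (the poles lie exactly there), so you only get $X^{-3/4+\epsilon}$; and the sketched sum of residues at $s=\rho-2$ is not rigorous as stated --- it presumes simple zeros, needs control of $1/\zeta'(\rho)$, and the convergence of a sum over all nontrivial zeros is delicate --- so the cleaner conditional argument shifts to $\Re(s)=-3/2+\epsilon$ without crossing any zeros. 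Finally, note that nothing downstream needs the exponent $3/4$: equation \eqref{a0c} and the lemma interchanging the $n$-sum and the limit only require $A_{0,X}$ to converge with some power saving $O(X^{-\delta})$, which an unconditional shift to $\Re(s)=-1$ (legitimate since $\zeta$ has no zeros on the $1$-line and $1/\zeta(1+it)$ grows at most polylogarithmically) already provides, giving essentially your $O(X^{-1/2+\epsilon})$. So the correct repair is to weaken the stated error term, not to invoke RH.
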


\begin{proof}
 We define
\begin{equation} F(x):=\frac{1}{x}\int_{-\infty}^{\infty}
g(t)V(\frac{4\pi \sqrt{lt}}{x})W(\frac{4\pi
\sqrt{l't}}{x})dt.\end{equation} Denoting again the Mellin transform
of $F(x)$ as $ \widetilde{F}(s)$, and using the estimate
\eqref{eq:Dec}, we use Mellin inversion to write \eqref{eq:ia} as
\begin{equation}\label{eq:ja}
 \frac{1}{\sqrt{X}} \sum_{c_1}
\frac{f_{c_1}(l-l')}{c_1} F(\frac{c_1}{\sqrt{X}})=
  \frac{1}{\sqrt{X}}  \left\{ \frac{1}{2\pi
i}\int_{\sigma-i\infty}^{\sigma+i\infty}
\widetilde{F}(s)L(s)(\sqrt{X})^{s} ds \right\} ,\end{equation}
where 
\begin{equation}\label{eq:ramm} L(s)=\sum_{c=1}^\infty
\frac{f_{c}(l-l')}{c^{s+1}},\end{equation} and $\sigma$ is sufficiently large to ensure convergence of the
integral.

Now using the fact that $$f_n(m)=\sum_{r|(m,n)}\mu(\frac{n}{r})r,$$ we rewrite \eqref{eq:ramm} as 
\begin{equation}\label{eq:romm}
L(s)=\frac{1}{\zeta(1+s)}\sum_{r|(l-l')} \frac{1}{r^s}.\end{equation} This is certainly analytic for $\Re(s)>0.$

If $l=l',$ then
$$L(s)=\frac{\zeta{(s)}}{\zeta{(s+1)}}.$$ Shifting contour of the integral to 
$\sigma=3/4,$  $L(s)$ has a simple pole at only $s=1,$ only if $l=l'$ with residue
$\frac{6}{\pi^2}.$ After the shift, \eqref{eq:ja} equals
\begin{equation}   \frac{6\delta_{l,l'}}{\pi^2}
\widetilde{F}(1)+ O(\frac{1}{X^{3/4}}).
\end{equation}
 With a change of variables $y\rightarrow
\frac{4\pi \sqrt{tl}}{ y},$ we get \begin{equation} \frac{6\delta_{l,l'}}{\pi^2}
\int_0^\infty \int_0^\infty g(t)V(y)W(y)dt\frac{dy}{y}.
\end{equation} Using the fact $\int_0^\infty g(t)dt=1,$ we are
left with \begin{equation} \frac{6\delta_{l,l'}}{\pi^2} \int_0^\infty
V(y)W(y)\frac{dy}{y}+ O(\frac{1}{X^{3/4}}).
\end{equation}
\end{proof}

We now show the $n$-sum and limit can be interchanged.

\begin{lemma}
$\lim_{X \to \infty} \sum_{n \in \Z} A_{n,X}=\sum_{n \in \Z}
\lim_{X \to \infty} A_{n,X}.$
\end{lemma}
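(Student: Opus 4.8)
The plan is to justify the interchange by a dominated-convergence (Tannery) argument, splitting the series into a finite head and an infinite tail. For each fixed $n$ the limit $\lim_{X\to\infty}A_{n,X}$ already exists: when $n=0$ this is Proposition \ref{RS_0}, and when $n\neq 0$ the quantity $A_{n,X}$ is a \emph{finite} combination (over the divisors $d\mid n$ and the residues $r\in(\Z/n)^*$) of the sums treated in Proposition \ref{Calc}, so passing to the limit termwise gives $A_n:=\lim_{X\to\infty}A_{n,X}=\frac{6}{\pi^2}\frac{S(l,l',n)}{n}\int_0^\infty\int_0^\infty F_n(x,y)\,dx\,dy$. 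Writing $\sum_{n}A_{n,X}=\sum_{|n|\le N}A_{n,X}+\sum_{|n|>N}A_{n,X}$, the head is a finite sum, for which $\lim_{X\to\infty}\sum_{|n|\le N}A_{n,X}=\sum_{|n|\le N}A_n$ is immediate. Everything therefore reduces to producing a bound $|A_{n,X}|\le M_n$ valid for all $X\ge 1$ with $\sum_n M_n<\infty$; then the tail is uniformly small in $X$ and the interchange follows.

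To build such a bound I would estimate the limit term and the transient separately. For the limit term, the trivial Kloosterman bound $|S(l,l',n)|\le n$ together with the rapid decay $\int_0^\infty\int_0^\infty F_n(x,y)\,dx\,dy=O(n^{-k})$ coming from \eqref{eq:Zerr} (integration by parts $k$ times, legitimate since $V,W,g$ are smooth of compact support) gives $|A_n|\ll n^{-k}$, summable once $k\ge 2$. For the transient $E_{n,X}:=A_{n,X}-A_n$, the key observation is that the error in Proposition \ref{Calc} is not tied to the exponent $2$: integrating by parts more often in both \eqref{eq:Dec} (taking $M$ large) and \eqref{eq:Zerr} (taking $k$ large) upgrades the per-$(r,d)$ estimate to $O\!\left(n^{-K}X^{-(1-\sigma_0)/2}\right)$ for any fixed $K$, with implied constant independent of $n$ and $X$. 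Since $A_{n,X}$ is assembled from these pieces over the $\phi(n)$ residues $r$ and the $\tau(n)$ divisors $d\mid n$, and the phases $e((rl+\overline{r}l')/n)$ have modulus one, this yields $|E_{n,X}|\ll \phi(n)\tau(n)\,n^{-K}X^{-(1-\sigma_0)/2}$, hence $\ll\phi(n)\tau(n)\,n^{-K}$ for every $X\ge 1$.

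The main obstacle is exactly this accumulation of the error over residues and divisors, which costs a factor $\phi(n)\tau(n)\ll n^{1+\epsilon}$ and would defeat the fixed exponent supplied by Proposition \ref{Calc} as literally stated. It is overcome by the freedom to enlarge $K$: choosing $K\ge 3$ gives $|E_{n,X}|\ll n^{1+\epsilon-K}\ll n^{-3/2}$ uniformly in $X\ge 1$. Combining the two estimates, $|A_{n,X}|\le |A_n|+|E_{n,X}|\le M_n$ with $M_n\ll n^{-2}+n^{-3/2}$ and $\sum_{n}M_n<\infty$. Consequently $\sum_{|n|>N}|A_{n,X}|\le\sum_{|n|>N}M_n\to 0$ as $N\to\infty$ uniformly in $X$, and dominated convergence for series yields $\lim_{X\to\infty}\sum_n A_{n,X}=\sum_n\lim_{X\to\infty}A_{n,X}$, as claimed.
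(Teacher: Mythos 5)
Your argument is correct and rests on the same mechanism as the paper's own proof: uniform-in-$X$ (for $X\ge 1$) control of the tail of $\sum_n A_{n,X}$ supplied by the error term of Proposition \ref{Calc}, followed by the standard interchange theorem for uniformly convergent series (Tannery/Moore--Osgood). But your execution differs in one substantive and valuable way. The paper's proof quotes the $O\bigl(n^{-2}X^{-(1-\sigma_0)/2}\bigr)$ error of Proposition \ref{Calc} as if it bounded $A_{n,X}$ itself and deduces a uniform Cauchy property of the partial sums from summability of $n^{-2}$; as you observe, that is not literally what the proposition gives, since $A_{n,X}$ is assembled from roughly $\phi(n)\tau(n)$ instances of that estimate (one per residue $r\in(\Z/n)^*$ and divisor $d\mid n$), so the bound obtained by naive summation is only $O\bigl(n^{-1+\epsilon}X^{-(1-\sigma_0)/2}\bigr)$, which is not summable in $n$. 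Your repair --- rerunning the proof of Proposition \ref{Calc} with more integrations by parts so that the per-$(r,d)$ error becomes $O\bigl(n^{-K}X^{-(1-\sigma_0)/2}\bigr)$ for any fixed $K$ --- is exactly what that proof's machinery permits, and it closes a gap the paper leaves silent; you also make explicit the summability of the limit terms $|A_n|\ll n^{-k}$ (trivial Kloosterman bound plus \eqref{eq:Zerr}), which the paper's Cauchy argument needs but never states. One precision: the bound \eqref{eq:Dec} grows like $n^M$, so enlarging $M$ by itself does not improve the $n$-decay; what is needed is $k$ large relative to $M$ in \eqref{eq:Zerr} (e.g.\ $M=2$, $k=K+2$), which is precisely how the paper combines the two estimates in obtaining \eqref{eq:Dac}. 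With that reading, your dominating sequence $M_n\ll n^{-3/2}$ is legitimate, and the interchange follows; in this respect your proof is more complete than the one in the paper.
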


 \begin{proof}
We show $A_{n,X}$ is uniformly convergent in $X.$ Fix any
$\epsilon
>0,$ by Proposition \ref{Calc}, we have $$\Bigl|A_{n,X}-A_{m,X}\Bigl| =
\Bigl|\frac{C}{X^{(1-\sigma_0)/2}} \sum_{m}^n \frac{1}{n^2}\Bigl|
\leq \Bigl|\frac{C}{X^{(1-\sigma_0)/2}} \int_{m}^n \frac
{dx}{(x+1)^2}\Bigl|,$$ where $C$ is a fixed constant independent
of $n$ and $X$ and $1/2 < \sigma_0 <1.$ Suppose $n \geq m \neq 0$
then,
$$\Bigl|\frac{C}{X^{(1-\sigma_0)/2}} \int_{m}^n \frac {dx}{(x+1)^2}\Bigl| \leq
\Bigl|\frac{C}{X^{(1-\sigma_0)/2}}\frac{2}{m+1}\Bigl|.$$ Since we
only take $X$ in the range $[1,\infty),$ and $(1-\sigma_0)/2 > 0,$
we have uniform convergence in $X$ by taking $n,m \geq
M(\epsilon),$ such that $M(\epsilon):=\frac{2C}{\epsilon} -1.$
Thus the sum and limit can be interchanged.

\end{proof}

\section{Analysis of $\int_0^\infty \int_0^\infty F_n (x,y)dx
dy$}

We extend the integrals from $(-\infty,\infty)$ so we can write
this as
\begin{equation} \int_{-\infty}^{\infty} \int_{-\infty}^{\infty}
  e\left(\frac{1}{n}(
 \frac{l'x}{y}+\frac{ly}{x})\right) \times
\end{equation}

$$ \times \left\{
 \int_{-\infty}^{\infty} e\left( \frac{tn}{x  y} \right)g(t)
 V(\frac{4\pi
\sqrt{lt}}{x})  W(\frac{4\pi \sqrt{l't}}{ y})dt
\right\}\frac{dx}{x} \frac{dy}{y} .$$

After a change in variables $x\rightarrow x\sqrt{tl}, y\rightarrow
y\sqrt{tl'},$ we get \begin{equation}\label{eq:bes1}
\int_{-\infty}^{\infty} \int_{-\infty}^{\infty}
F_{n,l,l'}(x,y)dxdy= \int_{-\infty}^{\infty}
\int_{-\infty}^{\infty}
  e\left(\frac{\sqrt{ll'}}{n}(
 \frac{x}{y}+\frac{y}{x})\right) e\left( \frac{n}{\sqrt{ll'}x  y}\right)\times\end{equation} $$V(\frac{4\pi
}{x})  W(\frac{4\pi}{ y})\frac{dx}{x} \frac{dy}{y} \left\{
 \int_{-\infty}^{\infty} g(t) dt \right\}
.$$

Let \begin{equation} F(z):= \int_{-\infty}^{\infty}
\int_{-\infty}^{\infty}
  \exp \left(z\frac{i}{2}(
 \frac{x}{y}+\frac{y}{x})\right) \exp \left( (\frac{1}{z})\frac{8\pi^2i}{xy}\right) V(\frac{4\pi
}{x})  W(\frac{4\pi}{ y})\frac{dx}{x} \frac{dy}{y}
\end{equation} and $G(z):=F(z)+F(-z).$ So \eqref{eq:bes1} equals
$F(\frac{4\pi \sqrt{ll'}}{n}).$ We include $F(-z)$ for in \eqref{eq:poisson} the sum is over the integers. The analysis in the previous sections is identical for $n$ or $-n,$ but this integral must be accounted for in the final calculation.   

\subsection{Computation for $J$-Bessel function.}

Remembering that the J-Bessel
transform is
\begin{equation} h(V,k)=i^k \int_0^{\infty} V(x)
J_{k-1}(x)\frac{dx}{x}.
\end{equation} 

\begin{prop}\label{besseltrans}
Let $k$ be an even integer, then \begin{equation}h(G,k) = 2\pi h(V,k)
\cdot h(W,k)\end{equation}
\end{prop}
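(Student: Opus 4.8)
The plan is to collapse the double integral defining $G$ against $J_{k-1}$ into a single, explicitly evaluable Bessel integral, which I would then compute through Graf's addition theorem. First I would clear the arguments $4\pi/x,\,4\pi/y$ by the substitution $a=4\pi/x$, $b=4\pi/y$; since $V,W$ are supported on $\R^{+}$ this turns $F(z)$ into
\[
F(z)=\int_0^\infty\!\!\int_0^\infty \exp\!\Big(\tfrac{iz}{2}\big(\tfrac{a}{b}+\tfrac{b}{a}\big)\Big)\exp\!\Big(\tfrac{i\,ab}{2z}\Big)V(a)W(b)\,\frac{da}{a}\frac{db}{b}.
\]
Writing $\alpha=\tfrac{a^2+b^2}{2ab}$ and $\beta=\tfrac{ab}{2}$, the function $G(z)=F(z)+F(-z)$ contributes the factor $2\cos(\alpha z+\beta/z)$ for each fixed $(a,b)$. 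Inserting this into $h(G,k)=i^{k}\int_0^\infty G(z)J_{k-1}(z)\,dz/z$ and moving the $z$-integral inside the $(a,b)$-integrals reduces the whole statement to the pointwise Bessel identity
\[
\int_0^\infty 2\cos\!\big(\alpha z+\tfrac{\beta}{z}\big)J_{k-1}(z)\,\frac{dz}{z}=2\pi(-1)^{k/2}J_{k-1}(a)J_{k-1}(b),
\]
after which the prefactor identity $i^{k}(-1)^{k/2}=1$ for even $k$, together with the definition of $h(\cdot,k)$ and $i^{2k}=1$, gives $h(G,k)=2\pi\,h(V,k)h(W,k)$ at once.

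To prove the pointwise identity I would split the cosine and treat $I_{\pm}=\int_0^\infty e^{\pm i(\alpha z+\beta/z)}J_{k-1}(z)\,dz/z$, noting $I_-=\overline{I_+}$. For $I_+$ I insert the Schl\"afli representation $J_{k-1}(z)=\frac{1}{2\pi}\int_{-\pi}^{\pi}e^{i(z\sin\phi-(k-1)\phi)}\,d\phi$ and carry out the inner $z$-integral using the elementary evaluation
\[
\int_0^\infty e^{i(Pz+\beta/z)}\,\frac{dz}{z}=i\pi\,H_0^{(1)}\!\big(2\sqrt{P\beta}\big),\qquad P=\alpha+\sin\phi>0,
\]
which follows from the substitutions $z=\sqrt{\beta/P}\,u$, $u=e^{t}$, and the representation $\int_0^\infty e^{ix\cosh t}\,dt=\tfrac{i\pi}{2}H_0^{(1)}(x)$. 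The key algebraic observation is that $2\sqrt{(\alpha+\sin\phi)\beta}=\sqrt{a^2+b^2+2ab\sin\phi}$, which is exactly the third side of a triangle with sides $a,b$ and enclosed angle $\phi+\tfrac\pi2$.

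I would then apply Graf's addition theorem, $H_0^{(1)}\big(\sqrt{a^2+b^2+2ab\sin\phi}\big)=\sum_{m}H_m^{(1)}(a)J_m(b)e^{im(\phi+\pi/2)}$ (valid for $b<a$; the region $a<b$ is handled by exchanging the roles of $a$ and $b$, and $a=b$ by continuity), interchange the series with the $\phi$-integral, and invoke orthogonality $\int_{-\pi}^{\pi}e^{i(m-(k-1))\phi}\,d\phi=2\pi\delta_{m,k-1}$ to collapse the sum to its single surviving term. This yields $I_+=i^{k}\pi H_{k-1}^{(1)}(a)J_{k-1}(b)$, hence $I_-=\overline{I_+}=i^{-k}\pi H_{k-1}^{(2)}(a)J_{k-1}(b)$. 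Adding them and using $H^{(1)}+H^{(2)}=2J$ with $i^{k}=i^{-k}=(-1)^{k/2}$ for even $k$, the second-kind pieces cancel and $I_++I_-=2\pi(-1)^{k/2}J_{k-1}(a)J_{k-1}(b)$, which is the pointwise identity.

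The substitutions and the powers-of-$i$ bookkeeping are routine; the two places demanding genuine care, and what I expect to be the main obstacle, are the analytic justifications of the two interchanges. The first is the Fubini step moving the $z$-integration inside the $(a,b)$-integration: because $V,W$ have compact support bounded away from $0$ and $\infty$, the parameters $\alpha,\beta$ stay in a fixed compact range, and the $z$-integral converges (absolutely near $z=0$ since $J_{k-1}(z)/z=O(z^{k-2})$ with $k\ge2$, and conditionally at $\infty$ through the oscillation of $J_{k-1}$), so a uniform estimate legitimizes the exchange. The second, more delicate, is interchanging the Graf series with the $\phi$-integral while respecting that the expansion converges only on $b<a$ (respectively $a<b$); I would justify this using the uniform convergence of Graf's expansion on compact subsets away from the diagonal $a=b$, and then remove the diagonal by continuity of both sides of the pointwise identity.
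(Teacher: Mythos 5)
Your proposal is correct and proves exactly the pointwise identity the paper needs, but by a genuinely different route. The paper's proof makes the change of variables $w \to xy/(-iw)$ in the inner $w$-integral, converting $J_{k-1}$ into $I_{k-1}$ and moving the contour onto the imaginary axis; after adding the $F(z)$ and $F(-z)$ contributions it evaluates the resulting contour integral in one stroke by citing Watson's product formula \eqref{eq:Wat}, obtaining $2\pi J_{k-1}(x)J_{k-1}(y)$. You instead keep everything on real contours, combine $F(z)+F(-z)$ into a cosine, and re-derive the needed product formula from classical ingredients: Schl\"afli's representation of $J_{k-1}$, the Mehler--Sonine evaluation $\int_0^\infty e^{i(Pz+\beta/z)}\,dz/z = i\pi H_0^{(1)}(2\sqrt{P\beta})$, Graf's addition theorem, and orthogonality of $e^{im\phi}$. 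Your intermediate formulas $I_+ = i^{k}\pi H^{(1)}_{k-1}(a)J_{k-1}(b)$ and $I_- = i^{-k}\pi H^{(2)}_{k-1}(a)J_{k-1}(b)$ check out (note they are only valid on $b<a$, which is why their apparent asymmetry in $a,b$ does not contradict the symmetry of $I_\pm$ themselves), and the cancellation of the Hankel pieces in $I_++I_-$ is exactly right; incidentally, this recombination of $H^{(1)}$ and $H^{(2)}$ closely mirrors the paper's own proof of the companion Proposition \ref{Btrans} for the $B$-Bessel case, which routes through \eqref{eq:Wat2}--\eqref{eq:Wat3}. As for what each approach buys: the paper's is shorter and symmetric in the two arguments from the start, with no case split and no diagonal issue, at the cost of quoting a less elementary formula and performing a formal contour rotation; yours is more self-contained, but pays for it with the region restriction $b<a$ in Graf's theorem and the attendant patching.

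One caveat: you flag two interchanges, but there is a third you do not mention --- swapping the $z$-integral with the $\phi$-integral after inserting Schl\"afli's representation. That double integral is not absolutely convergent (the integrand has modulus $\tfrac{1}{2\pi z}$, whose integral diverges at both endpoints), so Fubini does not apply directly. It can be repaired by a standard regularization: insert a factor $e^{-\epsilon(z+1/z)}$, apply Fubini at fixed $\epsilon>0$, and pass to the limit using that $P=\alpha+\sin\phi \ge \alpha-1>0$ is bounded away from zero when $a\ne b$ (the diagonal $a=b$ being negligible for the outer integration against $V(a)W(b)$). Since this is the same kind of argument you sketch for the interchanges you do flag, and since the paper's own proof is no more rigorous at the corresponding step (its contour rotation is likewise formal), I regard this as a repairable omission rather than a defect of the approach.
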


\begin{proof}
It is sufficient to study this for $F(z).$ We note first \begin{equation} h(F,k)= i^{k}
\int_0^\infty F(w) J_{k-1}(w)\frac{dw}{w}= \int_{-\infty}^{\infty}
\int_{-\infty}^{\infty}V(\frac{4\pi }{x}) W(\frac{4\pi}{ y})
\times
\end{equation}
$$ \times \Bigg(i^{k}\int_0^{\infty}
  \exp \left(w\frac{i}{2}(
 \frac{x}{y}+\frac{y}{x})\right) \exp\left( (\frac{1}{w})\frac{8\pi^2i}{xy}\right)J_
 {k-1}(w)\frac{dw}{w}\Bigg) \frac{dx}{x}
 \frac{dy}{y}.$$
Now make a change of variables $x\rightarrow
\frac{4\pi}{x},y\rightarrow \frac{4\pi}{y},$ to get $$
h(F,k)=
\int_{-\infty}^{\infty}\int_{-\infty}^{\infty}V(x) W(y) \times
$$
\begin{equation}\label{bes2} \times
\Bigg(i^{k}\int_0^{\infty}
  \exp \left(w\frac{i}{2}(
 \frac{x}{y}+\frac{y}{x})\right) \exp \left( \frac{ixy}{2w}\right)J_
 {k-1}(w)\frac{dw}{w}\Bigg) \frac{dx}{x}
 \frac{dy}{y}.\end{equation}

Notice the test functions $V$ and $W$ are chosen to be supported on the positive real numbers. We study the integral in the $w$ variable in (\ref{bes2}). First
we make a change of variables $w\rightarrow \frac{xy}{-i w},$
yielding

\begin{equation}i^{k}\int_0^{-i\infty}
  \exp \left(-(
 \frac{x^2+y^2}{2w})\right) \exp \left( \frac{w}{2}\right)J_
 {k-1}(\frac{-iyx}{w})\frac{dw}{w}. \end{equation}

 The J-Bessel function transforms by $J_{k-1}(ix)=i^{k-1}I_{k-1}(x)$ and
 $J_{k-1}(-x)=-J_{k-1}(x),$ since k-1 is odd. Thus

\begin{equation}\label{eq:bes4}\frac{1}{i}\int_{-i\infty}^0
  \exp \left( \frac{w}{2}-(
 \frac{x^2+y^2}{2w}) \right)I_
 {k-1}(\frac{xy}{w})\frac{dw}{w}. \end{equation}

Now doing the same analysis for $F(-z),$ we obtain

\begin{equation}\label{eq:bes5}\frac{1}{i}\int_0^{i\infty}
  \exp\left( \frac{w}{2}-(
 \frac{x^2+y^2}{2w}) \right)I_
 {k-1}(\frac{xy}{w})\frac{dw}{w}. \end{equation}

 Adding \eqref{eq:bes4} and \eqref{eq:bes5} we get
\begin{equation}\label{eq:bes6} \frac{1}{i}\int_{-i\infty}^{i\infty}
  \exp\left( \frac{w}{2}-(
 \frac{x^2+y^2}{2w}) \right)I_
 {k-1}(\frac{xy}{w})\frac{dw}{w}. \end{equation}

We now state a formula from \cite{Wat},

\begin{equation}\label{eq:Wat}
J_{\nu}(Z)J_{\nu}(y) = \frac{1}{2\pi i} \int_{-i \infty}^{i
\infty} \exp \left( t/2 -
(\frac{Z^2+y^2}{2t})\right)I_{\nu}(\frac{yZ}{t})\frac{dt}{t}.
\end{equation}

 Using \eqref{eq:Wat}, \eqref{eq:bes6} equals

\begin{equation} 2 \pi J_{k-1}(x) J_{k-1}(y). \end{equation}

Incorporating \eqref{eq:bes6} into $h(G,k)$ we get
\begin{equation} \int_{-\infty}^{\infty} \int_{-\infty}^{\infty}
V(x)W(y)  \left( 2 \pi J_{k-1}(x) J_{k-1}(y)\right)
\frac{dx}{x}\frac{dy}{y}=
\end{equation}
$$=2\pi i^k\int_{-\infty}^{\infty} V(x)J_{k-1}(x)\frac{dx}{x}
i^k\int_{-\infty}^{\infty}W(y)J_{k-1}(y)\frac{dy}{y}=2\pi
h(V,k)\cdot h(W,k).
$$

\end{proof}

We now must show $F(z)$ is also the convolution for the $B$-Bessel function.

\subsection{Computation for $B$-Bessel function.}

Again, $B_{2it}(x):=(2\sin(\pi i t))^{-1}(J_{-2it}(x)-J_{2it}(x)),$ and $h(V,t):=\int_0^\infty V(x)B_{2it}(x)x^{-1}dx, t \in \mathbb{R}, V \in C_0^\infty(\mathbb{R}).$ 
    \begin{prop}\label{Btrans}  Let $G(z):= F(z) + F(-z),$ and $t$ purely imaginary, then $h(G,t)= \pi h(V,t)h(W,t).$ \end{prop}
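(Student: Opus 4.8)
The plan is to mirror the $J$-Bessel computation in Proposition~\ref{besseltrans}, but with the $B$-Bessel function replacing $J_{k-1}$ and with the index now running over a purely imaginary parameter. Writing $t = i\mu$ (so $2it = -2\mu$ is real), I would begin exactly as before: by linearity in the definition of $B_{2it}$, it suffices to understand
\[
h(F,t) = \int_0^\infty F(w)\,B_{2it}(w)\,\frac{dw}{w}
\]
and its $F(-z)$ analogue. Expanding $F(w)$ via its definition and making the same change of variables $x\to 4\pi/x$, $y\to 4\pi/y$ used to pass to \eqref{bes2}, the whole problem reduces to evaluating the inner $w$-integral
\[
\int_0^\infty \exp\!\left(w\tfrac{i}{2}\bigl(\tfrac{x}{y}+\tfrac{y}{x}\bigr)\right)\exp\!\left(\tfrac{ixy}{2w}\right) B_{2it}(w)\,\frac{dw}{w},
\]
against the test functions $V(x)W(y)$.

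Next I would carry out the contour-rotation step. Writing $B_{2it} = (2\sin(\pi i t))^{-1}(J_{-2it}-J_{2it})$, I can treat each $J$-Bessel piece separately using the change of variables $w\to xy/(-iw)$ exactly as in the derivation of \eqref{eq:bes4}. The key difference from the holomorphic case is the transformation law of the $J$-Bessel function under $w\mapsto iw$: here the index $\pm 2it$ is no longer an odd integer, so instead of the clean relations $J_{k-1}(ix)=i^{k-1}I_{k-1}(x)$ and $J_{k-1}(-x)=-J_{k-1}(x)$, I would use $J_\nu(ix)=e^{i\pi\nu/2}I_\nu(x)$ together with care about the branch of $w^\nu$ along the rotated contour. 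Adding the $F(z)$ and $F(-z)$ contributions should, as in \eqref{eq:bes6}, reassemble the two half-line contours into a single contour from $-i\infty$ to $i\infty$, at which point I invoke Watson's formula \eqref{eq:Wat} with $\nu=\mp 2it$ to collapse each inner integral into a product $J_{\mp 2it}(x)J_{\mp 2it}(y)$ (up to the $2\pi i$ factor).

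Assembling the two terms of $B_{2it}$ then gives
\[
\frac{1}{2\sin(\pi i t)}\Bigl(J_{-2it}(x)J_{-2it}(y)-J_{2it}(x)J_{2it}(y)\Bigr),
\]
and the main obstacle is that this is \emph{not} simply proportional to $B_{2it}(x)B_{2it}(y)$: the product $B_{2it}(x)B_{2it}(y)$ expands into four $J$-$J$ cross terms, whereas Watson's formula only produces the two ``diagonal'' terms $J_{-2it}J_{-2it}$ and $J_{2it}J_{2it}$. I expect this mismatch to be the crux of the proof. Resolving it should rely on the fact that the final answer is integrated against $V(x)W(y)$ and that $V,W$ are real-valued of compact support, so the cross terms either cancel by a symmetry $t\mapsto -t$ (under which the integrals $h(V,t)$ are invariant since $B_{2it}=B_{-2it}$) or combine to reproduce the missing terms; tracking the factor $\sin(\pi i t)$ carefully and comparing with the definition $h(V,t)=\int_0^\infty V B_{2it}\,x^{-1}dx$ should yield the constant $C_t=\pi$ rather than the $2\pi$ of the even-integer case. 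I would finish by verifying that the bookkeeping of the $(2\sin(\pi i t))^{-2}$ normalization from the two $B$-factors, against the single $(2\sin(\pi it))^{-1}$ and the $2\pi$ from Watson, produces exactly $\pi\, h(V,t)h(W,t)$.
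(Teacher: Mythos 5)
Your outline follows the paper's strategy up to the contour rotation, but the pivotal step --- adding the $F(z)$ and $F(-z)$ pieces to ``reassemble the two half-line contours into a single contour from $-i\infty$ to $i\infty$'' and then invoking the full-contour Watson formula \eqref{eq:Wat} --- is exactly the step that fails for imaginary order, and it is the one place where this case genuinely differs from Proposition \ref{besseltrans}. In the even-weight case the gluing works because $i^{k}J_{k-1}(\pm ix)$ produces the \emph{same} integrand on both half-lines (this is what lets \eqref{eq:bes4} and \eqref{eq:bes5} combine into \eqref{eq:bes6}). For index $\mp 2it$ the two rotations $w\mapsto \mp iw$ produce \emph{unequal} prefactors: by the power series, $J_{-2it}(-ix)=e^{-\pi t}I_{-2it}(x)$ while $J_{-2it}(+ix)=e^{+\pi t}I_{-2it}(x)$. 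The two half-line integrals are therefore integrals of different functions and cannot be concatenated, so \eqref{eq:Wat} does not apply. Symptomatically, the expression you arrive at,
\begin{equation*}
\frac{1}{2\sin(\pi i t)}\Bigl(J_{-2it}(x)J_{-2it}(y)-J_{2it}(x)J_{2it}(y)\Bigr),
\end{equation*}
cannot be repaired by the symmetry $t\mapsto -t$ or by reality of $V,W$: expanding $\pi B_{2it}(x)B_{2it}(y)$ gives all four products $J_{\pm 2it}(x)J_{\pm 2it}(y)$, with the two diagonal terms carrying the \emph{same} sign and a $(2\sin(\pi i t))^{-2}$ normalization, whereas your expression has opposite signs on the diagonal, no cross terms, and a single power of $(2\sin(\pi i t))^{-1}$. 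Since the identity must hold against all compactly supported $V,W$, the kernels themselves would have to agree, and they do not; the deficit is structural, not a constant to be tracked.

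The paper's resolution is to keep the two half-contours separate and use Watson's half-contour formulas (Chapter 13.7), \eqref{eq:Wat2} and \eqref{eq:Wat3}, which evaluate integrals over $(0,\,c\pm i\infty)$ as Hankel--Bessel products $H^{(1)}_{\nu}(Z)J_{\nu}(y)$ and $H^{(2)}_{\nu}(Z)J_{\nu}(y)$. Each of the four pieces ($J_{\mp 2it}$ coming from $F(z)$ and from $F(-z)$) then becomes such a product weighted by its own $e^{\pm\pi t}$, and expanding the Hankel functions via \eqref{han1}--\eqref{han2} is precisely what generates the missing cross terms $J_{\mp 2it}(x)J_{\pm 2it}(y)$; the trigonometric bookkeeping $\sin(2\pi i t)=2\sin(\pi i t)\cos(\pi i t)$ then collapses the four terms to $\pi B_{2it}(x)B_{2it}(y)$, as in \eqref{eq:besfinal}. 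If you replace your gluing step by these two formulas, the remainder of your outline (integrating against $V(x)W(y)$ and reading off $C_t=\pi$) goes through.
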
    
 \begin{proof}
The goal is study $F(z),$ similar calculations can be done for $F(-z).$
 We note first \begin{equation} h(F,t)= 
\int_0^\infty F(w) B_{2it}(w)\frac{dw}{w}= \int_{-\infty}^{\infty}
\int_{-\infty}^{\infty}V(\frac{4\pi }{x}) W(\frac{4\pi}{ y})
\times
\end{equation}
$$ \times \Bigg(\int_0^{\infty}
  \exp \left(w\frac{i}{2}(
 \frac{x}{y}+\frac{y}{x})\right) \exp\left( (\frac{1}{w})\frac{8\pi^2i}{xy}\right)B_
 {2it}(w)\frac{dw}{w}\Bigg) \frac{dx}{x}
 \frac{dy}{y}.$$

 Now make a change of variables $x\rightarrow
\frac{4\pi}{x},y\rightarrow \frac{4\pi}{y},$ to get $$
h(F,t)=
\int_{-\infty}^{\infty}\int_{-\infty}^{\infty}V(x) W(y) \times
$$
\begin{equation}\label{eq:bes22} \times
\Bigg(\int_0^{\infty}
  \exp \left(w\frac{i}{2}(
 \frac{x}{y}+\frac{y}{x})\right) \exp \left( \frac{ixy}{2w}\right)B_
 {2it}(w)\frac{dw}{w}\Bigg) \frac{dx}{x}
 \frac{dy}{y}.\end{equation}
 
 Using that the $B$-Bessel function is a difference of imaginary order $J$-Bessel functions it is sufficient to focus on 
 
 $$\frac{1}{2\sin(\pi i t)}\int_{-\infty}^{\infty}\int_{-\infty}^{\infty}V(x) W(y) \times
$$
\begin{equation}\label{eq:bes22} \times
\Bigg(\int_0^{\infty}
  \exp \left(w\frac{i}{2}(
 \frac{x}{y}+\frac{y}{x})\right) \exp \left( \frac{ixy}{2w}\right)J_
 {-2it}(w)\frac{dw}{w}\Bigg) \frac{dx}{x}
 \frac{dy}{y}.\end{equation}
The integral in $J_{2it}$ will be a similar calculation. We study the integral in the $w$ variable in (\\eqref{eq:bes22}. First
 a change of variables $w\rightarrow \frac{xy}{-i w}$ is made,
yielding

\begin{equation}\label{eq:bess}T_{F}^{-}(x,y):=\int_0^{-i\infty}
  \exp \left(-(
 \frac{x^2+y^2}{2w})\right) \exp \left( \frac{w}{2}\right)J_
 {-2it}(\frac{-iyx}{w})\frac{dw}{w}. \end{equation}

 The J-Bessel function of imaginary order transforms by $J_{-2it}(\pm ix)=e^{\pm \pi t} I_{-2it}(x)$ and $J_{2it}(\pm ix)=e^{\mp \pi t} I_{2it}(x)$  by inspection of the power series.  Thus $T_{F}^{-}(x,y)$ equals

\begin{equation}\label{eq:bes44}e^{-\pi t}\int_0^{-i\infty}
  \exp \left( \frac{w}{2}-(
 \frac{x^2+y^2}{2w}) \right)I_
 {-2it}(\frac{xy}{w})\frac{dw}{w}. \end{equation}

  From \cite{Wat}(chap 13.7), we now borrow two formulas

\begin{equation}\label{eq:Wat2}
H_{\nu}^{(1)}(Z)J_{\nu}(y) = \frac{1}{\pi i} \int_{0}^{c+i
\infty} \exp \left( t/2 -
(\frac{Z^2+y^2}{2t})\right)I_{\nu}(\frac{yZ}{t})\frac{dt}{t},
\end{equation}

\begin{equation}\label{eq:Wat3}
H_{\nu}^{(2)}(Z)J_{\nu}(y) = \frac{-1}{\pi i} \int_{0}^{c-i
\infty} \exp \left( t/2 -
(\frac{Z^2+y^2}{2t})\right)I_{\nu}(\frac{yZ}{t})\frac{dt}{t}.
\end{equation}
 
 Here $H^{(i)}$ is the $i$-th order Hankel function.
 Therefore, $T_{F}^{-}(x,y)$ equals $ -\pi i e^{-\pi t}H_{-2it}^{(2)}(x)J_{-2it}(y).$ 
 Likewise, the term from \eqref{eq:bes22} with Bessel transform $J_{2it},$ which we will call $T_{F}^{+}(x,y) $ is  $ \pi i e^{\pi t}H_{2it}^{(2)}(x)J_{2it}(y).$ 
 
 This gives 
 
 \begin{equation}\label{firstbes} (T_{F}^{-}+T_F^{+})(x,y)= \pi i \big(-e^{-\pi t}H_{-2it}^{(2)}(x)J_{-2it}(y) + e^{\pi t}H_{2it}^{(2)}(x)J_{2it}(y)\big).\end{equation}
 
 Remember the aim of the proposition is for the function $G(z):= F(z) + F(-z).$ Similar calculations are now done for $F(-z).$ The calculations up to \eqref{eq:bess} are the same except we make the change of variables $w\rightarrow \frac{xy}{i w}$  here giving
\begin{equation}\label{eq:bess11}T_{F(-z)}^{-}(x,y):=\int_0^{i\infty}
  \exp \left(-(
 \frac{x^2+y^2}{2w})\right) \exp \left( \frac{w}{2}\right)J_
 {-2it}(\frac{iyx}{w})\frac{dw}{w}. \end{equation} 
 
 By a similar use of equations \eqref{eq:Wat2}, \eqref{eq:Wat3}, one obtains for \eqref{eq:bess11} $\pi i e^{\pi t}H_{-2it}^{(1)}(x)J_{-2it}(y).$ For the  $J_{2it}$ transform, which we label $T_{F(-z)}^{+}(x,y)$ one gets analogously $-\pi i e^{-\pi t}H_{2it}^{(1)}(x)J_{2it}(y).$  
 
 Thus, $$W(x,y):=T_{F}^{-}(x,y)+T_{F}^{+}(x,y)+T_{F(-z)}^{-}(x,y)+T_{F(-z)}^{+}(x,y)=$$
 \begin{equation}\label{eq:gcomp}= \frac{1}{2\sin(\pi i t)} \pi i \Big(-e^{-\pi t}\big(H_{-2it}^{(2)}(x)J_{-2it}(y)+H_{2it}^{(1)}(x)J_{2it}(y)\big) + e^{\pi t}\big(H_{-2it}^{(1)}(x)J_{-2it}(y)+H_{2it}^{(2)}(x)J_{2it}(y)\big) \Big).\end{equation}

 $H_{\alpha}^{(i)}(x)$ can be expanded into $J-$Bessel functions as:
 \begin{equation}\label{han1}
H_{\alpha}^{(1)}(x)= \frac{J_{-\alpha}(x)-e^{(-\alpha \pi i)}J_{\alpha}(x)}{i \sin(\alpha \pi)},
\end{equation}
and
 \begin{equation}\label{han2}
H_{\alpha}^{(2)}(x)= \frac{J_{-\alpha}(x)-e^{(\alpha \pi i)}J_{\alpha}(x)}{-i \sin(\alpha \pi)}.
\end{equation}
 
 Then expanding the LHS of \eqref{eq:gcomp} using these identities we have, \begin{equation}
\frac{-\pi  e^{-\pi t}}{2\sin(\pi i t) \sin(2\pi i t)}\Bigg[\Big(J_{2it}(x)J_{-2it}(y)-e^{2\pi t}J_{-2it}(x)J_{-2it}(y)\Big)+\end{equation} 
$$\Big(J_{-2it}(x)J_{2it}(y)-e^{2\pi t}J_{2it}(x)J_{2it}(y)\Big)\Bigg]
$$

The RHS of \eqref{eq:gcomp} is 
 \begin{equation}
\frac{-\pi  e^{\pi t}}{ 2\sin(\pi i t)\sin(2\pi i t)}\Bigg[\Big(J_{-2it}(x)J_{2it}(y)-e^{2\pi t}J_{2it}(x)J_{2it}(y)\Big)+ \Big(J_{2it}(x)J_{-2it}(y)-e^{2\pi t}J_{-2it}(x)J_{-2it}(y)\Big)\Bigg]
\end{equation}

Regathering terms,  $W(x,y)$ equals $$ \frac{-\pi}{2\sin(\pi i t)\sin(2\pi i t)}\Bigg(J_{2it}(x)J_{2it}(y)[e^{-\pi t} +e^{\pi t}] +J_{-2it}(x)J_{2it}(y)[-e^{-\pi t} -e^{\pi t}]  $$

\begin{equation}+J_{2it}(x)J_{-2it}(y)[-e^{-\pi t} - e^{\pi t}] + J_{-2it}(x)J_{-2it}(y)[e^{-\pi t} + e^{\pi t}] \Bigg).\end{equation}

Using $\sin(2\pi i t)=2\cos(\pi i t)\sin(\pi i t)$ and $\cos(\pi i t)=\frac{e^{-\pi t} + e^{\pi t}}{2}$ and regathering terms again, 
\begin{equation}\label{eq:besfinal}
\frac{\pi}{2\sin(\pi i t)\sin(2\pi i t)}[e^{-\pi t} + e^{\pi t}][J_{-2it}(x)-J_{2it}(x)][ J_{-2it}(y)-J_{2it}(y)]=
\end{equation}
$$=\pi B_{2it}(x)B_{2it}(y).$$

Incorporating \eqref{eq:besfinal} into $h(G,t)$ we get
\begin{equation} \int_{-\infty}^{\infty} \int_{-\infty}^{\infty}
V(x)W(y)  \left(  \pi B_{2it}(x)B_{2it}(y)\right)
\frac{dx}{x}\frac{dy}{y}=
\end{equation}
$$=\pi \int_{-\infty}^{\infty} V(x) B_{2it}(x)\frac{dx}{x}
\int_{-\infty}^{\infty}W(y) B_{2it}(y)\frac{dy}{y}=\pi
h(V,t)h(W,t).
$$

 \end{proof}  

This proves Theorem \ref{main theorem}, and for now we define $V*W(z):=G(z).$

\subsection{Sears-Titchmarsh Inversion}\label{sears}

\begin{definition} Let $f\in L^{2}(\R^{+},\frac{dx}{x}),$ then
\begin{equation} f(x) = 4\int_0^{\infty} h(f,t) \tanh(\pi t) B_{2it}(x) tdt +
2\sum_{k>0,k\text{ even}} (k-1)J_{k-1}(x)h(f,k),\end{equation}
where $ h(f,t):=\int_0^{\infty} f(x)B_{2it}(x)\frac{dx}{x} $ and
$h(f,k) := i^{k}\int_0^{\infty} f(x)J_{k-1}(x)\frac{dx}{x}.$ Further if
$$f^{\infty}(x): = 4\int_0^{\infty}  h(f,t) \tanh(\pi t)
B_{2it}(x) tdt$$ and
$$f^{0}(x):= 2\sum_{k>0,k\text{ even}} (k-1)J_{k-1}(x) h(f,k),$$ then
$f(x)=f^{0}(x) + f^{\infty}(x).$ \end{definition} This is the
Sears-Titchmarsh inversion formula. See \cite{Iw} for reference.

\begin{prop}\label{pr5} Let $M(t) = h(V,t)h(W,t)$ then,\begin{align}
  \int_0^{\infty} V(x)W(x)
\frac{dx}{x} =  \left[\int_0^{\infty} V(x)W^{\infty}(x)
\frac{dx}{x} \right]+ \left[\int_0^{\infty} V(x)W^{0}(x)
\frac{dx}{x} \right] &= \\ 2\left(\left[\int_{-\infty}^\infty
M(t)\tanh(\pi t) t dt \right] + \left[ \sum_{k>0,k\text{ even}}
(k-1)M(k) \right]\right).
\end{align}

\end{prop}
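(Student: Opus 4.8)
The plan is to insert the Sears--Titchmarsh inversion formula for $W$, namely $W(x)=W^{\infty}(x)+W^{0}(x)$, multiply through by $V(x)$, and integrate against $\frac{dx}{x}$ over $(0,\infty)$. With this substitution the first displayed equality is immediate: it is only the linearity of the integral applied to the pointwise decomposition $W=W^{\infty}+W^{0}$ supplied by the inversion formula. All of the substance lies in the second equality, which I would prove by handling the continuous term $\int_0^\infty V(x)W^{\infty}(x)\frac{dx}{x}$ and the discrete term $\int_0^\infty V(x)W^{0}(x)\frac{dx}{x}$ separately.

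For the continuous term I would substitute $W^{\infty}(x)=4\int_0^\infty h(W,t)\tanh(\pi t)B_{2it}(x)\,t\,dt$ and interchange the $x$- and $t$-integrations. Because $V\in C_0^{\infty}(\R^{+})$ is supported on a compact subset of $(0,\infty)$ on which $B_{2it}(x)$ stays bounded, while $h(W,t)$ decays faster than any power of $t$, the iterated integral converges absolutely and Fubini is legitimate. The inner integral is precisely $\int_0^\infty V(x)B_{2it}(x)\frac{dx}{x}=h(V,t)$, so the continuous term equals $4\int_0^\infty h(V,t)h(W,t)\tanh(\pi t)\,t\,dt=4\int_0^\infty M(t)\tanh(\pi t)\,t\,dt$. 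The decisive point is that this integrand is even in $t$: the symmetry $B_{-2it}(x)=B_{2it}(x)$ forces $h(V,t)$ and $h(W,t)$, and therefore $M(t)$, to be even, while $\tanh(\pi t)\,t$ is even as a product of two odd functions. Folding $\int_0^\infty$ to $\tfrac12\int_{-\infty}^{\infty}$ then produces $2\int_{-\infty}^{\infty}M(t)\tanh(\pi t)\,t\,dt$, which is the first bracketed term on the right-hand side.

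For the discrete term I would substitute $W^{0}(x)=2\sum_{k>0,\,k\text{ even}}(k-1)J_{k-1}(x)h(W,k)$ and again interchange the sum with the $x$-integral; this is justified since $h(W,k)$ decays rapidly in $k$ and $J_{k-1}(x)$ is bounded on the support of $V$. Each inner integral is $\int_0^\infty V(x)J_{k-1}(x)\frac{dx}{x}$, and together with the matching factor coming from $W$ the two phases $i^{k}$ occurring in the definitions $h(V,k)=i^{k}\int_0^\infty V(x)J_{k-1}(x)\frac{dx}{x}$ and the analogue for $W$ multiply to $i^{2k}=1$ for even $k$; hence the inner integrals reassemble exactly into $M(k)=h(V,k)h(W,k)$. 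The discrete term therefore equals $2\sum_{k>0,\,k\text{ even}}(k-1)M(k)$, the second bracketed term, which completes the second equality.

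I expect the main obstacle to be the careful justification of the two interchanges of limiting operations---the $t$-integration and the $k$-summation against the $x$-integration. This in turn rests on the rapid decay of the spectral transforms $h(W,t)$ and $h(W,k)$ in their parameters, which follows from repeated integration by parts exploiting the smoothness and compact support of $W$, combined with uniform bounds on $B_{2it}(x)$ and $J_{k-1}(x)$ over the support of $V$. Once absolute convergence is secured, the remaining ingredients---the evenness of $M(t)\tanh(\pi t)\,t$ and the phase identity $i^{2k}=1$---are elementary.
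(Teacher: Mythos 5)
Your proposal is correct and is essentially the paper's own argument: the paper runs the identical computation in the reverse direction, expanding $2\int M(t)\tanh(\pi t)\,t\,dt$ and $2\sum(k-1)M(k)$, interchanging the $t$-integral (resp. $k$-sum) with the $x$-integral using the compact support of $V$, and invoking Sears--Titchmarsh inversion to reassemble $W^{\infty}$ and $W^{0}$, with the same evenness-in-$t$ folding you use. The only cosmetic difference is direction (you decompose $W$ first and recognize $h(V,t)$; the paper starts from the spectral side), so there is nothing substantive to distinguish the two.
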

\begin{proof}

Expressing $B_{2it}(x)$ as a difference of J-Bessel functions, it
is easy to see it is an even function in the variable $t$.
Exploiting this, we see by a change of variables, $$
2\int_{-\infty}^\infty M(t)\text{tanh}(\pi t) t dt=
4\int_{0}^\infty M(t)\text{tanh}(\pi t) t dt.$$ Expanding $M(t)$
 we have \begin{equation}4\int_{0}^\infty \left(\int_0^{\infty} V(x)B_{2it}(x)\frac{dx}{x} \right)
 h(W,t)\text{tanh}(\pi t) t
 dt.\end{equation} Now since $V$ has compact support we can and do interchange the integrals, \begin{equation}4\int_{0}^\infty V(x)\left(\int_0^{\infty} B_{2it}(x)
 h(W,t)\text{tanh}(\pi t) tdt\right) \frac{dx}{x}
 .\end{equation} By Sears-Titchmarsh inversion, this equals
 \begin{equation}\int_0^{\infty}V(x)W^{\infty}(x)\frac{dx}{x}
 .\end{equation}

 We now focus on
$2\sum_{2k>0,k\in \N} (k-1)M(k).$ Expanding $M(k)$ again, we get
\begin{equation} 2\sum_{k>0,k \text{ even}} (k-1)
\left(\int_0^{\infty} V(x)J_{k-1}(x)\frac{dx}{x} \right)
h(V,k).\end{equation} Interchanging the sum and the integral gets
\begin{equation} \int_0^{\infty} V(x) \left(2\sum_{k>0,k\text{ even}} (k-1)h(W,k)J_{k-1}(x) \right)\frac{dx}{x}= \int_0^{\infty}
V(x)W^{0}(x)\frac{dx}{x}.
\end{equation} Summing the parts from the B-Bessel and J-Bessel functions, we get our proposition
\begin{equation}
=\left[2\int_{-\infty}^\infty M(t)\text{tanh}(\pi t) t dt \right]
+ \left[2 \sum_{k>0,k\text{ even}} (k-1)M(k) \right]
 = \int_0^{\infty}
V(x)W(x)\frac{dx}{x}. \end{equation}

\end{proof}

From Section \ref{ann} we have shown \begin{equation}\label{gf} L=
\frac{6}{\pi^2}  \left(\delta_{l,l'}\int_0^\infty V(y)W(y)\frac{dy}{y} +
\sum_{n=1}^\infty \frac{S(l,l',n)}{n}(V*W)(\frac{4\pi
\sqrt{ll'}}{n})\right).\end{equation}

We now use the Sears-Titchmarsh inversion formula for $V*W(z)$ getting,
\begin{equation}\label{gg}  V*W(z) = 4\pi \left(\int_0^{\infty} M(t) \text{tanh}(\pi t) B_{2it}(z) tdt +
\sum_{k>0,k\text{ even}} (k-1)J_{k-1}(z)M(k)\right),\end{equation}
where as before
\begin{equation*}M(t)= h(V,t)h(W,t).\end{equation*} Remember the $\pi$ factor comes from Theorem \ref{main theorem}.

 While also using Proposition \ref{pr5} and equation
(\ref{gg}), we can write (\ref{gf}) as
\begin{equation}\label{yoy} \left[\frac{12}{\pi^2}\int_{-\infty}^\infty M(t)\text{tanh}(\pi t)
t dt \right] + \left[\frac{12}{\pi^2} \sum_{k>0,k\text{ even}}
(k-1)M(k) \right] + \end{equation} $$ +
\frac{48}{\pi}\left(\sum_{c=1}^\infty \frac{S(l,l',c)}{c}
\left(\int_0^{\infty} M(t) \text{tanh}(\pi t)
B_{2it}(\frac{4\pi\sqrt{ll'}}{c}) tdt + \sum_{k>0,k\text{ even}}
(k-1)J_{k-1}(\frac{4\pi\sqrt{ll'}}{c})M(k)\right)\right).$$

\begin{theorem}(Kuznetsov trace formula) Denote the Maass form of eigenvalue $1/4 + t^2$ by $\phi_t,$ and let $\eta(l,1/2+it):=2\pi^{1+it}{\cosh(\pi t)}^{-1/2} \frac{\tau_{it}(n)}{\Gamma(1/2+it)\zeta(1+2it)},$ 
where $\tau_{it}(n)=\sum_{ab=n}(a/b)^{it}.$ Then 

\begin{equation}   \sum_{\phi_t } G(t_{\phi})
{a_l(\phi_t)}{\overline{a_l'(\phi_t)}}  + \frac{1}{4
\pi}\int_{-\infty}^\infty G(t)\eta(l,1/2+it)\overline{\eta(l',1/2+it)}
dt=\end{equation}

\begin{equation*}  = \delta_{l,l'}G_0 +
\sum_{c=1}\frac{S(l,l',c)}{c}G^{+}(4\pi
\sqrt{ll'}/c),\end{equation*} where $G_0:=
\frac{1}{\pi}\int_{-\infty}^\infty G(t)\tanh(\pi t) t dt,$ and
$G^{+}(x): = 4\int_0^{\infty} G(t) \tanh(\pi t) B_{2it}(x) tdt.$
\end{theorem}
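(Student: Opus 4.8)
The plan is to read the asserted identity as the spectral-to-geometric form of the Kuznetsov trace formula restricted to the Maass and continuous spectrum, dual to the compactly supported formula \eqref{eq:kuznet}, and to move between the two forms by Sears-Titchmarsh inversion. Given $G$ even and with enough decay and analyticity for the inversion to apply, I would first manufacture a test function on $\R^+$ carrying the prescribed spectral data. Set
$$V_G(x) := 4\int_0^\infty G(t)\tanh(\pi t)B_{2it}(x)\,t\,dt = G^{+}(x),$$
that is, take $V_G$ to be the purely continuous Sears-Titchmarsh reconstruction with no holomorphic component. Because the discrete $J$-Bessel part $f^0$ and the continuous $B$-Bessel part $f^\infty$ are orthogonal in the Sears-Titchmarsh decomposition, this forces $h(V_G,t)=G(t)$ for real $t$ and $h(V_G,k)=0$ for every even integer $k$.

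Next I would substitute $V_G$ into \eqref{eq:kuznet}, taking the two Fourier indices to be $l$ and $l'$. On the spectral side every holomorphic form contributes $h(V_G,k)=0$ and drops out, so the orthonormal sum collapses to the Maass forms $\phi_t$ weighted by $G(t_\phi)$; simultaneously $h(V_G,t)=G(t)$ turns the Eisenstein integral into exactly $\frac{1}{4\pi}\int_{-\infty}^\infty G(t)\eta(l,1/2+it)\overline{\eta(l',1/2+it)}\,dt$. This reproduces the left-hand side of the statement. On the geometric side \eqref{eq:kuznet} returns $\sum_{c}\frac1c S(l,l',c)V_G(4\pi\sqrt{ll'}/c)=\sum_c \frac{S(l,l',c)}{c}G^{+}(4\pi\sqrt{ll'}/c)$, which is precisely the Kloosterman term in the statement. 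The orthogonality built into the Sears-Titchmarsh inversion is what guarantees the clean separation of the discrete holomorphic spectrum from the Maass and continuous spectrum here.

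The main obstacle is the diagonal term $\delta_{l,l'}G_0$. It is absent from \eqref{eq:kuznet}, which for compactly supported $V$ is a finite sum with no such contribution; and since $G^{+}$ is a Bessel transform it is \emph{not} compactly supported, so the substitution above is not literally licensed and the sum over $c$ no longer truncates. The diagonal term is exactly the main-term defect created by this non-compactness. I would supply it through the standard Poincar\'e-series derivation of \cite{Iw}: expanding the Poincar\'e series $P_l(z,s)$ both spectrally and through its Fourier coefficients, the identity coset produces the Kronecker $\delta_{l,l'}$ while the remaining cosets produce the Kloosterman sums $S(l,l',c)$. The weight $\tanh(\pi t)\,t$ in $G_0=\frac1\pi\int_{-\infty}^\infty G(t)\tanh(\pi t)\,t\,dt$ then emerges from the spectral measure together with the Bessel integral \eqref{eq:Wat}, and a contour shift in the auxiliary $s$-variable upgrades the $s$-dependent weight to a general $G$. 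Matching the two expansions, with $G^{+}$ and $G_0$ identified via the $B$-Bessel representation and Proposition \ref{pr5}, yields the stated formula.
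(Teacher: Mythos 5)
The paper offers no proof of this statement at all: it is quoted as the classical Kuznetsov trace formula and the reader is sent to \cite{Iw}, so the benchmark is the standard Poincar\'e-series derivation, which your final paragraph correctly names. The problem is the argument you build before that, and the way you splice the two together. The substitution $V=G^{+}$ into \eqref{eq:kuznet} is not merely ``not literally licensed''; if it could be licensed at all (say by taking $V_j\in C_0^{\infty}(\R^{+})$ with $V_j\to G^{+}$ and passing to the limit on both sides, using $h(V_j,t)\to G(t)$, $h(V_j,k)\to 0$), it would yield the identity \emph{without} the term $\delta_{l,l'}G_0$, which for $l=l'$ and $G_0\neq 0$ is false. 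So this route, taken at face value, proves a wrong statement. The failure point is that the Kloosterman side $\sum_{c}\frac{1}{c}S(l,l',c)V_j(4\pi\sqrt{ll'}/c)$ does not converge uniformly as $V_j\to G^{+}$: since $G^{+}$ is not supported away from $0$, infinitely many moduli $c$ enter, and the tail of the $c$-sum has a non-vanishing limit. That limit is indeed $\delta_{l,l'}G_0$, but establishing this requires a genuine computation --- an averaging of Kloosterman/Ramanujan sums of exactly the type the paper carries out in its Sections 7--8, where $\delta_{l,l'}$ emerges from the $n=0$ Ramanujan-sum term --- and your sentence asserting that the diagonal ``is exactly the main-term defect created by this non-compactness'' simply assumes the conclusion. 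Given \eqref{eq:kuznet}, that assertion is equivalent to the theorem; it is the whole content.

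The patch you propose does not repair this, because it cannot be grafted on termwise: unfolding $\langle P_l,P_{l'}\rangle$ in the Poincar\'e-series method produces the diagonal term, the Kloosterman terms, and the spectral side simultaneously, for one and the same test function, so once you invoke it you obtain the entire formula outright and the Sears--Titchmarsh detour becomes superfluous. Using the unfolding only to ``supply'' the missing $\delta_{l,l'}G_0$ while keeping the other terms from an invalid substitution is not a proof. A correct write-up would either (i) run the Poincar\'e-series derivation in full (this is the proof in \cite{Iw}, and is all the paper itself relies on), or (ii) make the defect computation rigorous by showing that the tail of the Kloosterman sum converges to $\delta_{l,l'}G_0$ as $V_j \to G^{+}$, which is the hard analytic content you have skipped. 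For what it is worth, the claims in your first paragraph are sound: $h(G^{+},k)=0$ for $k$ even follows from the Weber--Schafheitlin orthogonality of $J_{k-1}$ against $B_{2it}$, and $h(G^{+},t)=G(t)$ is the continuous-spectrum half of Sears--Titchmarsh inversion; the gap is entirely in the origin of the diagonal term.
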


\begin{theorem}(Petersson trace
formula) Let the holomorphic forms of weight $k$ be denoted as $\phi_k,$  then

\begin{equation} \sum_{k>0,k \text{ even}} \sum_{\phi_k } G(k_{\phi}){a_l(\phi_k)}{\overline{a_l'(\phi_k)}}
=  \frac{1}{\pi}\sum_{k>0,k \text{ even}} (k-1)\delta_{l,l'}G(k) +
\sum_{c=1}^{\infty} \frac{S(l,l',c)}{c} \widehat{G}(4\pi
\sqrt{ll'}/c),
\end{equation} where
\begin{equation} \widehat{G}(x) = 4\sum_{k>0,k \text{ even}}
(k-1)G(k)J_{k-1}(x). \end{equation}
\end{theorem}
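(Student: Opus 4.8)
The plan is to reduce the statement to the classical Petersson formula for a single even weight $k$ and then sum that fixed-weight identity against the test function $G(k)$. The weight-by-weight formula is where all the genuine trace-formula content sits, and it is classical; the theorem as stated is really an assembly of these identities in the normalization of Section 4.

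First I would record the fixed-weight Petersson formula in that normalization. Using $a_n(\phi)=\sqrt{\pi^{-k}\Gamma(k)/(4n)^{k-1}}\,c_n(\phi)$ and starting from the standard identity obtained by computing $\langle P_l,\phi\rangle$ for the $l$-th weight-$k$ Poincar\'e series in two ways (spectrally, recovering the $l'$-th Fourier coefficient, and by unfolding, producing the Kloosterman--Bessel term), one is led to
\[
\sum_{\phi_k} a_l(\phi)\overline{a_{l'}(\phi)} = \frac{k-1}{\pi}\,\delta_{l,l'} + 2(k-1)\,i^{-k}\sum_{c=1}^{\infty}\frac{S(l,l',c)}{c}\,J_{k-1}\!\left(\frac{4\pi\sqrt{ll'}}{c}\right),
\]
valid for each even $k>0$; equivalently this may simply be quoted from \cite{Iw} or \cite{IK}. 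Next I would multiply by $G(k)$, sum over even $k>0$, and interchange the sum over $k$ with the sum over the Kloosterman moduli $c$. The diagonal then contributes $\frac1\pi\sum_{k>0,\,k\text{ even}}(k-1)G(k)\,\delta_{l,l'}$, which is exactly the stated main term, while the off-diagonal contributes $\sum_{c}\frac{S(l,l',c)}{c}\bigl(2\sum_k (k-1)i^{-k}G(k)J_{k-1}(4\pi\sqrt{ll'}/c)\bigr)$. After aligning the phases $i^{-k}$ with the factor $i^k$ built into $h(V,k)$ in the definition of the Bessel transform, the inner sum becomes precisely $\widehat{G}(4\pi\sqrt{ll'}/c)=4\sum_{k}(k-1)G(k)J_{k-1}(4\pi\sqrt{ll'}/c)$, giving the claimed formula.

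The hard part will be justifying the interchange of the weight-sum and the $c$-sum, that is, absolute convergence of the resulting double series. This rests on two estimates: super-exponential decay of $G(k)=h(V,k)h(W,k)$ as $k\to\infty$, which holds because $V,W$ are smooth of compact support in $\R^{+}$ so that $h(V,k)\ll_A k^{-A}$ (indeed the power-series bound $J_{k-1}(x)\ll (x/2)^{k-1}/\Gamma(k)$ forces faster-than-polynomial decay); and the same Bessel bound applied to $J_{k-1}(4\pi\sqrt{ll'}/c)$, which makes $\sum_c S(l,l',c)c^{-1}J_{k-1}(4\pi\sqrt{ll'}/c)$ converge and dominates the $c$-sum uniformly in $k$. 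Together these let Fubini/Tonelli be applied and the two sums interchanged freely.

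A secondary, purely bookkeeping point is to track the constants against the conventions of Section 4 and Theorem \ref{main theorem}: the phases $i^{-k}$ (which cancel against those in $h(V,k)$ and $h(W,k)$), the factor $2$ versus $4$ in $\widehat{G}$, and the $\tfrac1\pi$ in the diagonal. One should also note that the diagonal does \emph{not} arise by a naive substitution of a Sears--Titchmarsh synthesis into the Kloosterman-side formula \eqref{eq:kuznet}, which carries no diagonal term; the diagonal is intrinsic to the fixed-weight Petersson identity above, so it is cleanest to produce it there rather than attempt to extract it by inversion. Once the normalizations are reconciled, the two displayed formulas in the theorem match exactly.
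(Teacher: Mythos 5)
The paper never proves this theorem at all: both trace formulas are quoted and dispatched with ``See \cite{Iw} for more details,'' so your instinct to supply the classical derivation is the right one, and your architecture is the standard route behind that citation. The skeleton is sound: the fixed-weight Petersson identity (via Poincar\'e series), which in the paper's normalization indeed reads $\sum_{\phi_k}a_l(\phi)\overline{a_{l'}(\phi)}=\frac{k-1}{\pi}\delta_{l,l'}+2(k-1)i^{-k}\sum_c \frac{S(l,l',c)}{c}J_{k-1}(4\pi\sqrt{ll'}/c)$, summed against $G(k)$, with the interchange of the $k$- and $c$-sums justified by decay of the transforms and of $J_{k-1}$. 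Your observation that the diagonal is intrinsic to the fixed-weight identity and cannot be extracted by naively feeding a Sears--Titchmarsh synthesis into \eqref{eq:kuznet} is also correct and worth keeping: the Neumann synthesis decays only like $x^{-1/2}$ and is not an admissible test function there. (One small repair: for $k=2$ the trivial bound on Kloosterman sums does not give convergence of the $c$-sum, since $J_1(x)\asymp x$ near $0$ yields $\sum_c c^{-1}$; you need Weil's bound $|S(l,l',c)|\ll c^{1/2+\epsilon}$ for that weight.)

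The genuine gap is the final step, which you declare to be bookkeeping and assert without computation. Summing your (correct) fixed-weight formula produces the off-diagonal transform $2\sum_k(k-1)i^{-k}G(k)J_{k-1}(x)$, whereas the statement asserts $\widehat{G}(x)=4\sum_k(k-1)G(k)J_{k-1}(x)$. These do not agree: the factor $2$ versus $4$ is absolute, and for even $k$ the phase $i^{-k}=(-1)^{k/2}$ is a genuine alternating sign, equal to $-1$ when $k\equiv 2\pmod 4$, so it changes the value of the sum rather than its presentation. Your proposed mechanism --- cancelling $i^{-k}$ against the $i^k$'s built into $h(V,k)$ and $h(W,k)$ --- does not exist: the theorem is stated for an arbitrary test function $G$, and even in the paper's own application $G(k)=h(V,k)h(W,k)$ carries total phase $i^{2k}=1$ for even $k$, leaving nothing available to absorb $i^{-k}$. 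So what you have actually proved is the formula with $2(k-1)i^{-k}$ in place of $4(k-1)$; to finish you must either exhibit a coherent normalization under which the printed constants are correct, or conclude that the statement requires the phase and the factor $2$. Be aware that appealing to the paper's surrounding conventions will not rescue this, since they are themselves not internally consistent on exactly these points (the $i^k$ carried inside $h(f,k)$ in its Sears--Titchmarsh formula, and the constant $48/\pi$ versus the $24/\pi$ that its own equations produce); the constants here have to be settled by an independent computation, which is precisely the part your write-up omits.
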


See \cite{Iw} for more details of these two trace formulas.

Incorporating these trace formulas into (\ref{yoy}), we get
(\ref{gf}) equals

\begin{equation}   \frac{12}{\pi}\left( \sum_{\phi_t } M(t_{\phi})
{a_l(\phi_t)}{\overline{a_l'(\phi_t)}}  + \frac{1}{4
\pi}\int_{-\infty}^\infty M(t)\eta(l,1/2+it)\overline{\eta(l',1/2+it)} dt +
\sum_{k>0,k \text{ even}} \sum_{\phi_k }
M(k_{\phi}){a_l(\phi_k)}{a_l'(\phi_k)}\right).\end{equation} This
proves Theorem \ref{main theorem1}.

\section{Matching for the continuous spectrum}
We prove Theorems \ref{cupcon1}, \ref{cupcon2}, and \ref{cupcon3} in this section. For Rankin-Selberg orthogonality one needs to match cuspidal terms with cuspidal terms, and continuous terms with continuous terms, i.e. showing  \begin{equation}\label{eq:matchcc} \lim_{X \to \infty} \frac{1}{X} \sum_{n \in \Z}
g(n/X) S_{n,l}(V)  S_{n,l'}(W)=\frac{12}{\pi}S_{l,l'}(V*W),
\end{equation}
and
\begin{equation}\label{eq:matchcc1} \lim_{X \to \infty} \frac{1}{X} \sum_{n \in \Z}
g(n/X) C_{n,l}(V)  C_{n,l'}(W)=\frac{12}{\pi}C_{l,l'}(V*W).
\end{equation}

We must also show cuspidal terms must be orthogonal to the continuous terms, or 

 \begin{equation}\label{matchcuscon}\lim_{X \to \infty} \frac{1}{X} \sum_{n \in \Z}
g(n/X) S_{n,l}(V)  C_{n,l'}(W)=0.
\end{equation}

We prove these propositions here.

\begin{prop}
\begin{equation}\label{matchconc}\lim_{X \to \infty} \frac{1}{X} \sum_{n \in \Z}
g(n/X) C_{n,l}(V)  C_{n,l'}(W)=\frac{12}{\pi}C_{l,l'}(V*W).
\end{equation}

\end{prop}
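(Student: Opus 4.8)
The plan is to argue entirely on the spectral side, inserting the definitions of $C_{n,l}(V)$ and $C_{n,l'}(W)$ and isolating the only genuinely arithmetic object, the product $\tau_{it_1}(n)\tau_{it_2}(n)$. Writing out both continuous‑spectrum terms, the left side of \eqref{matchconc} becomes
\[
\lim_{X\to\infty}\frac{1}{16\pi^2 X}\sum_{n\in\Z}g(n/X)\int_{-\infty}^{\infty}\int_{-\infty}^{\infty}h(V,t_1)h(W,t_2)\,\eta(n,\tfrac12+it_1)\eta(n,\tfrac12+it_2)\,\overline{\eta(l,\tfrac12+it_1)}\,\overline{\eta(l',\tfrac12+it_2)}\,dt_1\,dt_2 .
\]
Since $V,W\in C_0^\infty(\R^+)$, the transforms $h(V,t),h(W,t)$ decay rapidly in $t$, so the $t_1,t_2$ integrals are effectively of compact support and Fubini justifies interchanging them with the (finite) $n$‑sum. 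The $n=0$ term is a single term divided by $X$ and drops out; the $n<0$ terms combine with $n>0$ after $t_j\to-t_j$, using that $B_{2it}$ (hence $h(V,t)$) and $\tau_{it}(n)$ are even in $t$. Thus the whole problem reduces to understanding, for fixed $t_1,t_2$, the weighted sum $\frac1X\sum_{n\ge1}g(n/X)\tau_{it_1}(n)\tau_{it_2}(n)$, the only $n$‑dependent factor.

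I would evaluate this sum by Mellin inversion against $G(s)=\int_0^\infty g(x)x^{s-1}dx$, using Ramanujan's identity
\[
\sum_{n=1}^{\infty}\frac{\tau_{it_1}(n)\tau_{it_2}(n)}{n^s}=\frac{\zeta\bigl(s+i(t_1+t_2)\bigr)\,\zeta\bigl(s+i(t_1-t_2)\bigr)\,\zeta\bigl(s-i(t_1-t_2)\bigr)\,\zeta\bigl(s-i(t_1+t_2)\bigr)}{\zeta(2s)}=:D(s;t_1,t_2),
\]
so that the weighted sum equals $\frac{1}{2\pi i}\int_{(\sigma)}G(s)X^{s-1}D(s;t_1,t_2)\,ds$ with $\sigma>1$. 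Shifting the contour to $\Re s=\tfrac12+\delta$ (staying to the right of the pole of $\zeta(2s)$ at $s=\tfrac12$), the shifted integral is $O(X^{-1/2+\delta})$ by the super‑polynomial decay of $G$ together with standard convexity bounds for $\zeta$ on vertical lines, and hence vanishes in the limit. One is left with the four simple poles on $\Re s=1$, at $s=1\pm i(t_1+t_2)$ and $s=1\pm i(t_1-t_2)$, each carrying an oscillatory factor such as $X^{i(t_1-t_2)}$ and, decisively, a surviving zeta factor of the shape $\zeta(1\pm2i(t_1\pm t_2))$ which is singular along the diagonal $t_1=t_2$ or the anti‑diagonal $t_1=-t_2$. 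For instance the residue at $s=1+i(t_1-t_2)$ is $\zeta(1+2it_1)\zeta(1+2i(t_1-t_2))\zeta(1-2it_2)/\zeta(2+2i(t_1-t_2))$.

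The heart of the matter, and the main obstacle, is to re‑insert these residues into the $t_1,t_2$ integral and show that letting $X\to\infty$ localizes everything onto $t_1=t_2$ (and symmetrically $t_1=-t_2$). Writing $u=t_1-t_2$ and using $\zeta(1+2iu)\sim(2iu)^{-1}$ near $u=0$, the pairing of the residue with the smooth weight is governed by the principal‑value limit $\mathrm{p.v.}\int e^{iu\log X}\psi(u)\,u^{-1}\,du\to i\pi\,\psi(0)$, which collapses the double integral to a single integral over the diagonal, while the genuinely oscillatory off‑diagonal pieces die by Riemann–Lebesgue. Applying the same mechanism to all four poles and invoking the $t\to-t$ symmetry to identify the diagonal and anti‑diagonal contributions, the surviving integral is a single integral in one variable $t$. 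The delicate bookkeeping then consists of assembling the elementary factors $G(1)=\int_0^\infty g=1$, the residue of $\zeta$ at $1$, the value $\zeta(2)=\pi^2/6$ from $\zeta(2s)$, the $\tfrac{\pi}{2}$ from each principal‑value evaluation, and the archimedean normalizations in $\eta$, so that the main term comes out to $\frac{3}{\pi}\int_{-\infty}^{\infty}h(V,t)h(W,t)\,\eta(l,\tfrac12+it)\overline{\eta(l',\tfrac12+it)}\,dt$. Finally, Theorem \ref{main theorem} in the continuous‑parameter form $h(V*W,t)=\pi\,h(V,t)h(W,t)$ rewrites this as $\frac{12}{\pi}C_{l,l'}(V*W)$, which is \eqref{matchconc}. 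I expect the most demanding steps to be the rigorous justification of interchanging the limit $X\to\infty$ with the spectral integration across the diagonal $\zeta(1+2i(t_1-t_2))$ singularity, and the precise accounting of the four residues that forces the constant to be exactly $\frac{12}{\pi}$.
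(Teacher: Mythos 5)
Your proposal follows essentially the same route as the paper's own proof: the paper likewise reduces, via Ramanujan's identity and Mellin inversion, to the four-fold product of zeta values over $\zeta(2s)$, shifts the contour to $\Re(s)=\tfrac12$ to pick up the four poles at $s=1\pm i(t_1\pm t_2)$, and then localizes the $(t_1,t_2)$-integration onto the diagonal (and anti-diagonal, identified by evenness in $t$) using exactly the principal-value mechanism you describe, quoted there as Venkatesh's Lemma 10 with $k=\pm\log X$, before invoking Theorem \ref{main theorem} to produce $\frac{12}{\pi}C_{l,l'}(V*W)$. Your bookkeeping, including the $\tfrac{\pi}{2}$ per pole from pairing the Laurent coefficient of $\zeta(1+2iu)$ with the principal value, is consistent with the claimed constant.
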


\begin{proof}
Our claim fully written out is
$$\lim_{X \to \infty} \frac{1}{X} \sum_{n} g(n/X)\big[\frac{1}{4\pi}\int_{-\infty}^{\infty}h(V,T)\eta(n,1/2+iT)\overline{\eta(l,1/2+iT)}dT\big]\times$$
$$ \big[\frac{1}{4\pi}\int_{-\infty}^{\infty}h(W,T))\eta(n,1/2+it)\overline{\eta(l',1/2+it)dt}\big]=
\frac{3}{\pi^2}\int_{-\infty}^{\infty}h(V,z)h(W,z)\eta(l,1/2+iz)\overline{\eta(l',1/2+iz)}dz$$

Assuming the interchanging of sums and using the functional equation for the gamma function: $$\Gamma(1/2+it)\Gamma(1/2-it)=\frac{\pi}{\cosh{\pi t}},$$ this boils down to studying 
\begin{equation}
 \int_{T} \frac{h(V,T)\tau_{iT}(l)}{|\zeta(1-2iT)|^2} \int_{t} \frac{h(W,t)\tau_{it}(l')}{|\zeta(1-2it)|^2} \times 
\end{equation}

$$\times \frac{1}{2 \pi i} \int_{\sigma=4} \hat{g}(s)[\Pi_{\pm,\pm}\zeta(s\pm iT \pm it)]\frac{X^{s}}{\zeta(2s)}ds.dt.dT.$$ 
The last equation follows from mellin inversion and Ramanujan's formula.

Now doing a contour shift from $\sigma \to 1/2,$ we pick up poles at $1\pm iT \pm it,$.  The left over integral is $O_{T,t}(X^{1/2}),$ and is negligible. The term to compute then is

\begin{equation}\label{rz}
\lim_{X \to \infty} \frac{1}{2 \pi i}\int_{T} h(V,T)\tau_{iT}(l) \int_{t}h(W,t) \tau_{it}(l') \Big[\frac{X^{-iT}}{\zeta(1+2iT)}\Big(\frac{X^{-it}\hat{g}(1-iT-it)\zeta(1-2iT-2it)}{\zeta(1+2it)\zeta(2-2iT-2it)} \Big) +\end{equation}

$$+ \Big(\frac{X^{it}\hat{g}(1-iT+it)\zeta(1-2iT+2it)}{\zeta(1-2it)\zeta(2-2iT+2it)} \Big) \Big]+
$$

$$+\Big[\frac{X^{iT}}{\zeta(1-2iT)}\Big(\frac{X^{-it}\hat{g}(1+iT-it)\zeta(1+2iT-2it)}{\zeta(1+2it)\zeta(2+2iT-2it)} \Big) +$$

$$+ \Big(\frac{X^{it}\hat{g}(1+iT+it)\zeta(1+2iT+2it)}{\zeta(1-2it)\zeta(2+2iT+2it)} \Big) \Big]dtdT.
$$
The term $X$ has been factored out of the residue calculation, so (\ref{rz}) should be $O(1)$ after taking the limit. 
It is sufficient to study the first of these four integrals. We make a change of variables $T \to T-t$ to get \begin{equation}
\lim_{X \to \infty}  \frac{1}{2 \pi i}\int_{t}h(W,t) \frac{\tau_{it}(l')}{\zeta(1+2it)}  \int_{-\infty}^{\infty} \frac{h(V,T-t)\tau_{i(T-t)}(l) X^{-iT} \hat{g}(1-iT)\zeta(1-2iT)dT}{\zeta(2-2iT)  \zeta(1+2i(T-t))}
\end{equation}
 
Here $\zeta(1-2iT)$ has a pole at $T=0,$ and to understand this we use the following lemma. 

\begin{lemma}
Let $H$ be a differentiable function in $L^{1}(\mathbb{R}),$ then

\medskip
$PV \int_{-\infty}^{\infty} H(x)e^{ikx}\frac{dx}{x}:= \lim_{\epsilon \to 0^{+}} \int_{|x| \geq \epsilon}H(x)e^{ikx}\frac{dx}{x} \to \pm \pi i H(0)$ as $\pm k \to \infty.$ 
\end{lemma}
\begin{proof}
See \cite{Venk1}, Lemma 10. 
\end{proof}

 Applying this lemma for $k=-\log X,$ we obtain: \begin{equation}\label{pvcs}
\hat{g}(1)\frac{1}{2\zeta(2)}
\int_{-\infty}^{\infty} \frac{h(V,-t)h(W,t) \tau_{it}(l')\tau_{-it}(l)}{\zeta(1+2it)  \zeta(1-2it)}dt
\end{equation}
Now $\hat{g}(1)=1$ by definition, and $h(V,-t)=h(V,t),$ because $B_{2it}(x)$ is real valued. Likewise,  $\tau_{-it}(l)=\tau_{it}(l).$

In recovering $\eta(l,1/2+it)$ from $\tau_{it}(l),$ (\ref{pvcs}) becomes $$ \frac{3}{4\pi^{2}}
\int_{-\infty}^{\infty} h(V,t)h(W,t) \eta(l,1/2+it)\overline{\eta(l',1/2+it)}dt.$$  Summing then over the four integrals in (\ref{rz}) completes our proposition and Theorem \ref{cupcon2}.

\end{proof}

\begin{prop} \begin{equation}\label{matchcuscon}\lim_{X \to \infty} \frac{1}{X} \sum_{n \in \Z}
g(n/X) S_{n,l}(V)  C_{n,l'}(W)=0.
\end{equation}
 \end{prop}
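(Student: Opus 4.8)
The plan is to mirror the structure of the proof just given for the continuous-continuous matching (Proposition/Theorem~\ref{cupcon2}), but now with one spectral sum $S_{n,l}(V)$ replaced by a sum over cusp forms and the other left as the continuous contribution $C_{n,l'}(W)$. The target is to show the mixed limit vanishes, so the essential mechanism must be an \emph{orthogonality}: after taking the limit in $X$, the cuspidal Fourier coefficients and the divisor-function coefficients $\eta(\cdot,1/2+it)$ of the Eisenstein series fail to resonate, and no pole survives to produce a nonzero residue.

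First I would write the claim out fully, expanding $S_{n,l}(V)=\sum_{\phi}h(V,\lambda_\phi)a_n(\phi)\overline{a_l(\phi)}$ and $C_{n,l'}(W)=\frac{1}{4\pi}\int_{-\infty}^{\infty}h(W,t)\eta(n,1/2+it)\overline{\eta(l',1/2+it)}\,dt$, and insert the smooth weight $g(n/X)$. Assuming (as in the earlier propositions) that the spectral sum, the $t$-integral, and the limit may be interchanged, the computation reduces to understanding the smooth sum over $n$ of the product $a_n(\phi)\,\eta(n,1/2+it)$, weighted by $g(n/X)$ and divided by $X$. Via Mellin inversion this becomes a contour integral whose integrand is governed by the Dirichlet series $\sum_n a_n(\phi)\tau_{it}(n)n^{-s}$, which up to the normalizing $\zeta(2s)$ is precisely the Rankin--Selberg convolution $L(s,\phi\times E_{it})$ of the cusp form $\phi$ against an Eisenstein series. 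The key analytic input is that this Rankin--Selberg $L$-function is \emph{entire} in $s$: a cusp form is never equal to the conjugate of an Eisenstein series, so there is no pole at $s=1$ to contribute a residue.

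The steps, in order, are: (i) expand both factors and apply the $g$-weighted sum over $n$; (ii) recognize the $n$-sum, after Mellin inversion at some $\sigma>1$, as $\frac{1}{2\pi i}\int_{(\sigma)}\hat g(s)\frac{L(s,\phi\times E_{it})}{\zeta(2s)}X^s\,ds$; (iii) shift the contour to the left of $\Re(s)=1$, picking up \emph{no} pole precisely because $\phi$ is cuspidal and the convolution against the continuous spectrum is entire there; (iv) bound the shifted integral by $O(X^{-\epsilon})$ uniformly enough to survive the remaining spectral sum over $\phi$ and integral over $t$, so that after taking $X\to\infty$ the whole expression tends to $0$. The absence of a pole is the entire content of the orthogonality between the cuspidal and continuous spectra, and it is exactly analogous to the $\delta_{\phi,\psi}$ mechanism in the cuspidal--cuspidal case, except here the Kronecker delta is always zero.

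The main obstacle I expect is not locating the pole but \emph{justifying the interchanges and the uniformity of the error term}. The continuous spectrum carries an integral over $t$, and the cuspidal side carries an infinite spectral sum over $\phi$; the decay $O(X^{-\epsilon})$ from the contour shift must be shown uniform in both $t$ and $\lambda_\phi$, so that dominated convergence applies and the limit can be passed inside. This requires controlling the convergence of the Rankin--Selberg series in the critical strip uniformly as the Eisenstein parameter $t$ varies, together with polynomial control in $t$ of $\tau_{it}(\cdot)$ and $1/\zeta(1+2it)$ along the line, much as the previous proposition invoked the functional equation of $\Gamma$ and bounds on $\zeta$ on the $1$-line. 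Handling the potential issue at $t=0$, where $\zeta(1\pm 2it)$ has a pole (as managed by the principal-value Lemma in the preceding proof), is the one delicate point I would treat carefully, but since no main term is produced it should contribute only to the lower-order estimate rather than to a surviving residue.
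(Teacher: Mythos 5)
Your proposal is correct and follows essentially the same route as the paper: Mellin inversion on the $g$-weighted $n$-sum, recognition of the resulting Dirichlet series as the Rankin--Selberg convolution of the cusp form $\phi$ against an Eisenstein series, the observation that it has no pole at $s=1$, and a contour shift (the paper uses $\sigma = 4 \to 3/4$) giving a negative power of $X$ so the limit vanishes. The only difference is one of justification: where you appeal to general Rankin--Selberg theory (``a cusp form is never the conjugate of an Eisenstein series''), the paper makes the absence of a pole concrete via the classical factorization $\sum_n a_n(\phi)\tau_{it}(n)n^{-s} \approx L(\phi,s+it)L(\phi,s-it)/\zeta(2s)$, each standard $L$-factor being entire by Hecke --- the same mechanism, stated more explicitly.
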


\begin{proof}
Using Mellin inversion (\ref{matchcuscon}) is written as \begin{equation}\lim_{X \to \infty} \frac{1}{X} \frac{1}{4\pi}\sum_{\phi}
h(V,\lambda_{\phi}) \int_{-\infty}^\infty h(V,t)  \big[ \frac{1}{2 \pi i} \int_{\sigma=4} \hat{g}(s) L(s)X^{s}ds\big]dt,\end{equation} where $$L(s,t)=\sum_{n=1}^{\infty} \frac{a_n(\phi)\eta(l,1/2+it)}{n^s}.$$ Now up to some analytically harmless factors, which come from normalizations from the trace formula, $$L(s,t)\approx \frac{L(\phi,s+it)L(\phi,s-it)}{\zeta(2s)}.$$ This has no pole at $s=1,$ and thus we can do a contour shift in the $\hat{g}$ integral from $4 \to 3/4.$ The integral in the $s$ variable is certainly bounded and the limit is $$\lim_{X \to \infty} O_{t,\phi}(X^{-1/4})=0.$$ This completes Theorem \ref{cupcon3}.

\end{proof}

Incorporating these propositions into Theorem \ref{main theorem1} gives Theorem \ref{cupcon1}.

\section{Reduction to a single archimedean parameter}

In the last section we showed \begin{equation}\label{eq:matchcc} \lim_{X \to \infty} \frac{1}{X} \sum_{n \in \Z}
g(n/X) S_{n,l}(V)  S_{n,l'}(W)=S_{l,l'}(V*W).
\end{equation}

 We can reduce \eqref{eq:matchcc} from an infinite spectral sum equality to an equality of cusp forms of the same weight or eigenvalue parameter. This is done in this section using the fact that $(L)$ holds for a large class of test functions $V,W$ and their associated transforms $h(V,t),h(W,t).$ This reduces the problem to a "finite dimensional" matching problem. The argument we use is summarized in 2 propositions in the appendix of \cite{Venk1}.

\begin{remark}We mean "finite dimensional" in the sense that for a given even positive integer $k,$ there are finitely many cusp forms of weight $k.$ Likewise, we expect the space of Maass forms of eigenvalue $1/4 +t_j^2$ to be one dimensional.  \end{remark}

\begin{prop}\label{venkm1}
Let $t_j$ be a discrete subset of $\mathbb{R}$ with $\{j: t_j \leq T\} \ll T^{r}$ for some r. Let, for each j, there be given a function $c_{X}(t_j)$ depending on $X$, so that $c_{X}(t_j) \ll t_{j}^{r'}$ for some $r'$- the implicit constant independent of $X$; similarly, for each $k$ odd, let there be given a function $c_{X}(k)$ depending on $X$ so that $c_{X}(k) \ll k^{r'}.$ Suppose that one has an equality 
\begin{equation}
\lim_{X \to \infty} \big(\sum_{j} c_{X}(t_j)h(V,t_j)+\sum_{k \text{ odd}} c_{X}(k)h(V,k)\big)=0
\end{equation}
for all $(h(V,t_j),h(V,k))$ that correspond via Sears-titchmarsh inversion to $V$. Then $\lim_{X \to \infty} c_{X}(t_j)$ exists for each $t_j$ and equals 0, and similarly the same holds for $\lim_{X \to \infty}c_{X}(k).$ This equality holds for all functions $h$ for which both sides converge.
\end{prop}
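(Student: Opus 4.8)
The plan is to prove both conclusions at once by a soft compactness argument that reduces the statement to a linear independence property of the Bessel functions $B_{2it}$ and $J_{k-1}$, thereby sidestepping the fact that one cannot isolate a single spectral parameter with a transform of compactly supported $V$ (such transforms are analytic in $t$). Write $\ell_X(V) := \sum_j c_X(t_j)h(V,t_j) + \sum_k c_X(k)h(V,k)$, so the hypothesis reads $\lim_{X\to\infty}\ell_X(V)=0$ for every admissible $V$. First I would record a uniform-convergence estimate: for $V\in C_0^\infty(\R^+)$ the transforms $h(V,t)$ and $h(V,k)$ decay faster than any polynomial in $t$ and in $k$ (integration by parts against the Bessel equation on the continuous side, and the super-exponential smallness of $J_{k-1}(x)$ for $k\gg x$ on the discrete side). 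Combined with the counting bound $\#\{j:t_j\le T\}\ll T^r$ and the $X$-uniform growth bounds $c_X(t_j)\ll t_j^{r'}$, $c_X(k)\ll k^{r'}$, this shows that the series defining $\ell_X(V)$ converges absolutely and is dominated by an $X$-independent summable majorant $\sum_j t_j^{r'}|h(V,t_j)| + \sum_k k^{r'}|h(V,k)| < \infty$.

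Next I would extract subsequential limits of the coefficients. Fix an arbitrary sequence $X_m\to\infty$. For each fixed $j$ the numbers $c_{X_m}(t_j)$ are bounded by $t_j^{r'}$, and likewise for each $k$; so a diagonal argument yields a subsequence (not relabeled) along which $c_{X_m}(t_j)\to c(t_j)$ and $c_{X_m}(k)\to c(k)$ for every $j$ and $k$, with the limit coefficients inheriting the same polynomial bounds. Using the $X$-independent majorant from the first step, dominated convergence lets me pass the limit inside the sum, so that $\sum_j c(t_j)h(V,t_j) + \sum_k c(k)h(V,k) = 0$ for all admissible $V$.

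I would then reduce this to linear independence. Since $h(V,t_j)=\int_0^\infty V(x)B_{2it_j}(x)\frac{dx}{x}$ and $h(V,k)=i^k\int_0^\infty V(x)J_{k-1}(x)\frac{dx}{x}$, the vanishing identity says that $\Phi(x):=\sum_j c(t_j)B_{2it_j}(x) + \sum_k c(k)i^k J_{k-1}(x)$ is orthogonal, with respect to $\frac{dx}{x}$, to every admissible test function; by the density of the test class (the richness underlying Sears--Titchmarsh inversion) this forces $\Phi\equiv 0$. Comparing the $x\to 0^+$ asymptotics then separates the contributions: the continuous terms produce oscillatory powers $x^{\pm 2it_j}$ while the discrete terms produce $x^{k-1}$, and these exponents are pairwise distinct for distinct parameters, so matching coefficients forces every $c(t_j)=0$ and every $c(k)=0$. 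Hence along the chosen subsequence $c_{X_m}(t_j)\to 0$; as $X_m$ was arbitrary, every subsequence of $\bigl(c_X(t_j)\bigr)_X$ has a further subsequence tending to $0$, which gives $\lim_{X\to\infty}c_X(t_j)=0$, and identically $\lim_{X\to\infty}c_X(k)=0$.

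The main obstacle I expect is the linear independence step when both the continuous family $\{B_{2it_j}\}$ and the discrete family $\{J_{k-1}\}$ are present simultaneously: one must justify interchanging the $x\to 0^+$ asymptotic expansion with the infinite sums (controlling the tails via the polynomial growth of $c(\cdot)$ against the decay of the Bessel functions) and then cleanly disentangle the oscillatory $x^{\pm 2it_j}$ terms from the power-type $x^{k-1}$ terms. A secondary point requiring care is the density claim that $\int_0^\infty V\Phi\,\frac{dx}{x}=0$ for all admissible $V$ implies $\Phi\equiv 0$, which is exactly where the completeness of Sears--Titchmarsh inversion enters.
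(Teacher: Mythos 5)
Your first two steps are sound: the $X$-independent summable majorant, the diagonal extraction of limit coefficients $c(t_j)$, $c(k)$, and the passage of the limit through the sum by dominated convergence are all justified by the stated growth and counting hypotheses. The argument breaks at the linear-independence step, where you form $\Phi(x)=\sum_j c(t_j)B_{2it_j}(x)+\sum_k c(k)i^kJ_{k-1}(x)$ and interchange the spectral sums with the integral against $V$. For fixed $x>0$ and large real $t$ one has $|J_{\pm 2it}(x)|\asymp e^{\pi |t|}/\sqrt{|t|}$ while $|2\sin(\pi i t)|=2\sinh(\pi|t|)\asymp e^{\pi|t|}$, so $B_{2it}(x)$ decays only like $|t|^{-1/2}$ in the spectral parameter. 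The holomorphic terms are harmless ($J_{k-1}(x)$ is super-exponentially small in $k$ for fixed $x$), but the continuous-family sum $\sum_j c(t_j)B_{2it_j}(x)$, with $|c(t_j)|\ll t_j^{r'}$ and polynomially many parameters $t_j\le T$ (for Maass forms Weyl's law gives $\asymp T^2$ of them), need not converge, let alone absolutely. Hence $\Phi$ is not a well-defined function, the density claim that $\int_0^\infty V\Phi\,\frac{dx}{x}=0$ for all $V$ forces $\Phi\equiv 0$ cannot even be formulated, and the matching of $x\to 0^{+}$ asymptotics --- already problematic because infinitely many oscillatory exponents $x^{\pm 2it_j}$ pile up --- never gets started. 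What you flag as ``the main obstacle'' is therefore not a technical loose end but the point at which the proof collapses.

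For comparison: the paper does not prove this proposition at all; it is quoted, together with its companion Proposition \ref{venkm2}, from the appendix of Venkatesh's thesis \cite{Venk1}, and the intended proof runs through Proposition \ref{venkm2} rather than through exact linear independence. Fix $j_0$ and $\epsilon>0$, and take $V$ as in Proposition \ref{venkm2} with $N>r+r'+1$, so that $h(V,t_{j_0})=1$, $h(V,t_{j'})\ll \epsilon(1+|t_{j'}|)^{-N}$ for $j'\neq j_0$, and $h(V,k)\ll \epsilon k^{-N}$. Writing $\ell_X(V)$ for the quantity in the hypothesis, the off-diagonal contribution is at most a constant times $\epsilon$ uniformly in $X$ (by the counting bound and the polynomial bounds on $c_X$), so $|c_X(t_{j_0})|\le |\ell_X(V)|+C\epsilon$, whence $\limsup_{X\to\infty}|c_X(t_{j_0})|\le C\epsilon$; letting $\epsilon\to 0$ gives the claim, and the weight-$k$ case is identical. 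This replaces the exact isolation that you correctly observe is impossible (transforms of compactly supported $V$ are analytic in $t$) by approximate isolation, which suffices, and it avoids ever assembling the divergent kernel $\Phi$; it also renders your compactness/diagonalization step unnecessary, since the estimate applies to the $c_X$ directly, uniformly in $X$. Any repair of your argument will in effect have to import Proposition \ref{venkm2}, at which point the compactness scaffolding can be discarded.
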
  

\begin{prop}\label{venkm2}
Given $j_0 \in \mathbb{N}, \epsilon > 0$ and an integer $N >0,$ there is a $V$ of compact support so that $h(V,t_j)=1,$ and for all $j' \neq j_0, h(V,t_{j'}) \ll \epsilon(1+|t_{j'}|)^{-N},$ and for all $k$ odd, $h(V,k) \ll \epsilon k^{-N}.$ 

Given $k_0, \epsilon >0$ and an integer $N>0,$ there is a $V$ of compact support so that $h(v,k_0)=1, h(V,k) \ll \epsilon k ^{-N}$ for $k$ odd $k\neq k_0,$ and $h(V,t) \ll (1+|t|)^{-N}$ for all $\mathbb{R}.$ 
\end{prop}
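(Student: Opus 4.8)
The plan is to realize an approximate spectral point mass as the image of a compactly supported test function under the Sears--Titchmarsh transform. First I would run the inversion formula of Section~\ref{sears} in reverse: since $V\mapsto\big(h(V,t),\{h(V,k)\}\big)$ is inverted by that formula, prescribing the spectral data and reconstructing $V$ is equivalent to prescribing $V$ itself. It is convenient to unify the two families of parameters. A Maass form contributes the real parameter $\tau=t_j$ through the kernel $B_{2i\tau}(x)$, while a holomorphic form of weight $k$ contributes the imaginary parameter $\tau=i(k-1)/2$, at which the kernel degenerates to a multiple of $J_{k-1}(x)$ (visible from the defining relation $B_{2it}(x)=(2\sin(\pi i t))^{-1}(J_{-2it}(x)-J_{2it}(x))$ at $2it=-(k-1)$). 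Thus for $V\in C_0^\infty(\R^+)$ the data $h(V,t_j)$ and $h(V,k)$ are, up to fixed normalizing constants, the values of a single even function $m(\tau):=h(V,\tau)$ at the real points $t_j$ and the imaginary points $i(k-1)/2$. The proposition then amounts to producing an admissible $m$ that equals $1$ at one target point $\tau_0$ and is $\ll\epsilon(1+|\tau|)^{-N}$ at every other spectral point.

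The key analytic input is a Paley--Wiener dictionary for this transform: after the substitution $x=e^{u}$ the kernel $B_{2i\tau}(x)$ oscillates like $e^{\pm 2i\tau u}$ for large $|\tau|$, so compact support of $V$ in $(0,\infty)$ (equivalently bounded support in $u$) corresponds to $m$ extending to an even entire function of exponential type that is Schwartz on $\R$. Granting this, I would take
\[
m(\tau)=\left(\frac{\sin\big(\delta(\tau-\tau_0)\big)}{\delta(\tau-\tau_0)}\cdot\frac{\sin\big(\delta(\tau+\tau_0)\big)}{\delta(\tau+\tau_0)}\right)^{M},
\]
normalized so that $m(\tau_0)=1$. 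This function is even, entire of exponential type $2M\delta$ (hence its inverse transform is compactly supported), and decays like $|\tau|^{-2M}$ on $\R$, so taking $M\ge N$ gives the required decay. Because the spectral points are discrete with $\#\{j:t_j\le T\}\ll T^{r}$, the nearest point to $\tau_0$ sits at a definite distance; choosing $\delta$ and $M$ large forces $m$ to be as small as we please at every other $t_{j'}$ and at every $i(k-1)/2$, which yields the $\ll\epsilon$ bounds. Localizing at a holomorphic weight is the same construction with $\tau_0=i(k_0-1)/2$; here the real-axis values $h(V,t)=m(t)$ are automatically $O((1+|t|)^{-N})$, with an implied constant depending on the fixed imaginary shift, giving the second assertion.

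The step I expect to be the main obstacle is the Paley--Wiener correspondence itself: justifying that the continuous data $h(V,t)$ and the discrete data $h(V,k)$ for a single compactly supported $V$ are the restrictions of one entire function, and that prescribing $m$ of exponential type returns a $V$ genuinely supported away from $0$ and $\infty$. The degeneration of $B_{2i\tau}$ at the imaginary points and the slow ($x^{-1/2}$) decay of a single $J_{k-1}$ make the holomorphic localization the delicate case. If a clean Paley--Wiener theorem is unavailable I would instead argue by truncation: build a Schwartz (not yet compactly supported) function $V_0$ whose transform is an exact point mass of the desired shape, multiply by a smooth cutoff $\chi_A$ supported in $[A^{-1},A]$, and bound $h\big((\chi_A-1)V_0,\tau\big)$ using the decay of $V_0$ together with the small-$x$ and large-$x$ asymptotics of the Bessel kernels, showing this error tends to $0$ uniformly in $\tau$ (with the prescribed polynomial weight) as $A\to\infty$. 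A final renormalization by the value at $\tau_0$ makes $h(V,\tau_0)=1$ exactly. This reproduces the two propositions of the appendix of \cite{Venk1}.
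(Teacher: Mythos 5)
The first thing to note is that the paper itself offers no proof of this statement: it is quoted essentially verbatim from the appendix of \cite{Venk1}, so your attempt can only be judged on its own merits and against Venkatesh's argument. Your opening observation is correct and genuinely important: for compactly supported $V$ the function $t\mapsto h(V,t)$ is entire, and at $t=i(k-1)/2$ the kernel $B_{2it}$ degenerates to a constant multiple of $J_{k-1}$, so the discrete data $h(V,k)$ are forced to be the values of this single entire function at points marching up the imaginary axis. Unfortunately, this very rigidity destroys your main construction. At $\tau=i(k-1)/2$ each factor of your multiplier satisfies $|\sin(\delta(\tau\pm\tau_0))|\asymp e^{\delta(k-1)/2}$, so after normalization $|m(i(k-1)/2)|$ grows like $e^{M\delta(k-1)}$ up to polynomial factors: the required bound $h(V,k)\ll\epsilon k^{-N}$ fails catastrophically, and the analogous computation with $\tau_0=i(k_0-1)/2$ breaks the second assertion for $k>k_0$. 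Nor can this be repaired by tuning $\delta$ and $M$: by Phragm\'en--Lindel\"of, a nonzero entire function of exponential type that decays polynomially on $\mathbb{R}$ must grow exponentially along the imaginary axis, so \emph{no} function in your proposed Paley--Wiener class can be simultaneously small at all the Maass points and all the holomorphic points. The same tension shows that the dictionary you assume (compact support $\leftrightarrow$ even, exponential type, Schwartz on $\mathbb{R}$), which you honestly flag as the main obstacle, cannot be the right description of the image: the actual image has extra structure at the points $i(k-1)/2$, governed by the Gamma-factor asymptotics of $J_\nu$ in its order, and it is precisely this structure that a correct proof must exploit.

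Your fallback --- prescribing the continuous and discrete spectral data independently, inverting by Sears--Titchmarsh, truncating, and estimating the error --- is the right general shape of argument, exactly because it refuses to tie the two families of data into a single exponential-type function. But as written it is only a sketch with the hardest steps missing. For the holomorphic localization the inverse transform of the data $(0,\{\delta_{k,k_0}\})$ is $V_0(x)=2(k_0-1)J_{k_0-1}(x)$, which decays only like $x^{-1/2}$, and you need the truncation error to satisfy $h\bigl((\chi_A-1)V_0,t\bigr)\ll(1+|t|)^{-N}$ uniformly in $t$; this requires controlling the oscillatory integral $\int_A^\infty J_{k_0-1}(x)B_{2it}(x)\frac{dx}{x}$ in the critical range $x\asymp|t|$, which you do not attempt. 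The Maass localization needs the same analysis at the discrete points, with the weight $k^{-N}$, where the large-order behavior of $J_{k-1}$ must be used. Until those estimates are carried out, the proposal has a genuine gap in both of its branches.
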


Using Propositions \ref{venkm1} and \ref{venkm2}, we can choose our test functions $V,W$ such that their associated Bessel transforms are  supported on weights $k$ or eigenvalue parameters $t_j.$ Upon expanding the right hand side of  \eqref{eq:matchcc}, \begin{equation}
\frac{12}{\pi}\sum_{\phi }h(V,t_{\phi})h(W,t_{\phi})a_{l}(\phi)a_{l'}(\phi)
\end{equation} one sees 
 that only choosing both the test functions to be supported on the same weight or eigenvalue will have an associated non-zero contribution. Certainly this agrees with Rankin-Selberg theory. Choose now $V,W$ to be supported on an eigenvalue parameter $t_j,$ say, as in Proposition \ref{venkm2}. Then \eqref{eq:matchcc} reduces to 

\begin{equation}\label{eq:finite}
\lim_{X \to \infty} \frac{1}{X} \sum_{n \in \Z}
g(n/X)  \big( \sum_{\substack{\phi_t \\ t=t_j} }a_{n}(\phi)\overline{a_{l}(\phi)}\big)\big(\sum_{\substack{\psi_t \\ t=t_j} }a_{n}(\psi)\overline{a_{l'}(\psi)}\big) =
\end{equation}
 
 $$=\frac{12}{\pi}\sum_{\substack{\phi_t \\ t=t_j} }a_{l}(\phi)\overline{a_{l'}(\phi)}.
$$
   Here as in Proposition \ref{venkm2}, we choose the transforms such that $h(V,t)=1$ for $t=t_j.$
   
  We would like to interchange the limit and the spectral sum, but this requires knowing that the limit  $$\lim_{X \to \infty} \frac{1}{X} \sum_n g(n/X) a_n(\phi)a_n(\psi)$$ exists. If we assume Rankin-Selberg orthogonality then we certainly get this. However the point of the beyond endoscopy approach is to not make such assumptions. 
  
  What one needs to interchange the limit and spectral sum is to build in Hecke operators into our trace formula. This and the analytic continuation of the Rankin-Selberg L-function we show in a following paper.

   \end{document}